\newtheorem{thm}{Theorem}[section]
\numberwithin{equation}{section}
\newtheorem{prop}{Proposition}[section]
\newtheorem{lem}[thm]{Lemma}
\newtheorem{conj}[thm]{Conjecture}
\newtheorem{cor}[thm]{Corollary}
\newcommand{\bdot}{\boldsymbol{\cdot}}
\newcommand{\no}{\noindent}
\title{Combinatorial invariants for certain classes of non-abelian groups}
\author{Naveen K. Godara\footnote{Department of Mathematics, Indian Institute of Science Education and Research Bhopal, India. Email: naveen(dot)iiserb(at)gmail(dot)com}, Renu Joshi \footnote{Institut Computational Mathematics, Technische Universität Braunschweig,
38106 Braunschweig, Germany. Email: rjoshi(dot)iiser(at)gmail(dot)com} and Eshita Mazumdar\footnote{Mathematical and Physical Sciences Division, School of Arts and Sciences, Ahmedabad University, India. Email: eshita(dot)mazumdar(at)ahduni(dot)edu(dot)in}  }
\date{}
\begin{document}
\maketitle
\begin{abstract}
\no This article focuses on the study of zero-sum invariants of finite non-abelian groups. We address two main problems: the first centers on the ordered Davenport constant and the second on Gao's constant. We establish a connection between the ordered Davenport constant and the small Davenport constant for a finite non-abelian group of even order, which in turn gives a relation with the Noether number. Additionally, we confirm a conjecture of Gao and Li for a non-abelian group of order $2p^{\alpha}$, where $p$ is a prime. Furthermore, we prove a conjecture that connects the ordered Davenport constant to the Loewy length for certain classes of finite $2$-groups.
\end{abstract}

\no \textbf{Keywords:}
Gao constant, Davenport constant, Product-one sequence, Loewy length.
 
\smallskip

\no 2020 Mathematics Subject Classification. Primary 11B75; Secondary 11P70.

\section{Introduction}
For given positive integers $m<n$, we denote the set $\{m,m+1,\ldots,n\}$ by $[m,n].$ 
Let $G$ be a finite group, written multiplicatively with the identity element $1$, and $({\mathcal{F}}(G), \bdot)$ denote the free abelian monoid generated by $G.$ By a sequence $S$ over $G$, we mean $S\in{\mathcal{F}}(G).$ Typically, for a sequence $S=g_1\bdot g_2\bdot \dotsc \bdot g_{\ell}$ over $G$, the non-negative integer $|S|=\ell$ denotes the length of the sequence. For two sequences $S = g_1 \bdot \dotsc \bdot g_{\ell_1}$ and $T = h_1 \bdot \dotsc \bdot h_{\ell_2}$, we write $S \bdot T := g_1 \bdot \dotsc \bdot g_{\ell_1} \bdot h_1 \bdot \dotsc \bdot h_{\ell_2}$ in $\mathcal{F}(G)$.
 For any $g \in G$, we define the multiplicity of the element $g$ in $S$ as ${\mathsf{v}}_{g}(S) := \lvert \{ i \in [1,\ell] ~:~ g_i = g \}|. $

A sequence $T$ over $G$ is said to be a {\it subsequence} of $S$, denoted by $T \mid S$, if ${\mathsf{v}}_{g}(T) \leq {\mathsf{v}}_{g}(S)$ for every $g \in G$. We say that two sequences $S_1$ and $S_2$ over $G$ are the same if ${\mathsf{v}}_{g}(S_1)={\mathsf{v}}_{g}(S_2)$ for every $g \in G$. If we denote a sequence $S$ as $g_1^{[n_1]}\bdot g_2^{[n_2]}\bdot \dotsc \bdot g_{k}^{[n_k]}$, this means $g_i$ repeats $n_i$ times in $S.$ In case $T \mid S$ in $\mathcal{F}(G)$, we use the notation $S \bdot T^{[-1]}$ to denote the sequence obtained from $S$ by removing the terms of $T$. For a non-empty set $A\subset G$ and a sequence $S$ over $G$, we denote by $S(A)$ the subsequence of $S$ consisting of the elements of $A$ that appear in $S.$

\noindent For a sequence $S=g_1\bdot g_2\bdot \dotsc \bdot g_{\ell}$ over $G$, we define 
\[
\pi(S) := \bigg\{ \prod_{i=1}^{\ell}g_{\sigma(i)} \in G ~:~ \sigma ~ \in \mathfrak{S}_{\ell} \bigg\},
\] 
where, $\mathfrak{S}_{\ell}$ denotes the symmetric group. 
\[  
\Pi(S) := \bigcup_{ |T| \ge 1,\; T \mid S}  \pi(T).
\]

\no For any $r \in [1,\ell]$, we define
\[  
\Pi_r(S) := \bigcup_{\substack{|T|\ge 1,\; T \mid S,\\ |T|=r}}  \pi(T).
\]

\noindent 
It is quite useful to have related notations for sequences in which the order of terms matters. 
 Let $(\mathcal{F}^{\ast}(G), \bdot)$ denote the free non-abelian monoid with basis $G$ (as a set), whose elements are called ordered sequences over $G$, where we use the same notation "$\bdot$" for convenience. An ordered sequence $T^{\ast}$ over $G$ is said to be an {\it ordered subsequence} of $S^{\ast}=g_1\bdot g_2 \bdot g_3 \bdot \dots \bdot g_{\ell}$ $\in \mathcal{F}^{\ast}(G)$, denoted by $T^{\ast} \mid S^{\ast}$, if $T^{\ast}=g_{i_1}\bdot g_{i_2} \bdot g_{i_3} \bdot \dotsc \bdot g_{i_k}$ for some $1\le i_1<i_2<i_3 < \dots <i_k\le \ell$. 
 For a non-empty set $A\subset G$ and a sequence $S^{\ast}$ over $G$, we denote by $S^{\ast}(A)$ the subsequence of $S^{\ast}$ consisting of the elements of $A$ following the ordering of terms as they appear in $S^{\ast}.$
 For an ordered sequence $S^{\ast}=g_1\bdot g_2 \bdot g_3 \bdot \dots \bdot g_{\ell}$, we define $\pi(S^{\ast}) := \prod_{i=1}^{\ell}g_{i} \in G,
$
and 
\begin{equation*}
\Pi(S^{\ast}) :=\big\{ \pi(T^{\ast}) ~:~  |T^{\ast}|\ge 1,\; T^{\ast} \mid S^{\ast} \big\} \subseteq G.
\end{equation*}
Note that if $S \in \mathcal{F}(G),$ then corresponding ordered sequence of $S$ is $S^{\ast} \in \mathcal{F}^{\ast}(G).$
Now, we define some combinatorial invariants. Let $S$ be a non-trivial sequence over $G$.
\begin{itemize}
    \item The sequence $S$ is said to be a {\it product-one sequence} if $1 \in \pi(S)$.
    \item The sequence $S$ is called a {\it product-one free sequence} if  $1\not\in \Pi(S)$.
    \item The {\it small Davenport constant} denoted by $\mathsf{d}(G)$ is defined as $$\mathsf{d}(G):= \sup\big\{|S| : \exists\; S\in {\mathcal{F}}(G), 1\notin \Pi(S) \big\}.$$
    \item The {\it Gao constant} denoted by $\mathsf{E}(G)$ is defined as 
    $$\mathsf{E}(G):= \min\big\{k\in \mathbb{N} : |S| \ge k \text{ implies } 1\in \Pi_{|G|}(S) \big\}.$$
\end{itemize}
On the other hand, let $S^{\ast}$ be a non-trivial ordered sequence over $G$. 
 \begin{itemize}
     \item The sequence $S^{\ast}$ is said to be a {\it product-one ordered sequence} if $\pi(S^{\ast})=1$. We say $S^{\ast}$ has a product-one ordered subsequence if $1 \in \Pi(S^{\ast})$. 
     \item The sequence $S^{\ast}$ is called a {\it product-one free ordered sequence} if $1\not\in \Pi(S^{\ast})$.
\item The {\it ordered Davenport constant} denoted by $\mathsf{D}_o(G)$ is 
defined as $$\mathsf{D}_o(G)=\min\big\{k \in \mathbb{N} : |S^{\ast}|\ge k \text{ implies } 1\in \Pi(S^{\ast}) \big\}.$$
\end{itemize}

\no Note that in \cite{Dimitrov}, $\mathsf{D}_o(G)$ is referred to as the strong Davenport constant, but in the literature, the strong Davenport constant \cite{chapman} is meant for something else. So, to avoid confusion, we call it the ordered Davenport constant. In the past, $\mathsf{d}(G)$ has been studied extensively, but not much is known about $\mathsf{D}_o(G)$. Determining the precise value of $\mathsf{D}_o(G)$ is a considerably more challenging and difficult problem. Recently, in \cite{NRE}, the authors computed the precise value of $\mathsf{D}_o(G)$ for certain classes of finite non-abelian $p$-groups and several other groups as well.

\no By the definitions of $\mathsf{d}(G)$ and $\mathsf{D}_o(G)$, it is clear that both the invariants are natural extensions of the Davenport constant \cite{Davenport}, and  $\mathsf{d}(G)+1 \le \mathsf{D}_o(G) \le |G|$ for any finite group $G$ (equality holds for any finite cyclic group $G$). For a finite abelian group $G$, both $\mathsf{d}(G)+1$ and $\mathsf{D}_o(G)$ are the same. In 1977, Olson and White \cite{OW} proved that $\mathsf{D}_{o}(G) \le \bigg\lceil \frac{|G|+1}{2} \bigg\rceil$ for any finite non-cyclic group $G$. Since $\mathsf{d}(G) +1 \le \mathsf{D}_o(G)$ for any finite group $G$, we have $\mathsf{d}(G) \le \big\lfloor \frac{|G|}{2}\big\rfloor$. So, to study the ordered Davenport constant, we focus on the relationship between $\mathsf{d}(G)$ and $\mathsf{D}_o(G)$ in detail.

\no Moreover, in the realm of invariant theory, an important constant to consider is the Noether number, which is related to zero-sum invariants. For a finite-dimensional $G$-module $V$ over a field $\mathbb{F}$, where $\text{char}(\mathbb{F}) \nmid |G|$, the ring of invariants is defined as $\mathbb{F}[V]^G:=\{f \in \mathbb{F}[V] : f^g=f ~\forall ~g \in G\}$.
Let $\beta(G, V)$ be the smallest positive integer $d$ such that $\mathbb{F}[V]^G$ is generated by elements of degree $\le d$. The \textit{Noether number} $\beta(G)$ is then defined as: 
$$\beta(G) = \sup{\{\beta(G,V) : V \text{ is a finite-dimensional } G \text{-module over } \mathbb{F}\}}.$$
\no Schmid \cite{Schmid} proved a connection between the small Davenport constant and the Noether number; in particular, $\mathsf{d}(G)+1 = \beta(G)$ for a finite abelian group $G$ (see \cite{CD2014} for more details).

\smallskip

\no In this connection, we have our first result, which is as follows:
\begin{thm}\label{Theoremd(G)}
  Let $A$ be a finite abelian group. For the group $G= A \rtimes_{-1} C_2$, we have 
    $$\mathsf{D}_o(G) = \mathsf{d}(G)+1 = \mathsf{d}(A)+2.$$
    Moreover, $\mathsf{D}_o(G)=\beta (G).$
\end{thm}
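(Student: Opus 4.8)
The plan is to reduce all three equalities to the single statement that \emph{every product-one free ordered sequence over $G$ has length at most $\mathsf{d}(A)+1$}. Granting this, the matching lower bound is immediate: if $T=a_1\bdot\cdots\bdot a_d$ is a product-one free sequence over $A$ of maximal length $d=\mathsf{d}(A)$ and $c$ denotes the generator of the $C_2$-factor, then $S^\ast=a_1\bdot\cdots\bdot a_d\bdot c$ is product-one free in $G$, since any subsequence avoiding $c$ lies in $A$ and is handled by $T$, while any subsequence containing $c$ has product in the nontrivial coset $Ac$ and so cannot equal $1$. This exhibits an ordered (indeed unordered) product-one free sequence of length $\mathsf{d}(A)+1$. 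Because an unordered product-one free sequence is automatically ordered product-one free (the induced-order product set $\Pi(S^\ast)$ is contained in the all-orderings product set $\Pi(S)$), the maximal length of an ordered product-one free sequence is at least $\mathsf{d}(A)+1$ and at least $\mathsf{d}(G)$; combined with the crux this pins that maximal length to exactly $\mathsf{d}(A)+1$, whence $\mathsf{d}(G)=\mathsf{d}(A)+1$ and $\mathsf{D}_o(G)=\mathsf{d}(A)+2=\mathsf{d}(G)+1$, which is the first line.

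The engine for the crux is a normal-form computation in $G=A\rtimes_{-1}C_2$. Writing each term as $a_ic^{e_i}$ with $a_i\in A$ and $e_i\in\{0,1\}$, and pushing every copy of $c$ to the right via $c\,a=a^{-1}c$, one obtains for any ordered subsequence $T^\ast\mid S^\ast$ on an index set $I$ that
\begin{equation*}
\pi(T^\ast)=\Big(\prod_{i\in I} a_i^{(-1)^{r_I(i)}}\Big)\,c^{\,\sum_{i\in I}e_i},
\end{equation*}
where $r_I(i)$ counts the reflection terms of $I$ preceding $i$. Thus a nonempty subsequence is product-one exactly when it contains an even number of reflections \emph{and} the displayed alternating product over $A$ is trivial. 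In particular the rotation terms of $S^\ast$ (those with $e_i=0$) form a product-one free sequence over $A$, so their number $s$ satisfies $s\le\mathsf{d}(A)$; if the number $r$ of reflection terms is at most $1$ we are already done, so all the content lies in the range $r\ge 2$.

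For $r\ge2$ I would induct on $r$, aiming to replace $S^\ast$ by an ordered product-one free sequence over $G$ of the \emph{same length} but with two fewer reflections, after which the base cases $r\le1$ close the argument. Concretely I would take the first two reflections together with the rotation block between them and fold this initial segment into $A$: since $c$-conjugation inverts $A$, the two reflections collapse to an element $b_1b_2^{-1}\in A$ once the intervening rotations are inverted, and the alternating-product identity above dictates exactly which rotations must be inverted to realign the two frames separated by a reflection. The new sequence inherits the same rotation/reflection combinatorics minus one reflection pair, and the normal form shows its even-reflection products form a subfamily of those of $S^\ast$, so product-one freeness should persist. The main obstacle is precisely this frame bookkeeping: the sign attached to a rotation depends on how many reflections a subsequence \emph{actually selects} before it, not on the global pattern, so a naive deletion of two reflections corrupts the products of every subsequence that selects an odd number of the two deleted terms. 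Handling this honestly---inverting the intervening rotation block (an automorphism of $A$, hence preserving product-one freeness of the rotation part) and verifying that no new product-one subsequence is introduced among the subsequences that split the deleted pair---is the delicate point, and it is exactly where the inversion action (reflections squaring into $A$, pairs of reflections collapsing cleanly) is essential.

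Finally, for the Noether number I would invoke the established dictionary between $\beta(G)$ and the Davenport-type constants. Schmid's theorem gives $\beta(A)=\mathsf{d}(A)+1$ for the abelian normal subgroup, and for a group possessing an abelian subgroup of index two under the inversion action the computations of Cziszter and Domokos yield $\beta(G)=\mathsf{d}(G)+1$. Combining this with the first part of the theorem gives $\beta(G)=\mathsf{d}(G)+1=\mathsf{D}_o(G)$, completing the proof. The only genuine import here is the invariant-theoretic identity $\beta(G)=\mathsf{d}(G)+1$ for these groups; everything else reduces to the sequence-theoretic crux above.
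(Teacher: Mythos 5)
Your lower bound and the reduction of the first display to the claim that every product-one free ordered sequence over $G$ has length at most $\mathsf{d}(A)+1$ match the paper, as does the observation that the case of at most one reflection is immediate and the appeal to Cziszter--Domokos for $\beta(G)$. The gap is in the case of $r\ge 2$ reflections, which is the entire content of the theorem, and your inductive scheme as described does not close it. First, the operation you propose --- collapsing the first two reflections $b_1,b_2$ and the rotation block between them into the single element $b_1b_2^{-1}\in A$ together with the inverted block --- shortens the sequence by one term (two reflections become one rotation), contradicting your claim that the new sequence has ``the same length but with two fewer reflections''. Iterating a length-losing reduction only yields $\ell\le\mathsf{d}(A)+\lceil r/2\rceil$, which is weaker than $\ell\le\mathsf{d}(A)+1$ as soon as $r\ge 3$. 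Second, even if the length bookkeeping were repaired, you explicitly leave unproved that product-one freeness survives the reduction; the problematic subsequences are exactly those selecting one but not both of the merged reflections, and nothing in your sketch controls them. As written, the crux is announced but not established.

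The paper avoids induction entirely by an overlapping-pairs device that your proposal is missing. Writing $S^{\ast}(H)=c_1\bdot\dotsc\bdot c_u$ and $S^{\ast}(A)=d_1\bdot\dotsc\bdot d_v$ with $u+v=\mathsf{d}(A)+2$, it forms the sequence $L=(c_1c_2)\bdot(c_2c_3)\bdot\dotsc\bdot(c_{u-1}c_u)\bdot d_1^{\epsilon_1}\bdot\dotsc\bdot d_v^{\epsilon_v}$ over $A$, in which each reflection is reused in two consecutive products, so that $u$ reflections contribute $u-1$ elements of $A$ and $|L|=(u-1)+v=\mathsf{d}(A)+1$ exactly; by the definition of $\mathsf{d}(A)$, the sequence $L$ must contain a product-one subsequence. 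A selected run of consecutive pairs telescopes via $c_i^2=1$ to a product $c_jc_k$ of two distinct reflections occupying disjoint index intervals, and the signs $\epsilon_i$ (dictated by $cd=d^{-1}c$) are chosen so that the resulting terms, read back in the order of $S^{\ast}$, form an ordered product-one subsequence. This one-step conversion of $u$ reflections into $u-1$ rotations is the idea your argument needs; without it, or a correct substitute for your inductive step, the proof is incomplete.
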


\no Our next result is related to the Gao constant. In 1961, Erd\H{o}s, Ginzburg, and Ziv \cite{EGZ1961} proved that ${\mathsf{E}}(G) \leq 2|G| - 1$ for any finite solvable group $G$, which is known as the Erd\H{o}s-Ginzburg-Ziv theorem. Later, in 1976, Olson \cite{OLS1976} extended this result to any finite group $G$. Following their work, significant efforts were made to reduce the upper bound on ${\mathsf{E}}(G)$ (see \cite{YUS1988} and \cite{GAO1996B} for more details). In 1996, Gao established that $\mathsf{E}(G)=\mathsf{d}(G)+|G|$ for any finite abelian group $G$. Later on, Gao and Li \cite{GLI2010}, in 2010, refined the upper bound to ${\mathsf{E}}(G) \leq {\frac {7}{4}} |G| - 1$ and proposed a conjecture:

\bigskip

\begin{conj}\emph{\cite{GLI2010}}\label{GAOLI} For any finite non-cyclic group $G$, we have ${\mathsf{E}}(G) \leq {\frac {3}{2}}|G|$.
\end{conj}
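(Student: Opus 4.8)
The plan is to reduce the conjecture to a sharp Gao-type upper bound and then attack the general case by induction through a normal subgroup. First I would record the universal lower bound $\mathsf{E}(G) \ge \mathsf{d}(G) + |G|$: if $U$ is a product-one free sequence of length $\mathsf{d}(G)$, then $S = U \bdot 1^{[|G|-1]}$ has length $\mathsf{d}(G) + |G| - 1$ and no length-$|G|$ product-one subsequence, since any such subsequence must use at least one term of $U$ while the identities do not affect the product. This places the conjecture exactly at the level of the matching upper bound $\mathsf{E}(G) \le \mathsf{d}(G) + |G|$: for non-cyclic $G$ the Olson--White bound $\mathsf{D}_o(G) \le \lceil (|G|+1)/2\rceil$ together with $\mathsf{d}(G) + 1 \le \mathsf{D}_o(G)$ already gives $\mathsf{d}(G) \le \lfloor |G|/2 \rfloor$, hence $\mathsf{d}(G) + |G| \le \frac{3}{2}|G|$. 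So the whole problem is to prove $\mathsf{E}(G) \le \mathsf{d}(G) + |G|$, or the weaker $\mathsf{E}(G) \le \frac{3}{2}|G|$ directly, for every non-cyclic $G$.

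For abelian non-cyclic $G$ this is immediate from Gao's identity $\mathsf{E}(G) = \mathsf{d}(G) + |G|$, so the content lies entirely in the non-abelian case, which I would attack by induction on $|G|$. Fix a nontrivial proper normal subgroup $N \trianglelefteq G$ — available whenever $G$ is solvable, with simple groups requiring separate input — write $\bar G = G/N$ with canonical epimorphism $\varphi\colon G \to \bar G$, and set $q = |G|/|N| = |\bar G|$. Given $S$ over $G$ with $|S| \ge \frac{3}{2}|G|$, pass to $\bar S = \varphi(S)$. The engine is the classical extract-and-combine scheme: as long as the unused part of $\bar S$ has length at least $\mathsf{E}(\bar G)$, extract a length-$q$ subsequence $\bar T \mid \bar S$ with $\bar 1 \in \pi(\bar T)$; the preimage subsequence $T \mid S$ then admits an ordering whose product lies in $N$. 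Extracting disjoint blocks $T_1, T_2, \dots$ produces elements $n_1, n_2, \dots \in N$, and any sub-collection of the $n_i$ with product one in $N$ lifts, by concatenating the corresponding blocks, to a product-one subsequence of $S$.

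To hit the exact length $|G|$ demanded by $\mathsf{E}$, I would calibrate the scheme so that each extracted block has length exactly $q$ and exactly $|N|$ blocks are combined (applying $\mathsf{E}(N)$ to $n_1 \bdot n_2 \bdot \cdots$), giving total length $|N|\cdot q = |G|$. Tracking the terms of $S$ consumed yields a crude recursion of the shape $\mathsf{E}(G) \le (\mathsf{E}(N) - 1)\,q + \mathsf{E}(\bar G)$; inserting the inductive bounds $\mathsf{E}(N) \le \frac{3}{2}|N|$ and $\mathsf{E}(\bar G) \le \frac{3}{2}q$ this overshoots $\frac{3}{2}|G|$ by a lower-order term of size $\frac{1}{2}q$, so a genuine part of the work is to sharpen the recursion — by recycling the block remainders, or by substituting the exact value $\mathsf{d}(\cdot)+|\cdot|$ on whichever of $N, \bar G$ is abelian — until the constant $\frac{3}{2}$ is recovered. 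The regime $\bar G \cong C_2$, where $N$ carries almost all of $G$, is exactly the tractable one exploited for $|G| = 2p^{\alpha}$.

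The main obstacle is twofold, and is precisely what keeps the full conjecture open. First, in the non-abelian setting the product of a block $T$ is not a single element of $N$ but the set $\pi(T)$, and the product of a concatenation $T_{i_1} \bdot \cdots \bdot T_{i_{|N|}}$ is \emph{not} the ordered product $n_{i_1}\cdots n_{i_{|N|}}$ of the chosen block-values, so the implication ``product-one over $N$ yields product-one over $G$'' must be controlled through commutator corrections inside $N$; making this rigorous without inflating the constant is the crux. Second, the induction can produce a cyclic quotient $\bar G$, for which $\mathsf{E}(\bar G) = 2|\bar G| - 1$ is far too large to feed the recursion, so one must either choose $N$ to avoid cyclic quotients or treat that case by a direct Erd\H{o}s--Ginzburg--Ziv argument that spends the slack between the known $\frac{7}{4}$ and the target $\frac{3}{2}$. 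Handling both points uniformly over all non-cyclic $G$ is the reason the statement remains a conjecture; the present paper succeeds only where the structure $2p^{\alpha}$ makes $N$ abelian and $\bar G \cong C_2$, neutralizing both difficulties.
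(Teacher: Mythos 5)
The statement you were asked to prove is an open conjecture: the paper does not prove it, and neither do you. What the paper actually establishes is only the special case of Theorem \ref{TheoremE(G)}, namely $G=A\rtimes_{-1}C_2$ with $A$ a finite abelian $p$-group. Your proposal is candid about this, and the framing material is sound: the lower bound $\mathsf{E}(G)\ge \mathsf{d}(G)+|G|$ via $U\bdot 1^{[|G|-1]}$ is correct, and the observation that the Olson--White bound gives $\mathsf{d}(G)\le\lfloor|G|/2\rfloor$, so that the (also conjectural) identity $\mathsf{E}(G)=\mathsf{d}(G)+|G|$ would imply the Gao--Li bound for non-cyclic $G$, is a correct and standard reduction. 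But a reduction of one open conjecture to another, followed by a recursion that you yourself show overshoots the target, is not a proof, and the two obstructions you name (non-abelian block products requiring commutator corrections, and cyclic quotients with $\mathsf{E}(\bar G)=2|\bar G|-1$) are genuine and unresolved.

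The one concrete inaccuracy worth flagging is your final claim that the regime $\bar G\cong C_2$ is ``exactly the tractable one exploited for $|G|=2p^{\alpha}$'' by your scheme. Run your own recursion there: with $N=A$ and $q=2$ it gives $\mathsf{E}(G)\le 2\mathsf{E}(A)+1\le 4|A|-1\approx 2|G|$, far above $\tfrac32|G|=3|A|$, so the block-extraction argument fails even in the case the paper settles. The paper's proof of Theorem \ref{TheoremE(G)} is not a quotient recursion at all: it takes a sequence of length $3p^{\alpha}$, splits it as $S(A)$ and $S(H)$ with $H=G\setminus A$, and exploits that every element of $H$ is an involution, so that products $c_ic_j$ of distinct terms of $S(H)$ land in $A$ and repeated terms of $S(H)$ pair to the identity. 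The length-$2p^{\alpha}$ product-one subsequence is then assembled by a case analysis using Olson's value of $\mathsf{E}(A)$, Lemma \ref{pi_n-2} on $|\Pi_{n-2}|$, Lemma \ref{disjointsequences} on disjoint subsequences with equal products, and the Cauchy--Davenport-type Lemma \ref{Nathanson}. If you want to salvage your write-up as a proof of the special case, you would need to replace the recursion by an argument of this kind; as a proof of the full conjecture, the gap is the conjecture itself.
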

\noindent In 2015, Han \cite{HanD} confirmed that Conjecture \ref{GAOLI} holds for all finite non-cyclic nilpotent groups (see \cite{NKG}, \cite{ABR2023}, \cite{DHA2019}, and \cite{YYU2023} for recent progress on the above conjectures). Recently, in 2021, Gao, Li, and Qu \cite{GaoLiQu} modified the bound in Conjecture \ref{GAOLI} and proved that ${\mathsf{E}}(G) \leq {\frac {3}{2}}(|G| - 1)$ for a finite non-cyclic group $G$ of odd order $|G| > 9$.

\no In support of Conjecture \ref{GAOLI}, we prove the following result for $\mathsf{E}(G)$:

\begin{thm}\label{TheoremE(G)} Let $p$ be a prime and $A$ be a finite abelian $p$-group. For the group $G= A \rtimes_{-1} C_2$, we have 
    $$\mathsf{E}(G) \le \frac{3}{2}|G|.$$
\end{thm}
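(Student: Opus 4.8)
The plan is to reduce everything to the abelian subgroup $A = A\times\{0\}$, to invoke Gao's identity $\mathsf{E}(A)=\mathsf{d}(A)+|A|$, and to exploit the slack in $\mathsf{d}(A)\le |A|-1$. Write $n=|A|$, so $|G|=2n$. Since any sequence of length $>3n$ contains one of length exactly $3n$ whose product-one subsequence of length $2n$ serves the longer sequence as well, it suffices to treat $|S|=3n$. Decompose $S=S_0\bdot S_1$, where $S_0$ collects the $t$ rotations (elements of $A\times\{0\}$) and $S_1$ the $r$ reflections (elements $(a,1)$, which are involutions), so $t+r=3n$. Writing $A$ additively, the ordered product of terms $(a_i,\epsilon_i)$ equals $\big(\sum_i (-1)^{\epsilon_1+\cdots+\epsilon_{i-1}}a_i,\ \sum_i\epsilon_i\big)$; hence a product-one subsequence uses an even number $2k$ of reflections, and then two freedoms arise: (i) the $2k$ chosen reflection-values split into balanced halves $P,N$ contributing $\sum_P d-\sum_N d$, and (ii) once $k\ge 1$, each chosen rotation may be assigned sign $\pm1$ independently (placing it before the first, or between the first and second, reflection). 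The task becomes: select rotation-values and $2k$ reflection-values of total count $2n$ whose signed sum can be made $0$ in $A$.

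First I would treat the rotation-heavy regime $r\le n-\mathsf{d}(A)$, where $t=3n-r\ge 2n+\mathsf{d}(A)=\mathsf{E}(A)+n$. Applying Gao's theorem to $S_0$ yields a zero-sum rotation subsequence of length exactly $n$; deleting it leaves at least $\mathsf{E}(A)$ rotations, so a second disjoint length-$n$ zero-sum rotation subsequence exists, and their concatenation is product-one of length $2n$. This is exactly the step in which $\mathsf{d}(A)\le |A|-1$ forces $2n+\mathsf{d}(A)\le 3n-1<3n$, so that length $3n$ already suffices.

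In the complementary reflection-heavy regime $r\ge n-\mathsf{d}(A)+1$ (in particular $r\ge 2$), the plan is to realize the length-$2n$ block as one length-$n$ zero-sum rotation block (extracted by Gao's theorem when $t\ge\mathsf{E}(A)$) together with a second length-$n$ product-one block assembled from the sign-flexible machinery above: a balanced family of $2k$ reflections with $n-2k$ freely signed rotations solving $\sum_P d-\sum_N d+\sum\pm c=0$. To hit length $n$ and sum $0$ simultaneously, I would pad with units that are individually product-one — equal reflection pairs $(d,1)\bdot(d,1)$, sign-opposed equal rotation pairs, and identity rotations for parity — whose supply is secured by pigeonhole, since among $r$ reflection-values drawn from a set of size $n$ there are at least $(r-n)/2$ equal pairs. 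When $t<\mathsf{E}(A)$ is too small to furnish even one rotation block, the construction falls back entirely on reflections in balanced or equal pairs supplemented by identity rotations.

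The main obstacle is precisely this last regime: enforcing length exactly $2n$ and product equal to $1$ at once, while matching parities, when $A$ has large Davenport constant and reflections are scarce. The extreme case is $A$ cyclic, i.e. $G=D_n$, where $\mathsf{d}(A)=n-1$ and, combining the elementary lower bound $\mathsf{E}(G)\ge \mathsf{d}(G)+|G|$ with Theorem \ref{Theoremd(G)}, one gets $\mathsf{E}(G)\ge \mathsf{d}(A)+2n+1=3n$, so the bound is attained and the padding and sign analysis must be carried out with no room to spare. Away from this case $\mathsf{d}(A)<|A|-1$ is strict, leaving slack, and in all cases the estimate $\mathsf{E}(G)\le\tfrac32|G|$ follows.
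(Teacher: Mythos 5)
Your reduction to the subgroup $A$ and the rotation-heavy case ($t\ge \mathsf{E}(A)+n$, two disjoint zero-sum blocks of length $n$ via Gao's theorem) are sound and match the spirit of the paper's first step. But the complementary regime is not a proof: it is a plan whose central difficulty you yourself flag and do not resolve, and the proposed tools fail exactly where they are needed. Your supply of ``units'' rests on the pigeonhole count of at least $(r-n)/2$ equal reflection pairs, which is vacuous whenever $r\le n$; yet the hardest instances (e.g.\ $A$ cyclic, $r=2$, the two reflection values distinct, no identity rotations present) live precisely there, and then you have no equal reflection pairs, no guaranteed identity rotations, and too few rotations ($t=3n-2<\mathsf{E}(A)+n$) to run Gao's theorem twice. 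The signed-sum equation $\sum_P d-\sum_N d+\sum\pm c=0$ with prescribed total length $2n$ is the whole problem in that case, and nothing in the proposal solves it.

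For comparison, the paper splits on whether some reflection value repeats. When all reflections are distinct, it studies the set $\Pi_2(c_1\bdot\dotsc\bdot c_u)$ of pairwise products of reflections, shows it has either exactly $u-1$ or at least $u$ elements, and in the first case derives congruences $uk_{ms}\equiv uk_{ml}\ (\mathrm{mod}\ p^{\alpha_m})$ forcing $u$ to be a power of $p$ --- this is where the hypothesis that $A$ is a $p$-group is essential --- after which Lemmas \ref{n,2n}, \ref{pi_n-2} and \ref{Nathanson} finish the count; when a reflection value $a_1$ repeats, it uses the conjugation identity $Ba_1Ba_1=1$ together with the disjoint-equal-product lemma (Lemma \ref{disjointsequences}) to assemble a block of length exactly $2p^{\alpha}$. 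Your argument never uses that $A$ is a $p$-group, and since the statement for general abelian $A$ is exactly the open Gao--Li conjecture for these groups (tight for $G=D_n$, as you note), an argument that does not invoke the $p$-group structure cannot be complete. The gap is therefore the entire reflection-involving case.
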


\no In the final result of this article, we establish a connection between the ordered Davenport constant and the Loewy length. Let $p$ be a prime number. 

The nilpotency index of the Jacobson radical $J$ of $\mathbb{F}_p[G]$ is known as the {\it Loewy length} of $\mathbb{F}_p[G]$ and we denote it by $\mathsf{L}(G)$. For any finite $p$-group $G$, the Loewy length $\mathsf{L}(G)$ is bounded above by $|G|$. In 2004, Dimitrov explored $\mathsf{D}_o(G)$ for a finite $p$-group $G$ and proposed a conjecture relating $\mathsf{D}_o(G)$ and $\mathsf{L}(G)$:
\begin{conj}\emph{\cite{Dimitrov}}\label{Dimitrovconj}
    For a prime $p$ and a finite $p$-group $G$, we have $\mathsf{D}_o(G)= \mathsf{L}(G).$
\end{conj}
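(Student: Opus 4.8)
The plan is to prove the equality by establishing the two inequalities $\mathsf{D}_o(G)\le \mathsf{L}(G)$ and $\mathsf{D}_o(G)\ge \mathsf{L}(G)$ separately, working inside the modular group algebra $\mathbb{F}_p[G]$ and exploiting the dictionary between ordered subsequence products and powers of the augmentation ideal $J=J(\mathbb{F}_p[G])$. The basic link is the expansion, valid for any ordered sequence $S^{\ast}=g_1\bdot \dots \bdot g_{\ell}$,
\[
\prod_{i=1}^{\ell}(g_i-1)=\sum_{T^{\ast}\mid S^{\ast}}(-1)^{\ell-|T^{\ast}|}\,\pi(T^{\ast})\in \mathbb{F}_p[G],
\]
where the sum runs over all ordered subsequences $T^{\ast}\mid S^{\ast}$ (including the empty one, with $\pi=1$) and each $\pi(T^{\ast})$ is read as a basis element of $\mathbb{F}_p[G]$.

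The inequality $\mathsf{D}_o(G)\le \mathsf{L}(G)$ is the routine half and holds for every finite $p$-group. If an ordered sequence $S^{\ast}$ of length $\ell$ is product-one-free, then $\pi(T^{\ast})\ne 1$ for every non-empty $T^{\ast}\mid S^{\ast}$, so in the displayed expansion the coefficient of the identity element of $G$ is fed only by the empty subsequence and equals $(-1)^{\ell}\ne 0$ in $\mathbb{F}_p$. Hence $\prod_{i=1}^{\ell}(g_i-1)\ne 0$, giving $J^{\ell}\ne 0$ and therefore $\ell\le \mathsf{L}(G)-1$. Since the largest length of a product-one-free ordered sequence is $\mathsf{D}_o(G)-1$, this yields $\mathsf{D}_o(G)\le \mathsf{L}(G)$.

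The reverse inequality $\mathsf{D}_o(G)\ge \mathsf{L}(G)$ is the crux: it demands a product-one-free ordered sequence of length exactly $\mathsf{L}(G)-1$. I would first read off $\mathsf{L}(G)$ from Jennings' theorem via the dimension subgroups $G=D_1\supseteq D_2\supseteq\cdots$, namely $\mathsf{L}(G)=1+(p-1)\sum_{n\ge 1}n\,d_n$ with $p^{d_n}=[D_n:D_{n+1}]$, and fix a power-commutator (Lazard) basis adapted to this filtration. Because $\mathbb{F}_p[G]$ is a local Frobenius algebra with one-dimensional socle, the top layer $J^{\mathsf{L}(G)-1}$ is one-dimensional, so the task reduces to choosing $g_1,\dots,g_{\mathsf{L}(G)-1}$ for which $\prod_i(g_i-1)$ is a nonzero multiple of that unique top element while the associated ordered sequence stays product-one-free. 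The natural candidate lists the basis generators with multiplicities governed by their orders, so that repeated factors $(x-1)^k$ climb the filtration without vanishing prematurely, arranged in an order compatible with the filtration; for abelian $G$ this recovers the extremal sequence $\prod_i x_i^{[\,\mathrm{ord}(x_i)-1\,]}$, and the expansion above again forces product-one-freeness because the identity coefficient is supplied solely by the empty subsequence.

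The main obstacle is certifying product-one-freeness once $G$ is genuinely non-abelian. When the selected generators do not commute, an ordered subproduct $\pi(T^{\ast})$ can collapse to $1$ through commutator cancellations that the Jennings filtration degree cannot see, so the clean isolation of the identity coefficient in the expansion fails and the candidate may pick up short product-one subsequences. Pushing the argument through requires the commutator interactions among the chosen generators to be tame enough that the leading filtration term alone decides whether $\pi(T^{\ast})=1$. Such control is available for structured families---in particular for the $2$-groups $A\rtimes_{-1}C_2$ of Theorem~\ref{Theoremd(G)} and their relatives, where the low nilpotency class confines the commutator corrections to a single predictable layer and where $\mathsf{d}(G)$, $\mathsf{D}_o(G)$ and $\mathsf{L}(G)$ can all be computed explicitly---but for an arbitrary finite $p$-group it is exactly this uncontrolled cancellation that blocks the construction. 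I therefore expect the equality in full generality to rest on an additional structural input taming the commutators of a Jennings basis, so that in practice the theorem is proved class by class rather than uniformly.
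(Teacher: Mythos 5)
The statement you were asked to prove is labelled a \emph{conjecture} in the paper, and the paper does not prove it in general; it only verifies it for the specific families $G_1,\dots,G_4$ (under extra hypotheses on the parameters) in Theorem~\ref{TheoremOrderedLoewy}. Your treatment of the inequality $\mathsf{D}_o(G)\le \mathsf{L}(G)$ is correct and complete: the expansion $\prod_{i=1}^{\ell}(g_i-1)=\sum_{T^{\ast}\mid S^{\ast}}(-1)^{\ell-|T^{\ast}|}\pi(T^{\ast})$ together with the observation that product-one-freeness isolates the coefficient $(-1)^{\ell}$ of the identity is exactly Dimitrov's argument, which the paper imports wholesale as Lemma~\ref{Dimitovtheorem} rather than reproving. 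So for that half you are on the same track as the source the paper cites.

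For the reverse inequality there is a genuine gap, and you have diagnosed it accurately yourself: your proposed construction (a Jennings/Lazard basis listed with multiplicities one less than the orders, certified via the one-dimensional socle of the Frobenius algebra $\mathbb{F}_p[G]$) founders precisely on the possibility that an ordered subproduct collapses to $1$ through commutator cancellations invisible to the filtration degree. This is not a fixable technicality in your write-up; it is the open content of the conjecture. Where the paper does establish the lower bound, it proceeds quite differently from your sketch: it first computes $\mathsf{L}(G)$ explicitly via Jennings' formula and the $M$-series (Proposition~\ref{Ind}, Proposition~\ref{power}, Lemmas~\ref{LemmaG_1}--\ref{LemmaG_4}), then exhibits a concrete candidate sequence such as $(a^{-1})^{[2^{\alpha}-1]}\bdot(b^{-1})^{[2^{\beta}-1]}\bdot(a[a,b])^{[2^{\gamma}-1]}\bdot(b[a,b])^{[2^{\gamma}-1]}$ and proves product-one-freeness by reducing any hypothetical product-one ordered subsequence to a system of linear congruences that admits only the trivial solution --- and even this only works for restricted parameter values (e.g.\ $\gamma=1$ for $G_1$, $\sigma=1$ for $G_3$), which is exactly the ``class by class, with the commutator layer tamed'' phenomenon you predicted. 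In short: your first inequality is right, your second is an honest and well-informed failure analysis rather than a proof, and no proof of the general statement exists in the paper.
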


\no This conjecture is true for finite abelian $p$-groups. Recently, in \cite{NRE}, authors proved that this conjecture holds for a large subclass of finite non-abelian $p$-groups when $p$ is an odd prime. Consequently, authors also improved the upper bound on the small Davenport constant given in \cite[Theorem 1.1]{QuLiTeeuwsen2022}. In this paper, we extend our findings to show that Conjecture \ref{Dimitrovconj} holds for a subclass of finite non-abelian $2$-groups. Consider the following $2$-groups (appeared in \cite{BK}):
\begin{enumerate}[label={(\roman*)}]
    \item  $G_1 =  (\langle c \rangle\times \langle a\rangle)\rtimes \langle b\rangle, \text{ where }[a,b]=c,\; [a,c]=[b,c]=1,\;$
    $o(a)=2^{\alpha},\; o(b)=2^{\beta},\\ o(c)=2^{\gamma}, \;
    \alpha, \beta, \gamma \in \mathbb{N} \;\text{with}\; \alpha \geq \beta \geq\gamma \geq 1.$ 

\item $G_2 = \langle a \rangle\rtimes \langle b \rangle, \text{ where } [a,b]=a^{2^{\alpha-\gamma}},\;
    o(a)=2^{\alpha}, \; o(b)=2^{\beta}, o([a,b])=2^{\gamma},$
    $\alpha, \beta,\gamma \in \mathbb{N} \newline \text{ with } \alpha \geq 2\gamma,\; \beta  \geq \gamma \geq 1, \; \alpha+\beta>3.$
    
\item $G_3 =  (\langle c \rangle\times \langle a\rangle)\rtimes \langle b\rangle, \text{ where } [a,b]=a^{2^{\alpha-\gamma}} c,\; 
    [c,b]= a^{-2^{2(\alpha-\gamma)}}c^{-2^{\alpha-\gamma}},$ $o(a)=2^{\alpha},\newline o(b)=2^{\beta},  o(c)=2^{\sigma},\; o([a,b])=2^{\gamma}, \;
\alpha, \beta, \gamma, \sigma \in \mathbb{N} \;\text{with}\; \beta \ge \gamma > \sigma \geq 1,\; \alpha +\sigma \geq 2\gamma.$ 

\item $G_4 =  (\langle c \rangle\times \langle a\rangle)\langle b\rangle, \;\text{where}\; o(a)=o(b)=2^{\gamma+1},\; o([a,b])=2^{\gamma},\; o(c)=2^{\gamma-1},\;[a,b]=a^{2} c,\newline\; [c,b]= a^{-4}c^{-2},\; a^{2^{\gamma}}=b^{2^{\gamma}},\; \gamma \in \mathbb{N}$.
\end{enumerate}
We want to point out that the group $G_4$ is not a semidirect product of the groups $(\langle c \rangle\times \langle a\rangle)$ and $\langle b\rangle$.
 Instead, it is a non-split extension, and the notation used here follows the convention in \cite{BK}.

\begin{thm}\label{TheoremOrderedLoewy} For the above groups, we have $\mathsf{D}_o(G)=\mathsf{L}(G)$ if either
\begin{enumerate}[label=\emph{(\arabic*)}]
\item  $G\cong G_1$ for $\gamma=1$.  
\item $G\cong G_2.$
\item $G\cong G_3$ for $\sigma=1$.
\item $G\cong G_4$ for $\gamma \in \{1,2\}$.
\end{enumerate}
\end{thm}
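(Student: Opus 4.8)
The plan is to prove the two inequalities $\mathsf{D}_o(G) \le \mathsf{L}(G)$ and $\mathsf{D}_o(G) \ge \mathsf{L}(G)$ separately. The first is a uniform fact about every finite $p$-group and costs little; the second is where the real work lies and must be done by an explicit construction tailored to each family $G_1,\dots,G_4$ under its stated parameter restriction.

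For the upper bound I would work in the modular group algebra and use the augmentation ideal $J = J(\mathbb{F}_p[G])$, generated by $\{g-1 : g \in G\setminus\{1\}\}$, whose nilpotency index is $\mathsf{L}(G)$. To an ordered sequence $S^{\ast} = g_1 \bdot \dots \bdot g_{\ell}$ I associate $\Phi(S^{\ast}) := \prod_{i=1}^{\ell}(g_i - 1) \in J^{\ell}$. Expanding the product gives $\Phi(S^{\ast}) = \sum_{T^{\ast} \mid S^{\ast}} (-1)^{\ell - |T^{\ast}|}\pi(T^{\ast})$, the sum ranging over all ordered subsequences, including the empty one. If $S^{\ast}$ is product-one free, that is $1 \notin \Pi(S^{\ast})$, then the only subsequence with $\pi(T^{\ast}) = 1$ is the empty subsequence, so the coefficient of the identity in $\Phi(S^{\ast})$ equals $(-1)^{\ell} \neq 0$ in $\mathbb{F}_p$. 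Hence $\Phi(S^{\ast}) \neq 0$, which forces $J^{\ell} \neq 0$ and therefore $\ell \le \mathsf{L}(G) - 1$. Since a longest product-one free ordered sequence has length $\mathsf{D}_o(G) - 1$, this yields $\mathsf{D}_o(G) \le \mathsf{L}(G)$ for every group in the list, with no case analysis needed.

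For the reverse inequality I would, group by group, first pin down $\mathsf{L}(G)$ and then exhibit a product-one free ordered sequence of length exactly $\mathsf{L}(G) - 1$; its existence gives $\mathsf{D}_o(G) - 1 \ge \mathsf{L}(G) - 1$ and hence equality. The value of $\mathsf{L}(G)$ I would read off either from the Jennings--Quillen formula $\mathsf{L}(G) = 1 + (p-1)\sum_{i \ge 1} i\, d_i$, where $d_i = \dim_{\mathbb{F}_p}(D_i/D_{i+1})$ for the Jennings series $(D_i)$, or directly from the computations for these groups in \cite{BK}. The construction would be assembled from the polycyclic generators $a,b,c$: one lists suitable powers of the generators in a fixed order, roughly by decreasing Jennings weight, so that the partial product of any initial segment of an ordered subsequence is pushed down a strictly descending chain of the filtration and cannot climb back to the identity. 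This is the $p=2$ analogue of the construction used for odd primes in \cite{NRE}. The hypotheses $\gamma = 1$ for $G_1$, $\sigma = 1$ for $G_3$, and $\gamma \in \{1,2\}$ for $G_4$ are precisely what keep the commutator subgroup small, giving nilpotency class two (or a very short lower central series), so that an arbitrary ordered product collapses to a normal form $a^{i}b^{j}c^{k}$ with controllable exponents.

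The main obstacle is verifying that the constructed sequence is genuinely product-one free. Unlike the abelian case, a non-abelian ordered product can equal the identity through commutator cancellations, so one must check that for every choice of indices $i_1 < \dots < i_k$ the product $g_{i_1}\cdots g_{i_k}$ is nontrivial. I would handle this by rewriting each candidate product in normal form via the defining relations and showing that at least one of the exponents is nonzero modulo the order of the relevant generator; here the special parameter conditions reduce the commutator bookkeeping to a finite, case-based calculation rather than an open-ended one. Matching the length $\mathsf{L}(G) - 1$ of the construction against the computed Loewy length then closes the gap and gives $\mathsf{D}_o(G) = \mathsf{L}(G)$ in each of the four cases.
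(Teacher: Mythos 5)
Your overall architecture matches the paper's: the inequality $\mathsf{D}_o(G)\le \mathsf{L}(G)$ is a general fact about finite $p$-groups (the paper simply cites Dimitrov's theorem, Lemma \ref{Dimitovtheorem}; your augmentation-ideal argument with $\Phi(S^{\ast})=\prod_i(g_i-1)$ is essentially Dimitrov's proof and is correct as written), and the reverse inequality is obtained by computing $\mathsf{L}(G)$ via Jennings' formula and exhibiting a product-one free ordered sequence of length $\mathsf{L}(G)-1$. So there is no disagreement about strategy.

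The gap is that the entire substantive half of the theorem is left unexecuted, and the sketch you give of it would not succeed as described. First, the extremal sequences cannot be built from ``suitable powers of the generators'' alone: for $G_1$ with $\gamma=1$ one has $\mathsf{L}(G)=2^{\alpha}+2^{\beta}+1$, so a product-one free ordered sequence of length $2^{\alpha}+2^{\beta}$ is needed, whereas $a^{[2^{\alpha}-1]}\bdot b^{[2^{\beta}-1]}\bdot c^{[2^{\gamma}-1]}$ has length only $2^{\alpha}+2^{\beta}-1$. The paper's constructions require the mixed terms $a[a,b]$ and $b[a,b]$, each with multiplicity $2^{\gamma}-1$ (resp. $2^{\sigma}-1$, $2^{\gamma-1}-1$), e.g.\ $S^{\ast}=(a^{-1})^{[2^{\alpha}-1]}\bdot(b^{-1})^{[2^{\beta}-1]}\bdot(a[a,b])^{[2^{\gamma}-1]}\bdot(b[a,b])^{[2^{\gamma}-1]}$ for $G_1$, and finding these is where the work lies. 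Second, the verification of product-one freeness reduces, after normal-forming, to showing that a system of congruences such as $-x+z\equiv 0\ (\mathrm{mod}\ 2^{\alpha})$, $-y+w\equiv 0\ (\mathrm{mod}\ 2^{\beta})$, $z+w+yz\equiv 0\ (\mathrm{mod}\ 2^{\gamma})$ admits only the trivial solution in the stated ranges; this is exactly where the hypotheses $\gamma=1$, $\sigma=1$, $\gamma\in\{1,2\}$ enter. Your explanation of those hypotheses (keeping the nilpotency class at two) is not the right one --- all four families have class two for every admissible parameter choice by relation (\ref{2.1}); the restrictions are needed so that the commutator exponent $z+w+yz$ forces triviality. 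Finally, the Loewy lengths cannot be ``read off'' from \cite{BK}, which concerns tensor squares; the paper has to compute them (Section \ref{SectionL(G)}, resting on Propositions \ref{Ind} and \ref{power} about the $M$-series), and that computation is part of what must be supplied.
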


\begin{cor}\label{d}
    If $G$ is a finite group such that $G\cong G_2,$ then $\mathsf{D}_o(G)=\mathsf{d}(G)+1.$
\end{cor}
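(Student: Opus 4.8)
The plan is to read the corollary as a refinement of Theorem~\ref{TheoremOrderedLoewy}(2). Since $G\cong G_2$ gives $\mathsf{D}_o(G)=\mathsf{L}(G)$, and since the introduction records the universal chain $\mathsf{d}(G)+1\le \mathsf{D}_o(G)\le |G|$ (so that $\mathsf{d}(G)\le \mathsf{D}_o(G)-1=\mathsf{L}(G)-1$ is automatic), the asserted equality $\mathsf{D}_o(G)=\mathsf{d}(G)+1$ is equivalent to the single reverse inequality $\mathsf{d}(G)\ge \mathsf{L}(G)-1$. Thus everything reduces to exhibiting one \emph{unordered} product-one free sequence $S\in\mathcal{F}(G)$ of length $\mathsf{L}(G)-1$, i.e. with $1\notin\Pi(S)$. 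The point of the corollary is that, for the special structure of $G_2$, the extremal ordered sequence witnessing $\mathsf{D}_o(G)\ge\mathsf{L}(G)$ can already be taken product-one free in the stronger unordered sense, a phenomenon that fails for general $p$-groups (where $\mathsf{d}(G)+1<\mathsf{D}_o(G)$ is possible).

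Concretely, I would take $S=a^{[o(a)-1]}\bdot b^{[o(b)-1]}$, the natural realization of the length-$(\mathsf{L}(G)-1)$ sequence produced in the proof of Theorem~\ref{TheoremOrderedLoewy}(2), and prove $1\notin\Pi(S)$. Here I use that $\langle a\rangle\trianglelefteq G$ is a cyclic normal subgroup containing the derived subgroup $[G,G]=\langle a^{2^{\alpha-\gamma}}\rangle$, so $G/\langle a\rangle\cong\langle b\rangle$ is cyclic and every element of $G$ is uniquely $a^ib^j$. A nonempty subsequence $T\mid S$ consists of, say, $i$ copies of $a$ and $j$ copies of $b$ with $0\le i\le o(a)-1$ and $0\le j\le o(b)-1$. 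Passing to the abelian quotient $G/\langle a\rangle$, the image of $\pi(T)$ is $\bar b^{\,j}$ for \emph{every} ordering of $T$, independent of the arrangement. Hence $1\in\pi(T)$ would force $o(b)\mid j$, so $j=0$; but then $T=a^{[i]}$ with $1\le i\le o(a)-1$, and its (unique) product $a^i$ satisfies $a^i\ne 1$ since $o(a)\nmid i$. Therefore $1\notin\Pi(S)$ and $\mathsf{d}(G)\ge |S|=o(a)+o(b)-2$.

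The remaining ingredient is to match this length to $\mathsf{L}(G)-1$, i.e. to know $\mathsf{L}(G_2)=o(a)+o(b)-1=2^{\alpha}+2^{\beta}-1$. This is exactly the Loewy-length computation carried out in establishing Theorem~\ref{TheoremOrderedLoewy}(2), where the hypothesis $\alpha\ge 2\gamma$ (making $G_2$ a regular $2$-group, so that commutators, confined to $\langle a\rangle$, interact predictably with powers) forces the modular group algebra $\mathbb{F}_2[G]$ to have the ``abelian-type'' nilpotency index. Granting that value from the theorem, the constructed $S$ has length precisely $\mathsf{L}(G)-1=\mathsf{D}_o(G)-1$, and no longer unordered product-one free sequence can exist because $\mathsf{d}(G)\le\mathsf{D}_o(G)-1$.

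I expect the genuine obstacle to be this last alignment step rather than the verification of product-one freeness, which (as above) is clean and uses only that $\langle a\rangle$ is normal, cyclic, and contains $[G,G]$. In other words, the whole weight of the argument rests on Theorem~\ref{TheoremOrderedLoewy}(2) pinning down $\mathsf{D}_o(G_2)=\mathsf{L}(G_2)$ together with its explicit value; once that is in hand, combining $\mathsf{d}(G)\ge \mathsf{L}(G)-1$ with the automatic $\mathsf{d}(G)\le \mathsf{L}(G)-1$ yields $\mathsf{d}(G)=\mathsf{L}(G)-1=\mathsf{D}_o(G)-1$, which is the claim $\mathsf{D}_o(G)=\mathsf{d}(G)+1$.
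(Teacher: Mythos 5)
Your proposal is correct and follows essentially the same route as the paper: exhibit the unordered sequence $a^{[2^{\alpha}-1]}\bdot b^{[2^{\beta}-1]}$ as product-one free (using the semidirect product structure $\langle a\rangle\cap\langle b\rangle=1$), giving $\mathsf{d}(G)\ge 2^{\alpha}+2^{\beta}-2$, and combine this with $\mathsf{d}(G)+1\le\mathsf{D}_o(G)=\mathsf{L}(G)=2^{\alpha}+2^{\beta}-1$ from Theorem~\ref{TheoremOrderedLoewy}(2) and Lemma~\ref{LemmaG_2}. Your write-up is merely more detailed than the paper's one-line argument in verifying product-one freeness via the quotient $G/\langle a\rangle$.
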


\no In the above connection, we also have the following result: 
\begin{thm}\label{LemmaIndexp}
    Let $G$ be a finite non-cyclic $p$-group. Then $G$ has a cyclic subgroup of index $p$ if and only if $$\mathsf{d}(G)+1=\mathsf{D}_o(G)=\mathsf{L}(G)=\frac{|G|}{p}+p-1.$$
     
\end{thm}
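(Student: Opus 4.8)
The plan is to sandwich all three invariants between the common value $\tfrac{|G|}{p}+p-1$ using two general inequalities together with one explicit construction, and to reduce the whole equivalence to a single statement about the Loewy length. Throughout write $|G|=p^{n}$, so that the target value is $p^{n-1}+p-1$. First I would record two inequalities valid for every finite $p$-group. The inequality $\mathsf{d}(G)+1\le \mathsf{D}_o(G)$ is already available in the excerpt. For the second, $\mathsf{D}_o(G)\le \mathsf{L}(G)$, I would argue inside $\mathbb{F}_p[G]$: if $S^{\ast}=g_1\bdot\dotsc\bdot g_{\ell}$ is a product-one-free ordered sequence, then expanding the product of the elements $g_i-1\in J$ gives $\prod_{i=1}^{\ell}(g_i-1)=\sum_{T^{\ast}\mid S^{\ast}}(-1)^{\ell-|T^{\ast}|}\pi(T^{\ast})$, and the coefficient of the identity equals $(-1)^{\ell}\neq 0$ because only the empty subsequence has product $1$ (no nonempty ordered subsequence does). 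Hence this element of $J^{\ell}$ is nonzero, so $\ell\le \mathsf{L}(G)-1$; taking $\ell=\mathsf{D}_o(G)-1$ gives $\mathsf{D}_o(G)\le \mathsf{L}(G)$ and the chain $\mathsf{d}(G)+1\le \mathsf{D}_o(G)\le \mathsf{L}(G)$.

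Next comes the lower bound. Assuming $G$ has a cyclic subgroup $C=\langle a\rangle$ of index $p$ (so $|C|=p^{n-1}$ and $C\trianglelefteq G$), I would pick $b\in G\setminus C$ and consider the unordered sequence $S=a^{[p^{n-1}-1]}\bdot b^{[p-1]}$, of length $p^{n-1}+p-2$. Any subsequence is $a^{[i]}\bdot b^{[j]}$ with $0\le i\le p^{n-1}-1$ and $0\le j\le p-1$; applying the quotient map $G\to G/C\cong C_2$, more generally $\cong C_p$, sends every rearrangement of such a subsequence to $\bar{b}^{\,j}$, which is trivial only when $j=0$, in which case the product is $a^{i}\neq 1$ since $1\le i< p^{n-1}=o(a)$. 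Thus $1\notin\Pi(S)$, so $\mathsf{d}(G)\ge p^{n-1}+p-2$, i.e. $\mathsf{d}(G)+1\ge p^{n-1}+p-1$.

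The heart of the proof is the following Loewy-length statement, which I would isolate as a lemma: for a non-cyclic $p$-group $G$ of order $p^{n}$ one has $\mathsf{L}(G)\le p^{n-1}+p-1$, with equality if and only if $G$ has a cyclic subgroup of index $p$. I would prove it via Jennings' theorem, writing $\mathsf{L}(G)=1+(p-1)\sum_{i\ge 1} i\,d_i$ with $d_i=\dim_{\mathbb{F}_p}(D_i/D_{i+1})$ and $\sum_i d_i=n$, where $(D_i)$ is the dimension-subgroup (Jennings) series; non-cyclicity forces $d_1=\dim_{\mathbb{F}_p}G/\Phi(G)\ge 2$. Passing to the associated graded restricted Lie algebra $L=\bigoplus_i D_i/D_{i+1}$, which is generated in degree $1$, every higher degree is produced from the $d_1$ degree-one generators by Lie brackets ($[L_i,L_j]\subseteq L_{i+j}$) and the restricted $p$-power map ($L_i\to L_{pi}$); since $i+j\le p\max(i,j)$, the largest degrees are realised by iterated $p$-powers, so the extremal weight multiset consists of $p$-power chains $1,p,p^{2},\dotsc$. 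Maximising $\sum_i i\,d_i$ over $d_1\ge 2$ such chains of total length $n$, the convexity of $k\mapsto \tfrac{p^{k}-1}{p-1}$ forces the maximum at one chain of length $n-1$ plus one chain of length $1$, giving $\sum_i i\,d_i\le 1+\tfrac{p^{n-1}-1}{p-1}$ and hence $\mathsf{L}(G)\le p^{n-1}+p-1$; in the equality case I would lift a generator of the length-$(n-1)$ chain to $a\in G\setminus\Phi(G)$, observe $a^{p^{k}}\in D_{p^{k}}\setminus D_{p^{k}+1}$ for $k\le n-2$, conclude $o(a)=p^{n-1}$, and read off the cyclic subgroup of index $p$. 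I expect the main obstacle to be making this optimisation fully rigorous — bounding how fast the dimension subgroups can shrink directly from the restricted Lie structure rather than from the informal "chains" picture; as a fallback I would invoke Burnside's classification of $p$-groups with a cyclic maximal subgroup (namely $C_{p^{n-1}}\times C_p$, the modular group, and for $p=2$ the dihedral, quaternion and semidihedral groups) and compute $\mathsf{L}$ by Jennings in each family.

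Finally I would assemble the equivalence. If $G$ has a cyclic subgroup of index $p$, the lemma gives $\mathsf{L}(G)=p^{n-1}+p-1$, and combining with the chain and the construction, $p^{n-1}+p-1\le \mathsf{d}(G)+1\le \mathsf{D}_o(G)\le \mathsf{L}(G)=p^{n-1}+p-1$ collapses to the asserted equalities. Conversely, if $\mathsf{d}(G)+1=\mathsf{D}_o(G)=\mathsf{L}(G)=p^{n-1}+p-1$, then in particular $\mathsf{L}(G)=p^{n-1}+p-1$, so the equality case of the lemma produces a cyclic subgroup of index $p$, completing the proof.
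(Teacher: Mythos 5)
Your overall architecture is exactly the paper's: sandwich everything via
\[
\tfrac{|G|}{p}+p-1 \;\le\; \mathsf{d}(G)+1 \;\le\; \mathsf{D}_o(G) \;\le\; \mathsf{L}(G) \;\le\; \tfrac{|G|}{p}+p-1,
\]
and run the converse entirely through the Loewy-length characterization. The difference is that the paper obtains its three ingredients by citation --- $\mathsf{D}_o(G)\le\mathsf{L}(G)$ from Dimitrov, $\mathsf{d}(G)=\frac{|G|}{p}+p-2$ for groups with a cyclic maximal subgroup from Qu--Li--Teeuwsen, and the equivalence $\mathsf{L}(G)=\frac{|G|}{p}+p-1\iff G$ has a cyclic subgroup of index $p$ from Koshitani --- whereas you re-prove them. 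Your group-algebra argument for $\mathsf{D}_o(G)\le\mathsf{L}(G)$ (the coefficient of the identity in $\prod_i(g_i-1)$ is $(-1)^{\ell}$ for a product-one-free ordered sequence) is correct and is essentially Dimitrov's proof, and your explicit product-one-free sequence $a^{[p^{n-1}-1]}\bdot b^{[p-1]}$ correctly gives $\mathsf{d}(G)\ge p^{n-1}+p-2$, which is all the sandwich needs from that side.

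The one genuine gap is in your third ingredient, the claim that for non-cyclic $G$ of order $p^n$ one has $\mathsf{L}(G)\le p^{n-1}+p-1$ with equality exactly when $G$ has a cyclic maximal subgroup. This is Koshitani's theorem, and your sketch of it is not a proof: the step ``since $i+j\le p\max(i,j)$, the extremal weight multiset consists of $p$-power chains'' asserts, without justification, that the support of the Jennings weights $\{i: e_i>0\}$ decomposes into chains $1,p,p^2,\dots$ and that $\sum_i ie_i$ is maximised by one long chain plus one singleton. Making this rigorous requires an actual closure property of the support (every $i>1$ with $e_i>0$ is a sum of two smaller supported indices or $p$ times one) together with a careful extremal argument, and the equality-case lifting of a chain generator to an element of order $p^{n-1}$ leans on the same unproved structure. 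Your proposed fallback --- Burnside's classification of $p$-groups with a cyclic maximal subgroup plus a Jennings computation in each family --- only establishes the ``if'' direction ($G$ has a cyclic maximal subgroup $\Rightarrow \mathsf{L}(G)=p^{n-1}+p-1$); it gives neither the upper bound $\mathsf{L}(G)\le p^{n-1}+p-1$ for general non-cyclic $G$ nor the ``only if'' direction, and the latter is precisely what the converse implication of the theorem rests on. Since this is a known, citable result (and the paper does cite it), the gap is fillable, but as written your proof of the converse is incomplete.
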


\no The rest of the paper is organised as follows: We begin with some preliminaries, which contain useful lemmas and propositions. In Section \ref{SectionTheoremproof1.11.3}, we prove our main results, Theorem \ref{Theoremd(G)} and Theorem \ref{TheoremE(G)}. In Section \ref{SectionL(G)}, we focus on evaluating precise value of $\mathsf{L}(G_i)$ for all $i\in [1,4].$ In the next section, we give a proof of Theorem \ref{TheoremOrderedLoewy} and Theorem \ref{LemmaIndexp}. We conclude the paper with an open question.

\section{Preliminaries}\label{section pre}

\noindent We fix some standard notations. The commutator of elements $x, y$ in a group $G$ is defined as $[x,y]:=x^{-1}y^{-1}xy$. If $H_i$ is a subgroup of $G$ for $i=1,2,$ then $[H_1,H_2]$ denotes the subgroup of $G$ generated by commutators $[h_1,h_2],$ where $h_i \in H_i$ for $i=1,2.$
For any $n\in \mathbb{N}$, $G^{n}:=\langle x^n \mid x\in G \rangle$ is a subgroup of $G.$ If $G$ is a finite $p$-group, then the subgroups $G^{p^n}$ form a descending chain of subgroups of $G$. The minimum number of generators of a group $G$ is denoted by $r(G)$. 

\no Let $p$ be a prime number, and let $G$ be a finite $p$-group. Since the modular group algebra $\mathbb{F}_p[G]$ has a unique two-sided maximal ideal, the Jacobson radical $J$ coincides with the augmentation ideal generated by $\{g-1 \mid g \in G \setminus \{1\}\}$ of $\mathbb{F}_p[G].$
 The dimensions of the Loewy factors $J^i/J^{i+1}$ have been computed by Jennings (see \cite{J} for more details) in terms of the Brauer-Jennings-Zassenhaus series (or $M$-series) $M_i(G)$ of $G$, where
\begin{eqnarray*}
    M_{1}(G) & := & G,\\
    M_{i}(G) & := & [M_{i-1}(G), G]M_{\lceil \frac{i}{p} \rceil}(G)^{p} \text{ for }i \geq 2.
\end{eqnarray*}
\noindent For convenience, we denote $M_i(G)$ by $M_i$ for all $i \ge 1,$ whenever the underlying group $G$ is understood. Through induction, it can be shown that $\{M_i\}$ is a decreasing sequence of characteristic subgroups of $G$. Next, given any $n \in \mathbb{N}$, there exists $r_n \in \mathbb{N}$ with $r_n > n$ such that $M_{r_n} \subsetneq M_n$. So, there exists a natural number $d$ such that $M_{d}\neq 1$, and $M_{d+1}=1.$ Note that while $M_i\neq 1$, the equality $M_i = M_{i+1}$ might hold for some indices $i$. We write:
$$|M_i/M_{i+1}|=p^{e_i} \; \; \text{ for some natural number } e_i,  \text{ for all } i\in [1,d].$$

\no In 1941, Jennings provided an explicit formula to compute $\mathsf{L}(G)$.

\begin{lem}\emph{\cite{J}}\label{JenningsTheorem} Let $G$ be a finite $p$-group, and let $\{M_i\}$ denote the $M$-series of $G$. Then\begin{enumerate}[label=\emph{(\roman*)}]
        \item For any index $i$, the quotient $M_i/M_{i+1}$ is a finite elementary abelian $p$-group.
        \item  The Loewy length $\mathsf{L}(G)=1+(p-1)\sum_{i=1}^d ie_i$, where $d$ is the largest positive integer with $M_{d}\neq 1$ and $p^{e_i}$ is the order of $M_i/M_{i+1}$.
    \end{enumerate}
\end{lem}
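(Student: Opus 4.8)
This is Jennings' classical theorem, so the plan is to recover it through the associated graded algebra of the $J$-adic filtration. First I would filter $\mathbb{F}_p[G]$ by the powers $J=J^1\supseteq J^2\supseteq\cdots$ of the augmentation ideal and pass to the graded algebra $\mathrm{gr}(\mathbb{F}_p[G])=\bigoplus_{n\ge 0}J^n/J^{n+1}$. Since $J$ is nilpotent with nilpotency index exactly $\mathsf{L}(G)$, the Loewy length equals one plus the top nonzero degree of $\mathrm{gr}(\mathbb{F}_p[G])$; hence it suffices to compute the Hilbert (Poincaré) series $H(t)=\sum_{n\ge 0}\dim_{\mathbb{F}_p}(J^n/J^{n+1})\,t^n$ and read off its degree. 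Part (i) I would settle immediately and directly from the recursion: $[M_i,M_i]\subseteq[M_i,G]\subseteq M_{i+1}$ shows $M_i/M_{i+1}$ is abelian, and $M_i^{\,p}\subseteq M_{\lceil(i+1)/p\rceil}^{\,p}\subseteq M_{i+1}$ (using that the $M_n$ decrease and $\lceil(i+1)/p\rceil\le i$) shows it has exponent dividing $p$, so each quotient is elementary abelian, say of order $p^{e_i}$.

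The bridge between the combinatorial $M$-series and the algebra is the family of dimension subgroups $D_n:=G\cap(1+J^n)=\{g\in G:g-1\in J^n\}$, and the central step—the main obstacle—is Jennings' identity $D_n=M_n$ for all $n$. The inclusion $M_n\subseteq D_n$ is the easy direction, by induction on $n$ from $M_{n+1}=[M_n,G]\,M_{\lceil(n+1)/p\rceil}^{\,p}$: if $g-1\in J^n$ and $h\in G$ then the identity $[g,h]-1=g^{-1}h^{-1}\big((g-1)(h-1)-(h-1)(g-1)\big)$ puts $[g,h]-1\in J^{n+1}$, while $x-1\in J^m$ gives $x^p-1=(x-1)^p\in J^{pm}$ because $\binom{p}{k}\equiv 0\pmod p$ for $0<k<p$. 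The reverse inclusion $D_n\subseteq M_n$ is the genuinely hard part (this is the modular dimension-subgroup theorem, valid over $\mathbb{F}_p$ unlike its integral analogue); I would obtain it not in isolation but as a dimension count fed by the restricted-Lie-algebra computation below, so that having matching sizes on both sides forces equality.

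Next I would endow $L:=\bigoplus_{n\ge 1}M_n/M_{n+1}$ with the structure of a graded restricted Lie algebra over $\mathbb{F}_p$: the group commutator induces a bracket $M_i/M_{i+1}\times M_j/M_{j+1}\to M_{i+j}/M_{i+j+1}$ and the $p$-power map induces a restricted operation $M_i/M_{i+1}\to M_{pi}/M_{pi+1}$, both well defined precisely because of the recursion defining the $M_n$. The assignment $g\mapsto g-1$ then induces a homomorphism of graded algebras from the restricted enveloping algebra $u(L)$ to $\mathrm{gr}(\mathbb{F}_p[G])$; surjectivity is clear since the classes of $g-1$ generate $\mathrm{gr}$, and injectivity (hence an isomorphism) follows from the Poincaré–Birkhoff–Witt theorem for restricted Lie algebras together with the equality $D_n=M_n$, which makes the two graded vector spaces have the same dimension in each degree.

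Finally I would compute $H(t)$. Choosing a homogeneous $\mathbb{F}_p$-basis of $L$ with exactly $e_i$ elements in degree $i$, the restricted PBW theorem yields a basis of $u(L)$ consisting of ordered monomials in these generators with each exponent in $[0,p-1]$; a generator of degree $i$ thus contributes a factor $1+t^i+t^{2i}+\cdots+t^{(p-1)i}$, giving $H(t)=\prod_{i=1}^d(1+t^i+\cdots+t^{(p-1)i})^{e_i}$, whose top nonzero degree is $(p-1)\sum_{i=1}^d i e_i$. Adding one for the nilpotency index gives $\mathsf{L}(G)=1+(p-1)\sum_{i=1}^d i e_i$, as claimed. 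Concretely, the same count can be packaged as a \emph{Jennings basis}: pick $x_1,\dots,x_m\in G$ whose images run through bases of the quotients $M_i/M_{i+1}$, assign $x_k$ the weight $i$ for which $x_k\in M_i\setminus M_{i+1}$, and verify that the products $\prod_k(x_k-1)^{a_k}$ with $0\le a_k\le p-1$ form an $\mathbb{F}_p$-basis of $\mathbb{F}_p[G]$ filtered by total weight. This renders the Hilbert-series computation entirely explicit, and the identity $D_n=M_n$ is exactly what guarantees that the weight of $\prod_k(x_k-1)^{a_k}$ equals its $J$-adic order, so I expect the bulk of the work to concentrate on establishing that identity and the PBW basis.
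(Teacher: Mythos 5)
Your proposal is correct in outline, but it takes a genuinely different (and much longer) route than the paper, which simply treats this as a citation: the lemma is attributed to Jennings, and the paper's own vestigial argument handles part (i) elementarily from the recursion and defers part (ii) entirely to Jennings' Theorem~3.7. Your part (i) matches that elementary argument up to a minor variant: you bound $M_i^{\,p}\subseteq M_{\lceil(i+1)/p\rceil}^{\,p}\subseteq M_{i+1}$ using $\lceil(i+1)/p\rceil\le i$, whereas the paper uses $M_i^{\,p}\subseteq M_{ip}\subseteq M_{i+1}$; both are valid. For part (ii) you reconstruct the full classical machinery — dimension subgroups $D_n$, the identity $D_n=M_n$, the graded restricted Lie algebra $\bigoplus M_n/M_{n+1}$, the restricted PBW basis, and the Hilbert series $\prod_i(1+t^i+\cdots+t^{(p-1)i})^{e_i}$ whose top degree $(p-1)\sum ie_i$ gives $\mathsf{L}(G)=1+(p-1)\sum_{i=1}^d ie_i$. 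The auxiliary identities you invoke ($[g,h]-1=g^{-1}h^{-1}((g-1)(h-1)-(h-1)(g-1))$, $x^p-1=(x-1)^p$ in characteristic $p$, the subgroup property of $\{g:g-1\in J^n\}$) all check out, and the final step does require noting that the top graded piece is actually nonzero (spanned by $\prod_k(x_k-1)^{p-1}$), which your Jennings-basis packaging supplies. The one caveat is that the two genuinely hard ingredients — the modular dimension-subgroup theorem $D_n=M_n$ and the linear independence of the weighted monomials $\prod_k(x_k-1)^{a_k}$ — are identified and correctly positioned (they are proved together by the dimension count $p^{\sum_i e_i}=|G|$) but not actually carried out; since the statement is a classical cited result, that is a reasonable place to stop, but it is exactly where all the work of Jennings' theorem lives. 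What your approach buys is a self-contained derivation and an explicit basis of $\mathbb{F}_p[G]$ adapted to the Loewy filtration; what the paper's approach buys is brevity, which is appropriate given that only the numerical formula for $\mathsf{L}(G)$ is used downstream.
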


\begin{lem}\emph{{\cite[Theorem 1, Corollary 1]{Dimitrov}}}\label{Dimitovtheorem}
Let $G$ be a finite $p$-group. Then $\mathsf{D}_o(G) \le \mathsf{L}(G).$
\end{lem}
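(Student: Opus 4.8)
The plan is to sandwich the three invariants between a common lower and upper bound. Write $|G|=p^{n}$ and recall the general chain $\mathsf{d}(G)+1\le \mathsf{D}_o(G)\le \mathsf{L}(G)$, where the first inequality is noted in the introduction and the second is Lemma \ref{Dimitovtheorem}. Everything follows from two facts: (A) a lower bound $\mathsf{d}(G)+1\ge \frac{|G|}{p}+p-1$ valid whenever $G$ has a cyclic subgroup of index $p$, and (B) the Loewy-length statement that for every non-cyclic finite $p$-group $G$ one has $\mathsf{L}(G)\le \frac{|G|}{p}+p-1$, with equality exactly when $G$ has a cyclic subgroup of index $p$. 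Granting (A) and (B), the forward implication is immediate: if $G$ has a cyclic subgroup of index $p$, then $\frac{|G|}{p}+p-1\le \mathsf{d}(G)+1\le \mathsf{D}_o(G)\le \mathsf{L}(G)=\frac{|G|}{p}+p-1$ collapses to equality throughout. Conversely, the hypothesised chain of equalities yields $\mathsf{L}(G)=\frac{|G|}{p}+p-1$, and the equality case of (B) returns a cyclic subgroup of index $p$.

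For (A), let $C=\langle x\rangle$ be cyclic of index $p$ (hence normal, with $o(x)=p^{n-1}$) and pick $y\in G\setminus C$. Consider $S=x^{[p^{n-1}-1]}\bdot y^{[p-1]}$, of length $\frac{|G|}{p}+p-2$. For any nonempty $T\mid S$ using $j$ copies of $x$ and $k$ copies of $y$ with $0\le k\le p-1$, the image of every element of $\pi(T)$ in $G/C\cong C_p$ is $\bar y^{\,k}$, which is trivial only if $k=0$; in that case $\pi(T)=\{x^{j}\}$ with $1\le j\le p^{n-1}-1$, never equal to $1$. Hence $1\notin\Pi(S)$, so $\mathsf{d}(G)\ge \frac{|G|}{p}+p-2$.

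Fact (B) is the heart of the matter and the main obstacle; I would prove it from Jennings' formula (Lemma \ref{JenningsTheorem}). Fixing a basis adapted to the $M$-series, one has $\mathsf{L}(G)=1+(p-1)\sum_{\ell=1}^{n}w_\ell$, where $w_\ell$ is the $M$-weight of the $\ell$-th basis element; here $\sum_\ell w_\ell=\sum_i i\,e_i$, and at least $r(G)\ge 2$ of the weights equal $1$, since $M_2=[G,G]G^{p}$ is the Frattini subgroup and $|G/M_2|=p^{\,r(G)}$ for non-cyclic $G$. The target value $\frac{|G|}{p}+p-1$ corresponds exactly to the weight multiset $\{1,1,p,p^{2},\dots,p^{n-2}\}$, i.e. two generators of weight $1$ together with a single cyclic tower $x,x^{p},\dots,x^{p^{n-2}}$ of weights $p^{0},\dots,p^{n-2}$ arising from an element $x$ of order $p^{n-1}$. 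The inclusions $M_i^{p}\subseteq M_{ip}$ and $[M_i,M_j]\subseteq M_{i+j}$ (immediate from the recursion defining the $M$-series), together with the Jennings product description $M_k=\prod_{ip^{j}\ge k}\gamma_i(G)^{p^{j}}$, show that weights are forced upward only through $p$-th powers and commutators; since any nontrivial $\gamma_i(G)^{p^{j}}$ needs $p^{j}<\exp(G)$, the largest attainable weight is governed by the exponent, and the exponent of a non-cyclic group of order $p^{n}$ is at most $p^{n-1}$.

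The remaining and most delicate point is the equality analysis: one must show that $\sum_\ell w_\ell$ is maximised only by $\{1,1,p,\dots,p^{n-2}\}$, equivalently that attaining the bound forces $r(G)=2$ and $\exp(G)=p^{n-1}$. The trade-off to control is that raising the nilpotency class produces commutator generators of weight $\ge 2$ but spends generators that could otherwise lengthen a single cyclic tower, so any deviation from one maximal-order cyclic subgroup strictly lowers $\sum_\ell w_\ell$; I expect pinning this down to be the hard part. The cleanest route is probably to combine the exponent bound $\exp(G)\le p^{n-1}$ with the observation that $\exp(G)=p^{n-1}$ already forces a cyclic subgroup of index $p$ (an element of order $p^{n-1}$ generates one), and then to verify via Lemma \ref{JenningsTheorem}, applied to the classification of $p$-groups possessing such a subgroup (the abelian $C_{p^{n-1}}\times C_p$, the modular group, and for $p=2$ additionally the dihedral, generalized quaternion, and semidihedral groups), that each attains $\mathsf{L}(G)=\frac{|G|}{p}+p-1$, while every non-cyclic group of exponent at most $p^{n-2}$ falls strictly below. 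Since $\exp(G)=p^{n-1}$ is equivalent to the existence of a cyclic subgroup of index $p$, this closes the equality case of (B) and hence the theorem.
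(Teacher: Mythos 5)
Your proposal does not prove the stated lemma at all: the statement to be established is the inequality $\mathsf{D}_o(G)\le \mathsf{L}(G)$ for an arbitrary finite $p$-group $G$, whereas what you have written is an argument for Theorem \ref{LemmaIndexp} (the equivalence between having a cyclic subgroup of index $p$ and the chain of equalities $\mathsf{d}(G)+1=\mathsf{D}_o(G)=\mathsf{L}(G)=\frac{|G|}{p}+p-1$). Worse, your argument is circular with respect to the target: in your very first paragraph you invoke ``the second [inequality] is Lemma \ref{Dimitovtheorem}'' as a known ingredient, i.e.\ you assume exactly the statement you were asked to prove. Nothing in the rest of the proposal (the product-one free sequence $x^{[p^{n-1}-1]}\bdot y^{[p-1]}$, the Jennings weight analysis, the classification of $p$-groups with a cyclic maximal subgroup) bears on why an ordered sequence of length $\mathsf{L}(G)$ must contain a product-one ordered subsequence.

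The actual content of the lemma (which the paper imports from Dimitrov rather than reproving) is a short group-algebra argument: if $S^{\ast}=g_1\bdot\cdots\bdot g_{\ell}$ is an ordered sequence over $G$ with $\ell\ge \mathsf{L}(G)$, then each $g_i-1$ lies in the Jacobson radical $J$ of $\mathbb{F}_p[G]$, so the ordered product $(g_1-1)(g_2-1)\cdots(g_{\ell}-1)$ lies in $J^{\ell}=0$. Expanding this product expresses $0$ as a signed sum of $\pi(T^{\ast})$ over all ordered subsequences $T^{\ast}\mid S^{\ast}$, including the empty one, which contributes $\pm 1$ to the coefficient of the identity element. If no nonempty ordered subsequence had product one, that coefficient would be $\pm 1\neq 0$ in $\mathbb{F}_p$, a contradiction; hence $1\in\Pi(S^{\ast})$ and $\mathsf{D}_o(G)\le \mathsf{L}(G)$. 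This is the argument your proposal is missing, and none of the machinery you develop substitutes for it.
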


\noindent Note that the group $G=G_i$ for $i\in [1,4],$ satisfies
\begin{equation}
[G,G] \neq 1 \quad \text{and}\quad [[G,G],G]=1. \label{2.1}
\end{equation}
Therefore, $(G^{2^i})^{2^j}= G^{2^{i+j}}$ for any positive integers $i,j$. Furthermore, since  $[G,G]\subseteq G^2$ for any $2$-group $G$, it follows that  $[G,G]^{2^s}\subseteq G^{2^{s+1}}$ for all $s\geq 0.$ The following result is essential for determining the $M$-series.

\begin{prop}\label{Ind} 
Let $G$ be a $2$-group satisfying relation \emph{(\ref{2.1})}. Then, for all $s \geq 1$, we have 
\begin{equation*}\label{equationinduction}
M_j=G^{2^{s}} \;\text{ for all }\; j \in [2^{s-1}+1, 2^s].
\end{equation*}
\end{prop}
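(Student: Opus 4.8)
The plan is to prove the identity by strong induction on the single index $j$, reformulating the statement in the equivalent (partitioned) form: \emph{for every $j \geq 2$, if $s \geq 1$ is the unique integer with $2^{s-1}+1 \leq j \leq 2^s$, then $M_j = G^{2^s}$}. Since the dyadic blocks $[2^{s-1}+1,2^s]$ for $s=1,2,\dots$ partition the integers $\geq 2$, this is exactly the assertion of the proposition read one index at a time. Beyond the defining recursion $M_j = [M_{j-1},G]\,M_{\lceil j/2\rceil}^2$ (the $M$-series relation at $p=2$), I need only three inputs, all available from the hypotheses and the remarks preceding the statement: the class-two relation \eqref{2.1}, which yields the identity $[x^{2^t},y]=[x,y]^{2^t}$ and hence $[G^{2^t},G]\subseteq[G,G]^{2^t}$; the inclusion $[G,G]^{2^t}\subseteq G^{2^{t+1}}$; and the power rule $(G^{2^i})^{2^j}=G^{2^{i+j}}$.

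For the base case $j=2$ (so $s=1$) I would compute directly $M_2 = [M_1,G]\,M_1^2 = [G,G]\,G^2 = G^2$, the last step because $[G,G]\subseteq G^2$ in any $2$-group. For the inductive step, fix $j\geq 3$ and let $s\geq 2$ satisfy $2^{s-1}+1\leq j\leq 2^s$; note that both $j-1$ and $\lceil j/2\rceil$ lie in $[2,j-1]$, so the induction hypothesis applies to each. I treat the two factors of $M_j=[M_{j-1},G]\,M_{\lceil j/2\rceil}^2$ separately. First, a short arithmetic check shows $\lceil j/2\rceil\in[2^{s-2}+1,2^{s-1}]$, i.e. $\lceil j/2\rceil$ sits in the dyadic block exactly one level down; hence $M_{\lceil j/2\rceil}=G^{2^{s-1}}$ by induction, and the power rule gives $M_{\lceil j/2\rceil}^2=(G^{2^{s-1}})^2=G^{2^s}$.

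For the commutator factor I split on whether $j$ is the left endpoint of its block. If $j=2^{s-1}+1$ then $M_{j-1}=M_{2^{s-1}}=G^{2^{s-1}}$, so $[M_{j-1},G]\subseteq[G,G]^{2^{s-1}}\subseteq G^{2^s}$; if instead $j\geq 2^{s-1}+2$ then $M_{j-1}=G^{2^s}$, so $[M_{j-1},G]\subseteq[G,G]^{2^s}\subseteq G^{2^{s+1}}\subseteq G^{2^s}$. In either case $[M_{j-1},G]\subseteq G^{2^s}$, whence $M_j=[M_{j-1},G]\,G^{2^s}=G^{2^s}$, the commutator factor being absorbed; this closes the induction. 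I expect the main obstacle to be purely bookkeeping: keeping the dyadic index arithmetic exact (that $\lceil j/2\rceil$ always drops precisely one level, and that $j-1$ either stays in the current block or lands on the previous block's right endpoint $2^{s-1}$) and handling the boundary index $j=2^{s-1}+1$ cleanly. The only genuinely group-theoretic point is the containment $[G^{2^t},G]\subseteq G^{2^{t+1}}$, which I would establish once via the nilpotency-class-two identity $[x^{2^t},y]=[x,y]^{2^t}$ together with $[G,G]^{2^t}\subseteq G^{2^{t+1}}$; after that the induction runs mechanically.
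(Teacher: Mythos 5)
Your proof is correct and follows essentially the same route as the paper's: both argue by induction via the recursion $M_j=[M_{j-1},G]\,M_{\lceil j/2\rceil}^{2}$, absorbing the commutator factor through $[G^{2^{t}},G]\subseteq [G,G]^{2^{t}}\subseteq G^{2^{t+1}}$ and obtaining $G^{2^{s}}$ from the squared factor one dyadic level down. The only difference is organizational --- you run a single strong induction on $j$ where the paper nests an induction over the block index inside an induction on $s$ --- and your explicit handling of the boundary index $j=2^{s-1}+1$ is, if anything, slightly more careful than the paper's.
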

\begin{proof} Assume that
$P(s) :  M_{2^{s-1}+1}= \cdots =M_{2^{s}}=G^{2^{s}} \; \text{for} \; s \ge 1.$ We will use induction on $s \ge 1$. From the definition of the $M$-series of $G$, we have $M_2=[G,G]G^2$. Since $[G,G]\subseteq G^2$, it follows that $P(1)$ holds. Assume that $P(s)$ is true for some $s\ge 1$, i.e.
\begin{equation}\label{eq:indhyp}
M_{2^{s-1}+1}=\cdots =M_{2^{s}}=G^{2^{s}}.
\end{equation}
We show that $P(s+1): M_{2^{s}+1}=\cdots=M_{2^{s+1}}=G^{2^{s+1}}$ holds. First, using the recursive definition 
\[
M_{2^s+1}
=[M_{2^s},G]\, M_{\lceil (2^s+1)/2\rceil}^{2},
\]
 and since $\lceil (2^{s}+1)/2\rceil=2^{s-1}+1$, the induction hypothesis
\eqref{eq:indhyp} gives $M_{2^{s}}=G^{2^{s}}$ and $M_{2^{s-1}+1}=G^{2^{s}}$. Hence
$M_{2^{s}+1}
=[G^{2^{s}},G]\,(G^{2^{s}})^{2}
=G^{2^{s+1}},$ because 
$[G^{2^{s}},G]\subseteq G^{2^{s+1}}$. Now, assume that $$M_t=G^{2^{s+1}} \text{ for all } t \in [2^{s}+1, k]\quad \text{for some $k$ with $2^{s}+1\le k<2^{s+1}$}.$$ Then \[
M_{k+1}=[M_k,G]\; M_{\lceil (k+1)/2\rceil}^{2} = [G^{2^{s+1}},G]\; M_{\lceil (k+1)/2\rceil}^{2}.
\]
Since $k+1\le 2^{s+1}$, it follows that $\lceil (k+1)/2\rceil\in[2^{s-1}+1,2^{s}]$, and the induction hypothesis \eqref{eq:indhyp} implies $M_{\lceil (k+1)/2\rceil}=G^{2^{s}}$. Therefore
\[
M_{k+1} = [G^{2^{s+1}},G]\,(G^{2^{s}})^2 = G^{2^{s+1}}, \text{ since $[G^{2^{s+1}},G]=[G,G]^{2^{s+1}}\subseteq G^{2^{s+2}}\subseteq G^{2^{s+1}}.$}
\] By induction on $t$, we conclude that 
$M_{2^{s}+1}=\cdots=M_{2^{s+1}}=G^{2^{s+1}},$ and hence $P(s+1)$ holds.
\end{proof}

\noindent In addition to the above proposition, the following result plays an important role in computing the order of the quotient groups $M_i/M_{i+1}$ within $M$-series, which is crucial for determining the precise value of $\mathsf{L}(G)$.

\begin{prop}{\label{power}}
Let $G=\langle a,b \rangle$ be a $2$-group satisfying relation \emph{(\ref{2.1})}. Then $G^{2^s}=\langle a^{2^s}, b^{2^s}, [a,b]^{2^{s-1}} \rangle$ for all $s\geq 1$.
\end{prop}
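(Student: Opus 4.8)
The plan is to exploit that relation~(\ref{2.1}) forces $G$ to have nilpotency class at most $2$: the hypothesis $[[G,G],G]=1$ says precisely that $[G,G]$ is central. Writing $c:=[a,b]$, I would first record the two standard consequences of class $2$. First, commutators are bilinear, so that $[a^{i}b^{j},a^{k}b^{l}]=c^{\,il-jk}$; in particular $[G,G]=\langle c\rangle$ is cyclic and central. Second, every element of $G$ admits a normal form $x=a^{i}b^{j}c^{k}$ for suitable integers $i,j,k$, since $G/[G,G]$ is abelian and generated by the images of $a$ and $b$, and $[G,G]=\langle c\rangle$.

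For the inclusion $\langle a^{2^{s}},b^{2^{s}},c^{2^{s-1}}\rangle\subseteq G^{2^{s}}$, the elements $a^{2^{s}}$ and $b^{2^{s}}$ are $2^{s}$-th powers by definition, and $c^{2^{s-1}}\in[G,G]^{2^{s-1}}\subseteq G^{2^{s}}$ by the containment $[G,G]^{2^{t}}\subseteq G^{2^{t+1}}$ noted just before the proposition. So this direction is immediate.

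The substantive direction is $G^{2^{s}}\subseteq\langle a^{2^{s}},b^{2^{s}},c^{2^{s-1}}\rangle$, for which it suffices to show that $x^{2^{s}}$ lies in the right-hand side for each $x\in G$. Using the normal form $x=a^{i}b^{j}c^{k}$ together with the centrality of $c$, I would apply the class-$2$ power identity $(uv)^{n}=u^{n}v^{n}[v,u]^{\binom{n}{2}}$ (which follows by a short induction on $n$, collecting one generator past the other via $vu=uv[v,u]$) with $u=a^{i}$, $v=b^{j}$, and $n=2^{s}$. Since bilinearity gives $[b^{j},a^{i}]=c^{-ij}$, this yields
\[
x^{2^{s}}=a^{\,i2^{s}}\,b^{\,j2^{s}}\,c^{\,k2^{s}-ij\binom{2^{s}}{2}}.
\]
The first two factors are powers of $a^{2^{s}}$ and $b^{2^{s}}$. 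For the last factor I would invoke the elementary identity $\binom{2^{s}}{2}=2^{s-1}(2^{s}-1)$, so that both $k2^{s}$ and $ij\binom{2^{s}}{2}$ are divisible by $2^{s-1}$; hence $c^{\,k2^{s}-ij\binom{2^{s}}{2}}$ is a power of $c^{2^{s-1}}$, and $x^{2^{s}}\in\langle a^{2^{s}},b^{2^{s}},c^{2^{s-1}}\rangle$. Combining the two inclusions gives the claim.

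The only genuine obstacle is the exponent bookkeeping in the power identity: once the class-$2$ commutator calculus and the normal form are in place, the divisibility $2^{s-1}\mid\binom{2^{s}}{2}$ does all the work and the remainder is routine. I would therefore concentrate on making the collection formula $(uv)^{n}=u^{n}v^{n}[v,u]^{\binom{n}{2}}$ precise, since everything downstream depends on getting the exponent of $[v,u]$ exactly right.
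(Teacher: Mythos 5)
Your proposal is correct and follows essentially the same route as the paper: both reduce to showing that each generator $x^{2^{s}}$ of $G^{2^{s}}$ lies in $\langle a^{2^{s}},b^{2^{s}},[a,b]^{2^{s-1}}\rangle$ by writing $x=a^{i}b^{j}[a,b]^{k}$ and computing $x^{2^{s}}=a^{i2^{s}}b^{j2^{s}}[a,b]^{2^{s-1}(2k-ij(2^{s}-1))}$ via the class-$2$ collection formula. Your exponent $k2^{s}-ij\binom{2^{s}}{2}$ agrees with the paper's, and your explicit justification of the normal form and of the divisibility $2^{s-1}\mid\binom{2^{s}}{2}$ only makes explicit what the paper leaves implicit.
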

\begin{proof} Clearly, $\langle a^{2^s}, b^{2^s}, [a,b]^{2^{s-1}} \rangle \subset G^{2^s} $ for all $s\geq 1$.
Now, let $y \in G^{2^s}$. Then $y=x^{2^s}_1x^{2^s}_2 \cdots x^{2^s}_k$ for some $k\in \mathbb{N}$, where each $x_{i}=a^{t_i} b^{u_i} [a,b]^{v_i}$ for some $t_i, u_i, v_i \in \mathbb{Z}$. Therefore, $x^{2^s}_{i}=(a^{t_i} b^{u_i} [a,b]^{v_i})^{2^s}=a^{2^s t_i}b^{2^s u_i}[a,b]^{2^{s-1}({2v_i}-{t_{i}u_{i}(2^{s}-1))}}$ for each $i$, the result follows.
\end{proof}

\begin{lem}\emph{\cite[Theorem 1.6]{Koshitani1977}}\label{EqL}
    Let $G$ be a finite $p$-group for prime $p.$ Then $\mathsf{L}(G) = \frac{|G|}{p}+p-1$ if and only if $G$ is non-cyclic and has a cyclic subgroup of index $p.$
\end{lem}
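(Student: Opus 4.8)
The plan is to prove the two implications of the biconditional separately, observing that Lemma~\ref{EqL} already converts the chain of equalities into a purely group-theoretic statement about $\mathsf{L}(G)$; consequently the real content lies in controlling $\mathsf{d}(G)$ and $\mathsf{D}_o(G)$ against $\mathsf{L}(G)$, and the bulk of the work is a single lower-bound construction.

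For the reverse implication, suppose that $\mathsf{d}(G)+1=\mathsf{D}_o(G)=\mathsf{L}(G)=\frac{|G|}{p}+p-1$. In particular $\mathsf{L}(G)=\frac{|G|}{p}+p-1$, and since $G$ is non-cyclic by hypothesis, Lemma~\ref{EqL} immediately yields that $G$ has a cyclic subgroup of index $p$. This direction therefore requires no further work.

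For the forward implication, I would assume $G$ has a cyclic subgroup $H=\langle a\rangle$ of index $p$ and write $|G|=p^n$, so that $|H|=p^{n-1}=\frac{|G|}{p}$. Since $G$ is non-cyclic, Lemma~\ref{EqL} gives $\mathsf{L}(G)=\frac{|G|}{p}+p-1$. Combining the general inequality $\mathsf{d}(G)+1\le \mathsf{D}_o(G)$ with Dimitrov's bound $\mathsf{D}_o(G)\le \mathsf{L}(G)$ (Lemma~\ref{Dimitovtheorem}) produces the chain
$$\mathsf{d}(G)+1\le \mathsf{D}_o(G)\le \mathsf{L}(G)=\frac{|G|}{p}+p-1.$$
It then suffices to prove the matching lower bound $\mathsf{d}(G)\ge \frac{|G|}{p}+p-2$, for then every inequality above is forced to be an equality and all four quantities coincide. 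To achieve this I would exhibit an explicit product-one free sequence of length $\frac{|G|}{p}+p-2$.

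The proposed witness is $S=a^{[\,p^{n-1}-1\,]}\bdot g^{[\,p-1\,]}$, where $g\in G\setminus H$; its length is $(p^{n-1}-1)+(p-1)=\frac{|G|}{p}+p-2$. Verifying $1\notin \Pi(S)$ is the only substantive point, and the key idea is to pass to the quotient $G/H$. Since $H$ has index $p$ in the $p$-group $G$ it is normal, so $G/H\cong C_p$ and the image $\bar g$ of $g$ generates $G/H$. Any non-empty subsequence $T\mid S$ has the form $a^{[i]}\bdot g^{[j]}$ with $0\le i\le p^{n-1}-1$ and $0\le j\le p-1$, and every rearrangement of $T$ maps under $G\to G/H$ to $\bar g^{\,j}$, because the factors coming from $a\in H$ vanish; hence if $j\in[1,p-1]$ then $\bar g^{\,j}\neq 1$ and no ordering of $T$ can have product $1$, so $1\notin\pi(T)$. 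If instead $j=0$ then $T=a^{[i]}$ with $1\le i\le p^{n-1}-1$, and $\pi(T)=\{a^{i}\}$ with $a^{i}\neq 1$ as $|a|=p^{n-1}$. Thus $1\notin\Pi(S)$, giving $\mathsf{d}(G)\ge \frac{|G|}{p}+p-2$ and completing the proof. I expect this verification to be the main obstacle: in the non-abelian setting $\pi(T)$ is a set of products ranging over all orderings, and the quotient argument is precisely what makes the conclusion uniform across orderings, since all rearrangements of $T$ collapse to the same element of the abelian quotient $G/H$.
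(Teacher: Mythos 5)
Your argument does not prove Lemma~\ref{EqL}: it is circular. In both directions the step that actually concerns $\mathsf{L}(G)$ is discharged by citing Lemma~\ref{EqL} itself (``Lemma~\ref{EqL} immediately yields that $G$ has a cyclic subgroup of index $p$''; ``Lemma~\ref{EqL} gives $\mathsf{L}(G)=\frac{|G|}{p}+p-1$''). What you have written is in substance a proof of Theorem~\ref{LemmaIndexp} \emph{assuming} Lemma~\ref{EqL} --- which is essentially how the paper proves that theorem --- but the statement under review is Lemma~\ref{EqL} itself, which the paper does not prove at all: it is quoted from Koshitani \cite{Koshitani1977}. Note also that your ``reverse implication'' starts from the hypothesis $\mathsf{d}(G)+1=\mathsf{D}_o(G)=\mathsf{L}(G)=\frac{|G|}{p}+p-1$, which is not the hypothesis of either direction of Lemma~\ref{EqL}; that lemma mentions only $\mathsf{L}(G)$.

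A genuine proof of Lemma~\ref{EqL} has to engage with the modular group algebra: $\mathsf{L}(G)$ is the nilpotency index of the Jacobson radical of $\mathbb{F}_p[G]$, and the natural route is Jennings' formula (Lemma~\ref{JenningsTheorem}), $\mathsf{L}(G)=1+(p-1)\sum_i ie_i$, computed from the $M$-series. For the ``if'' direction one must carry out this computation for every non-cyclic $p$-group with a cyclic maximal subgroup (for $p$ odd essentially $C_{p^{n-1}}\times C_p$ and the modular group of order $p^n$; for $p=2$ also the dihedral, generalized quaternion and semidihedral groups) and verify that $(p-1)\sum_i ie_i=p^{n-1}+p-2$; for the ``only if'' direction one must show that no other $p$-group attains this value. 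None of this appears in your write-up. The one piece of genuine content you do supply --- the product-one free sequence $a^{[p^{n-1}-1]}\bdot g^{[p-1]}$ together with the quotient argument showing $1\notin\Pi(S)$ --- is correct, but it establishes the equality case of Lemma~\ref{dd} (a statement about $\mathsf{d}(G)$, cited in the paper from \cite{QuLiTeeuwsen2022}), and says nothing about $\mathsf{L}(G)$.
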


\begin{lem}\emph{\cite[Theorem 1.1]{QuLiTeeuwsen2022}}\label{dd}
    Let $G$ be a finite non-cyclic group and $p$ the smallest prime divisor of $|G|.$
Then $$\mathsf{d}(G) \le  \frac{|G|}{p}+p-2 $$with equality if $G$ contains a cyclic subgroup of index $p.$
\end{lem}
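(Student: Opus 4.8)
The plan is to establish the two inequalities separately: the general upper bound $\mathsf{d}(G)\le \frac{|G|}{p}+p-2$ for every finite non-cyclic $G$ whose smallest prime divisor is $p$, and the matching lower bound $\mathsf{d}(G)\ge \frac{|G|}{p}+p-2$ under the additional hypothesis that $G$ has a cyclic subgroup of index $p$. Since the statement asserts equality only in the ``if'' direction, these two bounds together suffice, and the upper bound does all the work in the inequality while the lower bound supplies the extremal example.

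I would dispose of the lower bound first, as it is explicit. Suppose $H=\langle a\rangle\le G$ with $[G:H]=p$, so $|H|=|G|/p$ and $o(a)=|H|$. Because $p$ is the smallest prime dividing $|G|$, the subgroup $H$ of index $p$ is automatically normal, hence $G/H\cong C_p$; fix $b\in G\setminus H$, so that $bH$ generates $G/H$. Consider the sequence $S=a^{[|H|-1]}\bdot b^{[p-1]}$, of length $(|H|-1)+(p-1)=\frac{|G|}{p}+p-2$. I claim $S$ is product-one free. Let $T\mid S$ be a non-empty subsequence made of $i$ copies of $a$ and $j$ copies of $b$, with $0\le i\le |H|-1$ and $0\le j\le p-1$. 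As $G/H$ is abelian and $a\mapsto H$, every element of $\pi(T)$ has the same image $(bH)^{j}$ in $G/H$; hence $1\in\pi(T)$ forces $(bH)^{j}=H$, i.e.\ $p\mid j$, i.e.\ $j=0$. Then $T=a^{[i]}$ with $1\le i\le |H|-1<o(a)$, so $\pi(T)=\{a^{i}\}\not\ni 1$. Thus $1\notin\Pi(S)$ and $\mathsf{d}(G)\ge|S|=\frac{|G|}{p}+p-2$.

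For the upper bound I would split on $p$. When $p=2$ the group $G$ has even order and the bound is exactly $\frac{|G|}{2}=\big\lfloor|G|/2\big\rfloor$, which is precisely the Olson--White estimate $\mathsf{d}(G)\le\big\lfloor|G|/2\big\rfloor$ already recorded in the introduction \cite{OW}; this settles all non-cyclic groups with smallest prime $2$, including the non-solvable ones. When $p$ is odd, $|G|$ is odd and $G$ is solvable by Feit--Thompson, so $G$ admits a normal subgroup $N$ of prime index, and I would induct on $|G|$ via the block-extraction (inductive) method applied to the quotient $G/N$. Given a product-one free $S$ over $G$, pass to $\bar S$ over $G/N$ and extract a maximal family of pairwise disjoint non-empty subsequences $T_1,\dots,T_k\mid S$ whose images are minimal product-one sequences in $G/N$; the complement $R=S\bdot(T_1\bdot\cdots\bdot T_k)^{[-1]}$ maps to a product-one free sequence in $G/N$, so $|R|\le\mathsf{d}(G/N)$. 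Each $T_i$ has a product $n_i\in N$, and $n_1\bdot\cdots\bdot n_k$ is product-one free in $N$ (else a subfamily of the $T_i$ reorders to a product-one subsequence of $S$), giving $k\le\mathsf{d}(N)$; moreover every term of $S$ lying in $N$ appears as a length-one block, while each block of length $\ge 2$ is supported on $G\setminus N$ and has length at most $\mathsf{d}(G/N)+1$.

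The main obstacle is that the crude estimates $|T_i|\le\mathsf{d}(G/N)+1$ and $k\le\mathsf{d}(N)$ only return the trivial $\mathsf{d}(G)\le|G|-1$, so the sharp constant $\frac{|G|}{p}+p-2$ must come from controlling the \emph{total} block length $\sum_i|T_i|$, not from bounding count and length separately. Concretely, taking $N$ of index $p$ so that $G/N\cong C_p$, I would exploit that in $C_p$ the only minimal product-one sequence of the maximal length $p$ is $\bar g^{[p]}$ for a single nonzero residue; hence a long block forces many equal terms outside $N$, which, via the multiplicity bound $\mathsf{v}_g(S)\le o(g)-1$ valid for product-one free $S$, heavily constrains how terms distribute among the cosets of $N$. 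The sharp bound should then emerge from a coset-by-coset count fed by the induction hypothesis $\mathsf{d}(N)\le\frac{|N|}{p}+p-2$ in the non-cyclic case, together with a direct treatment when $N$ is cyclic (which is exactly the equality configuration built above). The delicate bookkeeping here, and the verification that a normal subgroup of the \emph{smallest} prime index $p$ can be arranged in the odd-order induction, is where I expect the real difficulty to lie, and is the part that must be carried out in full agreement with the analysis of \cite{QuLiTeeuwsen2022}.
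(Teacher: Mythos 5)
This lemma is quoted in the paper directly from \cite[Theorem 1.1]{QuLiTeeuwsen2022}; the paper contains no proof of it, so the only question is whether your argument actually establishes the statement on its own. It does not. Your treatment of the lower bound is complete and correct: the sequence $a^{[|H|-1]}\bdot b^{[p-1]}$ over a cyclic subgroup $H=\langle a\rangle$ of index $p$ (normal because $p$ is the smallest prime divisor) is product-one free by the coset argument you give, so $\mathsf{d}(G)\ge \frac{|G|}{p}+p-2$ in the equality case. The case $p=2$ of the upper bound also follows as you say from the Olson--White bound $\mathsf{d}(G)\le\lfloor |G|/2\rfloor$ recorded in the introduction, since $\frac{|G|}{2}+2-2=\frac{|G|}{2}$.

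The gap is the upper bound for odd $p$, which is the entire content of the cited theorem. You invoke Feit--Thompson, set up a block-extraction induction through a normal subgroup $N$ of index $p$, and then correctly observe that the naive estimates $k\le\mathsf{d}(N)$ and $|T_i|\le\mathsf{d}(G/N)+1$ only give the trivial bound $|G|-1$; but the refinement you then describe (``a coset-by-coset count'', ``delicate bookkeeping'') is never carried out, and you explicitly defer it to the analysis of \cite{QuLiTeeuwsen2022}. That deferred step is where the theorem lives, so the proposal is a proof outline rather than a proof. Moreover, the setup itself is not secured: a solvable group of odd order need have no normal subgroup of index equal to its \emph{smallest} prime divisor $p$ (solvability only guarantees a normal subgroup of index $q$ for \emph{some} prime $q$ dividing $|G/[G,G]|$), so the very first step of the proposed induction, ``take $N$ of index $p$ so that $G/N\cong C_p$'', can fail, and you would need either a reduction handling quotients by larger primes or a structural result producing such an $N$. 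As written, the argument establishes the lemma only for $p=2$ and the equality direction.
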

\no The following results are useful in obtaining the upper bound on the Gao constant $\mathsf{E}(G)$ in Theorem \ref{TheoremE(G)}:

\begin{lem}\label{d(G)D_o(G)}
    Let $G$ be a finite group. Then 
    $$\mathsf{d}(G)+1 \le \mathsf{D}_o(G) \le |G|.$$
    Moreover, $\mathsf{d}(G)+1 = \mathsf{D}_o(G)$ when $G$ is abelian, and $\mathsf{D}_o(G)=|G|$ if and only if $G$ is cyclic.
    \end{lem}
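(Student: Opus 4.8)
The plan is to verify the four assertions in turn, each resting on a short pigeonhole or definitional argument, with the only nontrivial external input being the Olson--White bound, which enters solely in the forward direction of the cyclic characterization.

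First I would establish the chain $\mathsf{d}(G)+1 \le \mathsf{D}_o(G) \le |G|$. For the left inequality, take a product-one free sequence $S \in \mathcal{F}(G)$ with $|S| = \mathsf{d}(G)$ and fix any ordered sequence $S^{\ast}$ whose underlying multiset is $S$. Every ordered subsequence $T^{\ast} \mid S^{\ast}$ satisfies $\pi(T^{\ast}) \in \pi(T) \subseteq \Pi(S)$, where $T$ is the underlying unordered subsequence; since $1 \notin \Pi(S)$, we get $1 \notin \Pi(S^{\ast})$. Thus $S^{\ast}$ is a product-one free ordered sequence of length $\mathsf{d}(G)$, forcing $\mathsf{D}_o(G) \ge \mathsf{d}(G)+1$. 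For the right inequality, given any ordered sequence $S^{\ast} = g_1 \bdot \dotsc \bdot g_{\ell}$ with $\ell \ge |G|$, consider the $\ell+1$ prefix products $P_0 = 1,\; P_1 = g_1,\; \dotsc,\; P_{\ell} = g_1\cdots g_{\ell}$. As $\ell + 1 > |G|$, two of them coincide, say $P_i = P_j$ with $i<j$, whence the consecutive ordered subsequence $g_{i+1}\bdot \dotsc \bdot g_j$ has product $P_i^{-1}P_j = 1$. Hence $1 \in \Pi(S^{\ast})$, so $\mathsf{D}_o(G) \le |G|$.

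Next, for the abelian case, products are independent of the order of multiplication, so $\pi(T)$ is a singleton and $\Pi(S^{\ast}) = \Pi(S)$ for any ordered sequence $S^{\ast}$ with underlying multiset $S$. Given $S^{\ast}$ of length at least $\mathsf{d}(G)+1$, its underlying $S$ has length exceeding $\mathsf{d}(G)$ and hence is not product-one free, so $1 \in \Pi(S) = \Pi(S^{\ast})$. This yields $\mathsf{D}_o(G) \le \mathsf{d}(G)+1$, and together with the first part we obtain equality.

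Finally, for the characterization of equality, the backward direction follows by exhibiting, for cyclic $G = \langle g \rangle$ of order $n$, the ordered sequence $g \bdot g \bdot \dotsc \bdot g$ of length $n-1$: every ordered subsequence equals $g^k$ with $1 \le k \le n-1$, none of which is $1$, so $\mathsf{D}_o(G) \ge n = |G|$, and combined with $\mathsf{D}_o(G) \le |G|$ this gives equality. For the forward direction I would argue contrapositively: if $G$ is non-cyclic, then the Olson--White bound $\mathsf{D}_o(G) \le \big\lceil \tfrac{|G|+1}{2} \big\rceil$ \cite{OW} gives $\mathsf{D}_o(G) \le |G|-1 < |G|$, using that a non-cyclic group has order at least $4$. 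This last step is the only place where a cited theorem is invoked; the remaining difficulty is purely bookkeeping, namely keeping straight the passage between ordered and unordered subsequences in the definitions of $\mathsf{d}(G)$ and $\mathsf{D}_o(G)$.
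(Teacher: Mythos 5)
Your proposal is correct and follows essentially the same route as the paper: the paper treats the chain of inequalities and the abelian case as immediate from the definitions (you simply spell out the standard prefix-product pigeonhole and the order-independence of products), and for the cyclic characterization it uses exactly your two ingredients, the sequence $g^{[|G|-1]}$ for the backward direction and the Olson--White bound $\mathsf{D}_o(G)\le\lceil(|G|+1)/2\rceil$ for the forward direction. No gaps.
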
 
\begin{proof} 

It is enough to prove the second part of the statement. If $G=\langle g \rangle$, then consider the sequence $S=g^{[|G|-1]}$ to have $ \mathsf{D}_o(G) = |G|.$ On the other hand, let us assume $\mathsf{D}_o(G)=|G|$. Since $\mathsf{D}_{o}(G) \le \bigg\lceil \frac{|G|+1}{2} \bigg\rceil$ for a finite non-cyclic group $G$, we are done.
\end{proof}

\begin{lem}\emph{\cite[Corollary 5.5]{CD2014}}\label{beta}
    For any finite abelian group $A$, we have $\beta (A \rtimes_{-1} C_2)= \mathsf{d}(A)+2.$
\end{lem}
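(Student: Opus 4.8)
The plan is to write $G=A\rtimes_{-1}C_2$ with $C_2=\langle t\rangle$ acting by $tat^{-1}=a^{-1}$, and to prove the matching inequalities $\beta(G)\ge\mathsf{d}(A)+2$ and $\beta(G)\le\mathsf{d}(A)+2$. Since $\mathrm{char}(\mathbb{F})\nmid|G|$, averaging over $C_2$ gives $\mathbb{F}[V]^{G}=(\mathbb{F}[V]^{A})^{C_2}$ for every $G$-module $V$, so the task reduces to controlling how passing to $C_2$-invariants inflates the generating degree of the $A$-invariant ring $R:=\mathbb{F}[V]^{A}$. As $A$ is abelian and $\mathrm{char}(\mathbb{F})\nmid|A|$, I decompose $V$ into $A$-weight spaces and pick coordinates $x_1,\dots,x_m$ of weights $\chi_1,\dots,\chi_m\in\widehat A$; an invariant monomial is precisely one whose weight multiset is a product-one sequence over $\widehat A$, and by Schmid's theorem \cite{Schmid} (so $\beta(A)=\mathsf{d}(A)+1$) the ring $R$ is generated by such monomials of degree $\le\mathsf{d}(A)+1$, i.e.\ by minimal product-one sequences. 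Because $t$ inverts weights, it interchanges the weight spaces $V_\chi\leftrightarrow V_{\chi^{-1}}$; choosing coordinates compatibly with this pairing, $t$ permutes the generating monomials, and symmetrizing/antisymmetrizing splits them into homogeneous $t$-even and $t$-odd generators.

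For the lower bound I would produce an indecomposable $G$-invariant of degree $\mathsf{d}(A)+2$. Fix a minimal product-one sequence $\chi_1,\dots,\chi_L$ over $\widehat A$ of length $L=\mathsf{d}(A)+1$ and form the $A$-module with coordinates $x_1,\dots,x_L$ of these weights and $y_1,\dots,y_L$ of the inverse weights, with $t$ swapping $x_i\leftrightarrow y_i$; adjoin the sign representation $\mathbb{F}z$ (so $a\cdot z=z$, $t\cdot z=-z$), which exists because $G/A\cong C_2$. Then $m=x_1\cdots x_L$ is an $A$-invariant, $f=m-t(m)$ is a $t$-odd $A$-invariant of degree $L$, and $zf$ is a $G$-invariant of degree $L+1=\mathsf{d}(A)+2$. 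Minimality of the sequence makes $m$ an indecomposable $A$-invariant, and a short bookkeeping on the $z$-degree (any factorization must put the single $z$ into one factor of the form $z\cdot(\text{shorter odd }A\text{-invariant})$, forcing $m$ to split) shows $zf$ is not a polynomial in lower-degree $G$-invariants; hence $\beta(G)\ge\mathsf{d}(A)+2$.

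For the upper bound I would use that $R^{C_2}$ is generated by the $t$-even generators of $R$ together with pairwise products of $t$-odd generators. The even generators already have degree $\le\mathsf{d}(A)+1$, so everything comes down to products of two $t$-odd generators $f=m-t(m)$ and $g=n-t(n)$. Writing $\mathrm{Tr}(u):=u+t\!\cdot\! u$ for the $C_2$-symmetrization, a direct expansion gives $fg=\mathrm{Tr}(mn)-\mathrm{Tr}(m\cdot t n)$, so it suffices to bound the degree of indecomposable elements of the form $\mathrm{Tr}(w)$ for $A$-invariant monomials $w$. For any factorization $w=w_1w_2$ into $A$-invariant monomials one checks
\[
\mathrm{Tr}(w_1w_2)=\mathrm{Tr}(w_1)\,\mathrm{Tr}(w_2)-\mathrm{Tr}(w_1\cdot t w_2),
\]
and when $w_1$ is inversion-invariant ($t w_1=w_1$) the last term equals $\mathrm{Tr}(w_1w_2)$, so $\mathrm{Tr}(w_1w_2)=\tfrac12\mathrm{Tr}(w_1)\mathrm{Tr}(w_2)$ is decomposable. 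In particular $\mathrm{Tr}(w)$ is decomposable whenever $w$ contains a coordinate together with its $t$-partner (the degree-$2$ block $x_i\,t(x_i)$ is inversion-invariant), and more generally whenever $w$ admits any proper inversion-invariant product-one sub-factor.

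The main obstacle is then purely combinatorial: to show that every $A$-invariant monomial $w$ with $\deg w\ge\mathsf{d}(A)+3$ can be reduced to such a situation. The identity above also shows that flipping one factor of a factorization by $t$ only changes $\mathrm{Tr}(w)$ by a sign modulo decomposables, so I would factor $w$ into minimal product-one blocks (each of length $\le\mathsf{d}(A)+1$) and use these sign-flips to manufacture a repeated or partner-paired block, i.e.\ an inversion-invariant sub-factor, whenever the length exceeds $\mathsf{d}(A)+2$; the Davenport constant enters exactly in guaranteeing enough blocks to carry this out. Verifying that the manoeuvre always succeeds for $\deg w\ge\mathsf{d}(A)+3$ while it must fail at degree $\mathsf{d}(A)+2$ (as the lower bound forces) is the delicate heart of the argument. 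Granting it, every indecomposable $G$-invariant has degree $\le\mathsf{d}(A)+2$, and combined with the lower bound we obtain $\beta(A\rtimes_{-1}C_2)=\mathsf{d}(A)+2$.
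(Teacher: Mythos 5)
First, note that the paper does not prove this statement at all: Lemma \ref{beta} is quoted verbatim from Cziszter--Domokos \cite[Corollary 5.5]{CD2014}, so there is no in-paper argument to compare against; your proposal has to stand on its own as a proof of their theorem.

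Your lower bound is essentially sound and is the standard construction: realizing a minimal product-one sequence of length $\mathsf{d}(A)+1$ as weights, antisymmetrizing to get a $t$-odd $A$-invariant $f=m-t(m)$, and multiplying by a sign-representation variable $z$. The indecomposability argument works because every monomial occurring in an $A$-invariant polynomial is itself an $A$-invariant monomial (the $A$-action is diagonal), so a decomposition of $zf$ would force a proper $A$-invariant factorization of $m$, contradicting minimality. (You should still say a word about the field: you implicitly need enough roots of unity to realize the characters $\chi_i$ and to split $V$ into weight spaces; this is the usual convention in \cite{CD2014} but it is not automatic from ``$\mathrm{char}(\mathbb{F})\nmid|G|$''.)

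The genuine gap is in the upper bound, and you have named it yourself: the claim that every $A$-invariant monomial $w$ with $\deg w\ge \mathsf{d}(A)+3$ can, after finitely many block sign-flips $w_1w_2\mapsto w_1\cdot t(w_2)$ (each of which changes $\mathrm{Tr}(w)$ only by a sign modulo decomposables), be brought to a monomial containing a proper inversion-invariant product-one sub-factor. Everything before that point is correct --- the identities $fg=\mathrm{Tr}(mn)-\mathrm{Tr}(m\cdot tn)$ and $\mathrm{Tr}(w_1w_2)=\mathrm{Tr}(w_1)\mathrm{Tr}(w_2)-\mathrm{Tr}(w_1\cdot tw_2)$, and the observation that $\mathrm{Tr}(w_1w_2)=\tfrac12\mathrm{Tr}(w_1)\mathrm{Tr}(w_2)$ when $t w_1=w_1$, are all verified easily --- but the reduction you ``grant'' is precisely the hard content of the Cziszter--Domokos theorem. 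It is not a routine pigeonhole: a long $A$-invariant monomial factors into at most minimal product-one blocks of length up to $\mathsf{d}(A)+1$, and having degree $\ge\mathsf{d}(A)+3$ does not by itself produce two blocks that are equal or $t$-partnered, nor does it obviously produce an inversion-invariant sub-factor after flips; one needs a genuine combinatorial argument (in \cite{CD2014} this occupies the core of Sections 4--5, via a careful analysis of relations among the $\mathrm{Tr}(w)$ and generalized Davenport-type constants). Without supplying that step, the proposal proves only $\beta(G)\ge\mathsf{d}(A)+2$ together with the trivial bound $\beta(G)\le 2\beta(A)=2\mathsf{d}(A)+2$, not the equality.
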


\begin{lem}\emph{\cite[Proposition 5.7.7]{GerolHalt}}\label{n,2n}
    Let $G$ be a finite abelian group and $n\in \mathbb{N}$. Let $S$ be a sequence over $G,$ and $\mathsf{d}(G \times C_n)\le 3n-2.$ If $|S|\ge \mathsf{d}(G\times C_n)+1$ then $S$ has a product-one subsequence $T$ over $G$ with $|T|\in \{n,2n\}.$
\end{lem}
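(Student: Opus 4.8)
The plan is to transfer the problem to the group $G \times C_n$, where the hypothesis on $\mathsf{d}(G\times C_n)$ becomes directly usable, and then to pin down the length of the resulting product-one subsequence by a minimality argument. Write $C_n=\langle e\rangle$ with $o(e)=n$, and for any subsequence $R=h_1\bdot \dotsc \bdot h_m \mid S$ introduce the associated sequence $\widehat{R}:=(h_1,e)\bdot \dotsc \bdot (h_m,e)$ over $G\times C_n$. Since $G$ is abelian, $\pi(\widehat{R})=(\pi(R),e^{|R|})$, so $\widehat{R}$ is a product-one sequence over $G\times C_n$ if and only if $R$ is product-one over $G$ and $n\mid |R|$. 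As every subsequence of $\widehat{S}$ is $\widehat{R}$ for a unique $R\mid S$, a nonempty product-one subsequence of $\widehat{S}$ corresponds exactly to a nonempty product-one subsequence $T\mid S$ over $G$ whose length is divisible by $n$. This reduces the lemma to producing such a $T$ with $|T|\in\{n,2n\}$.

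First I would establish existence. Since $|\widehat{S}|=|S|\ge \mathsf{d}(G\times C_n)+1$, the definition of $\mathsf{d}(G\times C_n)$ forces $1\in \Pi(\widehat{S})$; by the correspondence above, the family $\mathcal{T}$ of nonempty product-one subsequences $T\mid S$ with $n\mid |T|$ is non-empty. Pick $T\in\mathcal{T}$ of minimal length, say $|T|=kn$ with $k\ge 1$. If $k\le 2$ we are done, so suppose $k\ge 3$. The idea is to delete a single term: choose any term $g$ of $T$ and set $T_0:=T\bdot g^{[-1]}$, so that $|T_0|=kn-1\ge 3n-1\ge \mathsf{d}(G\times C_n)+1$, where the last inequality is precisely the hypothesis $\mathsf{d}(G\times C_n)\le 3n-2$. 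Applying the same embedding to $T_0$, the sequence $\widehat{T_0}$ has a nonempty product-one subsequence, hence $T_0$ contains some $T'\in\mathcal{T}$. Since $T'\mid T_0\mid T$ we get $|T'|\le kn-1<kn=|T|$, contradicting the minimality of $|T|$. Therefore $k\le 2$ and $|T|\in\{n,2n\}$.

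The substantive point, and the step where the hypothesis is indispensable, is the passage from ``length divisible by $n$'' to ``length exactly $n$ or $2n$'': the embedding into $G\times C_n$ only guarantees a product-one subsequence of $S$ whose length is a multiple of $n$, and a priori this multiple could be arbitrarily large. The minimality-plus-deletion trick is what controls it, and it works exactly because $\mathsf{d}(G\times C_n)\le 3n-2$ ensures that after discarding one term from any candidate of length $\ge 3n$, the Davenport bound for $G\times C_n$ can be reinvoked to produce a strictly shorter member of $\mathcal{T}$. I expect the only routine verifications to be the boundary arithmetic (that $k\ge 3$ indeed gives $kn-1\ge \mathsf{d}(G\times C_n)+1$) and the coordinatewise splitting $\pi(\widehat{R})=(\pi(R),e^{|R|})$, both of which are elementary.
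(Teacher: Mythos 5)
Your proof is correct. Note that the paper does not actually prove this lemma --- it is quoted from Geroldinger--Halter-Koch \cite[Proposition 5.7.7]{GerolHalt} without proof --- so there is no in-paper argument to compare against; what you have written is essentially the standard argument behind that citation. The embedding $g\mapsto(g,e)$ into $G\times C_n$, under which product-one subsequences of $\widehat{S}$ correspond exactly to product-one subsequences of $S$ of length divisible by $n$, is the key step in both treatments. Where you diverge slightly from the textbook route is the endgame: the usual argument takes a \emph{minimal} product-one subsequence of $\widehat{S}$ and observes that its length is at most $\mathsf{D}(G\times C_n)=\mathsf{d}(G\times C_n)+1\le 3n-1$, hence (being a positive multiple of $n$) equals $n$ or $2n$; you instead take a shortest element of $\mathcal{T}$ and exclude $k\ge 3$ by deleting one term and reapplying the Davenport bound to the truncated sequence. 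Both are valid; your deletion trick has the minor advantage of not invoking the identity $\mathsf{D}=\mathsf{d}+1$ for abelian groups, at the cost of one extra application of the hypothesis. The boundary arithmetic ($kn-1\ge 3n-1\ge\mathsf{d}(G\times C_n)+1$ when $k\ge3$) and the bijection between subsequences of $S$ and of $\widehat{S}$ both check out.
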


\begin{lem}\emph{\cite[Theorem 1.4]{HanD}}\label{Han}
    Let $G$ be a finite nilpotent non-cyclic group of order $n$, and let
$p$ be the smallest prime divisor of $n$. Then
$$\mathsf{E}(G) \le n +\frac{n}{p} + p -2.$$ In particular, $\mathsf{E}(G)\le \frac{3n}{2}.$
\end{lem}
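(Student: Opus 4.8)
The plan is to reduce the asserted bound to the sharp additive identity $\mathsf{E}(G)\le \mathsf{d}(G)+|G|$ for finite nilpotent groups, and then to invoke Lemma \ref{dd}. Writing $n=|G|$, once $\mathsf{E}(G)\le \mathsf{d}(G)+n$ is known, Lemma \ref{dd} supplies $\mathsf{d}(G)\le \frac{n}{p}+p-2$, and adding $n$ gives exactly $\mathsf{E}(G)\le n+\frac{n}{p}+p-2$. Thus the entire weight of the first inequality sits in establishing $\mathsf{E}(G)\le \mathsf{d}(G)+n$ for nilpotent $G$ (the matching lower bound, not needed here, would follow by padding a product-one-free sequence with copies of $1$).

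For the core inequality I would imitate Gao's proof of $\mathsf{E}(G)=\mathsf{d}(G)+|G|$ in the abelian case and lift it to the nilpotent setting. Take $S=g_1\bdot\dots\bdot g_L$ with $L=\mathsf{d}(G)+n$ and embed it into $G\times C_n$ by $g_i\mapsto(g_i,1)$; any product-one subsequence of the embedded sequence has length divisible by $n$ in the $C_n$-coordinate and projects to a product-one subsequence of $S$ over $G$. Hence it suffices to force such a subsequence of length exactly $n$, which splits into (i) a Davenport-type estimate controlling $\mathsf{d}(G\times C_n)$ by $\mathsf{d}(G)+n-1$, so that the embedded sequence of length $\mathsf{d}(G)+n$ already admits a product-one subsequence, and (ii) a refinement guaranteeing that this subsequence can be taken of length in $\{n,2n\}$, together with an argument discarding the length-$2n$ case. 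For abelian $G$, part (ii) is exactly Lemma \ref{n,2n}. To reach the non-abelian nilpotent case I would use the Sylow decomposition $G=P_1\times\cdots\times P_k$ into coprime factors, reduce to $p$-groups via a Chinese-Remainder-type recombination of product-one subsequences, and then treat a $p$-group by induction on $|G|$ through a normal subgroup $N$ with $G/N\cong C_p$, extracting product-one subsequences modulo $N$ and recombining their $N$-values by means of the cyclic case $\mathsf{E}(C_p)=2p-1$.

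The main obstacle is sharpness. The naive induction—extract fixed-length blocks that are product-one modulo $N$ and then apply the cyclic/EGZ constant $\mathsf{E}(N)$ to their $N$-values—yields only $\mathsf{E}(G)\le \mathsf{E}(G/N)+(\mathsf{E}(N)-1)\,|G/N|$, which is tight for cyclic groups but overshoots for non-cyclic ones (for instance it gives $7$ rather than the correct $6$ for $C_2\times C_2$). Recovering the exact additive bound $\mathsf{d}(G)+n$ is where the real work lies: one must exploit the $\{n,2n\}$-length refinement and the specific $p$-group structure rather than the crude block count, and the technical heart is extending both that refinement and the Davenport estimate of step (i) from abelian groups, where Lemma \ref{n,2n} applies, to genuinely non-abelian nilpotent groups.

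Finally, the ``in particular'' clause is pure arithmetic. From $\mathsf{E}(G)\le n+\frac{n}{p}+p-2$ it remains to verify $\frac{n}{p}+p-2\le \frac{n}{2}$. For $p=2$ both sides equal $\frac{n}{2}$. For $p\ge 3$ the inequality rearranges to $(p-2)\bigl(1-\tfrac{n}{2p}\bigr)\le 0$, which holds because a non-cyclic nilpotent group has a non-cyclic Sylow subgroup and therefore satisfies $n\ge p^2\ge 2p$, whence $\frac{n}{2p}\ge 1$. This yields $\mathsf{E}(G)\le \frac{3n}{2}$ and completes the plan.
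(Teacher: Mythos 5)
This lemma is imported verbatim from Han's 2015 paper (\cite[Theorem~1.4]{HanD}); the paper under review gives no proof of it, so there is no internal argument to compare against. Judged on its own terms, your proposal has a genuine gap at its core. The reduction is sound: if $\mathsf{E}(G)\le \mathsf{d}(G)+n$ holds for nilpotent $G$, then Lemma~\ref{dd} gives $\mathsf{d}(G)\le \frac{n}{p}+p-2$ and the first inequality follows, and your arithmetic for the ``in particular'' clause is correct (for $p\ge 3$ a non-cyclic nilpotent group has a non-cyclic Sylow $q$-subgroup with $q\ge p$, so $n\ge q^2\ge p^2\ge 2p$, which is what $\frac{n}{p}+p-2\le\frac{n}{2}$ requires). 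But the inequality $\mathsf{E}(G)\le \mathsf{d}(G)+n$ for non-abelian nilpotent groups \emph{is} Han's theorem --- it is the entire content of the result --- and you do not prove it. You correctly diagnose why the obvious strategies fail: the naive induction through a normal subgroup $N$ with $G/N\cong C_p$ only yields $\mathsf{E}(G)\le \mathsf{E}(G/N)+(\mathsf{E}(N)-1)|G/N|$, which overshoots, and the two abelian ingredients you would need (the bound $\mathsf{d}(G\times C_n)\le \mathsf{d}(G)+n-1$ and the $\{n,2n\}$-length refinement of Lemma~\ref{n,2n}) are only available for abelian $G$. Identifying the obstruction is not the same as overcoming it; as written, the ``technical heart'' you point to is exactly the part that is missing.

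If you want a self-contained route, the honest options are either to cite Han's equality $\mathsf{E}(G)=\mathsf{d}(G)+|G|$ for finite nilpotent groups as a black box (which is what this paper effectively does by citing \cite{HanD}) and then run your reduction via Lemma~\ref{dd}, or to reproduce Han's actual induction, which works through the Sylow decomposition and a careful class-by-class argument rather than the crude block count. Note also a historical (not logical) wrinkle in your reduction: Lemma~\ref{dd} postdates Han's paper, so Han's own Theorem~1.4 cannot have used it; there is no circularity, since the Qu--Li--Teeuwsen bound on $\mathsf{d}(G)$ is proved independently of $\mathsf{E}(G)$, but it is worth being explicit that you are assembling the statement from two separate deep inputs rather than deriving it from scratch.
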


\begin{lem}\emph{\cite[Lemma 7]{GaoLu2008}}\label{pi_n-2}
    Let $G$ be a finite abelian group of order $n$, $r \ge 2$ an integer, and $S$ a sequence of $n + r - 2$ elements in $G$. If $1 \notin \Pi_{n}(S)$, then $|\Pi_{n-2}(S)|=|\Pi_{r}(S)| \ge r-1$.
\end{lem}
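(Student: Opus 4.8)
The plan is to pass to additive notation: write $G$ additively, let $\sigma(T)$ denote the (well-defined, since $G$ is abelian) sum of a subsequence $T$, and set $\Sigma_k(S):=\{\sigma(T):T\mid S,\ |T|=k\}$, so $\Pi_k(S)=\Sigma_k(S)$; the hypothesis becomes $0\notin\Sigma_n(S)$ and the goal is $|\Sigma_{n-2}(S)|=|\Sigma_r(S)|\ge r-1$. The equality is the formal part: since $|S|=n+r-2$, complementation $T\mapsto S\bdot T^{[-1]}$ is a bijection between the $r$-element and the $(n-2)$-element subsequences of $S$, and $\sigma(T)+\sigma(S\bdot T^{[-1]})=\sigma(S)$ gives $\Sigma_{n-2}(S)=\sigma(S)-\Sigma_r(S)$; hence the two sets have equal size and it suffices to bound $|\Sigma_r(S)|$ from below. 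Two reductions help. First, no value occurs $n$ times: if $\mathsf{v}_g(S)\ge n$ then, as $\mathrm{ord}(g)\mid|G|=n$, the block $g^{[n]}$ is a zero-sum subsequence of length $n$, contradicting the hypothesis; thus $\mathsf{v}_g(S)\le n-1$ for all $g$. Second, the hypothesis is satisfiable only when $r\le n$: a sequence of length $\ge\mathsf{E}(G)=\mathsf{d}(G)+n$ has a zero-sum subsequence of length $n$, so $0\notin\Sigma_n(S)$ forces $n+r-2\le\mathsf{d}(G)+n-1$ and hence $r\le\mathsf{d}(G)+1\le n$.

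To bound $|\Sigma_r(S)|$ I would invoke the DeVos--Goddyn--Mohar theorem: if $H$ is the stabiliser of $\Sigma_r(S)$ and $\bar S$ is the image of $S$ in $\bar G=G/H$, then
$$|\Sigma_r(S)|\ \ge\ |H|\Big(1-r+\sum_{\bar g\in\bar G}\min\{\,r,\ \mathsf{v}_{\bar g}(\bar S)\,\}\Big),$$
where $\mathsf{v}_{\bar g}(\bar S)$ counts the terms of $S$ in the coset $\bar g$. In the principal case $H=\{0\}$ this reads $|\Sigma_r(S)|\ge 1-r+\sum_{g\in G}\min\{r,\mathsf{v}_g(S)\}$, and the two reductions close the argument at once: among sequences of total length $n+r-2$ with all multiplicities at most $n-1$ and $r\le n$, an elementary optimisation gives $\sum_{g}\min\{r,\mathsf{v}_g(S)\}\ge 2r-2$ (the minimum being attained by one value of multiplicity $n-1$ padded with $r-1$ further terms, or, when $r=n$, by two values of multiplicity $n-1$). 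Substituting yields $|\Sigma_r(S)|\ge(2r-2)-(r-1)=r-1$.

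The genuine difficulty is the case of a nontrivial stabiliser $H$, where $\Sigma_r(S)$ is a union of $H$-cosets and the crude multiplicity cap is insufficient: a whole coset $C$ may carry up to $\mathsf{v}_C(S)\le\mathsf{d}(H)+n-1$ terms of $S$ without producing a length-$n$ zero-sum, the bound coming from extracting $n/|H|$ disjoint zero-sum blocks of length $|H|$ inside $C$ via $\mathsf{E}(H)=\mathsf{d}(H)+|H|$, each block summing to $0$ in $G$ because $\mathrm{ord}(g)\mid n$ kills the coset representative. I expect verifying that the displayed bound still returns $\ge r-1$ under this weaker coset cap, by a careful analysis in the quotient $\bar G$, to be the main obstacle.

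A more self-contained route, which I would use to power the periodic case, is a minimal counterexample: take $S$ with $r$ (then $n$) least and $|\Sigma_r(S)|\le r-2$. Deleting any value $a$ yields $S\bdot a^{[-1]}$ of length $n+(r-1)-2$ with $0\notin\Sigma_n$, so minimality gives $|\Sigma_{r-1}(S\bdot a^{[-1]})|\ge r-2$, while $a+\Sigma_{r-1}(S\bdot a^{[-1]})\subseteq\Sigma_r(S)$ forces $\Sigma_r(S)=a+\Sigma_{r-1}(S\bdot a^{[-1]})$ of size exactly $r-2$ for every value $a$; comparing two values makes these sets mutual translates, and Kneser's theorem then exhibits $\Sigma_r(S)$ as a near-coset of a subgroup and concentrates $S$ onto few cosets of it. This inverse step is the crux; once it is in hand, the contradiction comes out exactly as in the transparent two-value model $S=a^{[p]}\bdot b^{[q]}$ with $p+q=n+r-2$ and $p,q\le n-1$. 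There $\Sigma_r(S)=rb+\{\,i\,d:i\in I_r\,\}$ with $d=a-b$ and $I_r$ an interval of admissible counts of length $\ge r-1$, so $|\Sigma_r(S)|=\min\{|I_r|,\ \mathrm{ord}(d)\}$ and the bound $|\Sigma_r(S)|\le r-2$ forces $e:=\mathrm{ord}(d)\le r-2$; but the length-$n$ subsequences correspond to an interval $I_n$ of length $p+q-n+1=r-1\ge e$ with sums $i\,d$, and any $e$ consecutive integers contain a multiple of $e$, yielding an admissible $i$ with $i\,d=0$, i.e. a zero-sum subsequence of length $n$ — the desired contradiction. Reducing the general periodic configuration to this two-value mechanism is where I expect the remaining work to lie.
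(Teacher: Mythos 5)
The paper does not actually prove this lemma --- it is imported verbatim as \cite[Lemma 7]{GaoLu2008} --- so there is no internal argument to compare yours against; I can only judge the proposal on its own terms. What you do write is sound as far as it goes: the complementation bijection giving $|\Pi_{n-2}(S)|=|\Pi_r(S)|$, the multiplicity cap $\mathsf{v}_g(S)\le n-1$, the deduction $r\le\mathsf{d}(G)+1\le n$ from Gao's theorem $\mathsf{E}(G)=\mathsf{d}(G)+n$, and the aperiodic case of the DeVos--Goddyn--Mohar bound (where the cap and $r\le n$ do force $\sum_g\min\{r,\mathsf{v}_g(S)\}\ge 2r-2$, hence $|\Pi_r(S)|\ge r-1$) are all correct.

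The genuine gap is exactly where you flag it: the periodic case $H=\mathrm{Stab}(\Pi_r(S))\neq\{1\}$ is announced as ``the main obstacle'' and never carried out, and the alternative minimal-counterexample/Kneser route is even less finished --- the ``inverse step'' you call the crux is essentially the whole content of that approach. As submitted, this is therefore not a proof. That said, the periodic case does close with the ingredients you already assembled, and more easily than you seem to expect. Write $m=|H|$. If $m\ge r-1$ you are done, since $\Pi_r(S)$ is a nonempty union of $H$-cosets. Otherwise $2\le m\le r-2$; your coset cap (correctly justified via extracting $n/m$ disjoint blocks whose $H$-components are product-one, the coset representative dying only in the total product of length $n$) gives $\mathsf{v}_Q(S)\le\mathsf{d}(H)+n-1\le n+m-2$ for every coset $Q$. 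Since $2(n+m-2)>n+r-2$ when $r\le n$ and $m\ge 2$, at most one coset can be saturated, so minimising the concave sum over the truncated simplex gives $\sum_Q\min\{r,\mathsf{v}_Q(S)\}\ge r+(r-m)=2r-m$, and the DGM bound returns $|\Pi_r(S)|\ge m(r-m+1)\ge r-1$ for all $2\le m\le r-2$. Supplying that half page would turn the first route into a complete (if heavier than the original's) proof; the second route should be dropped rather than repaired.
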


\begin{lem}\emph{\cite[Lemma 7]{BAS2007}}\label{disjointsequences}
    Let $S$ be a sequence over an abelian group $G$ of order $n$. For any integer $k$ such that
$2^k >n$, there exist subsequences $U,V \mid S$ such that\\
\no \emph{(i)} $U$ and $V$ are disjoint sequences.\\
     \emph{(ii)} $|S| - \big(|U| + |V|\big) \le k - 1$, and\\
     \emph{(iii)} $\pi(U)=\pi(V)$.

\end{lem}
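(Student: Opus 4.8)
The plan is to construct $U$ and $V$ by a greedy iterative procedure that maintains two invariants throughout: $U$ and $V$ are disjoint subsequences of $S$ with $\pi(U)=\pi(V)$. I would start from $U=V=$ the empty sequence (so that $\pi(U)=\pi(V)=1$ trivially, and disjointness is vacuous) and repeatedly enlarge $U$ and $V$ by absorbing terms from the ``free'' pool $F:=S\bdot(U\bdot V)^{[-1]}$ of terms not yet used, stopping precisely when $|F|\le k-1$. Since each step strictly decreases $|F|$ and $F$ is finite, the procedure terminates; at termination $|S|-(|U|+|V|)=|F|\le k-1$, which is exactly condition (ii), while conditions (i) and (iii) are the maintained invariants. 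In particular, if $|S|\le k-1$ the loop never runs and the empty choice already works.

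The engine of each step is a pigeonhole argument powered by the hypothesis $2^k>n$. Suppose at some stage $|F|\ge k$, and fix any $k$ terms of $F$, say $f_1\bdot\dots\bdot f_k$. Since $G$ is abelian, each subset $I\subseteq[1,k]$ has a well-defined product $P(I):=\prod_{i\in I}f_i\in G$. There are $2^k>n=|G|$ such subsets, so two distinct subsets $A\ne B$ must satisfy $P(A)=P(B)$. Cancelling the contribution of the common part $A\cap B$ in this abelian group, the disjoint index sets $A\setminus B$ and $B\setminus A$ satisfy $P(A\setminus B)=P(B\setminus A)$; writing $U'$ and $V'$ for the corresponding subsequences of $F$, these are disjoint subsequences with $\pi(U')=\pi(V')$, and at least one of them is non-trivial because $A\ne B$. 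I would then append $U'$ to $U$ and $V'$ to $V$; since $G$ is abelian, $\pi(U\bdot U')=\pi(U)\,\pi(U')=\pi(V)\,\pi(V')=\pi(V\bdot V')$, so the invariant $\pi(U)=\pi(V)$ persists, while disjointness is preserved because only free terms are moved.

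The one point requiring care --- and the step I expect to be the main obstacle --- is the degenerate case of the pigeonhole, where $A\subsetneq B$ (or vice versa), so that one of $A\setminus B$, $B\setminus A$ is empty and the gain cannot be split symmetrically between $U$ and $V$. In that case the non-empty side, say the subsequence $W$ corresponding to $B\setminus A$, satisfies $\pi(W)=1$, i.e.\ $W$ is a non-trivial product-one subsequence of $F$. I would simply append such a $W$ entirely to $U$ (or to $V$), which keeps the invariant intact since $\pi(U\bdot W)=\pi(U)\cdot 1=\pi(U)=\pi(V)$. Thus in either case at least one free term is consumed and $|F|$ strictly drops, guaranteeing progress whenever $|F|\ge k$.

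Putting these together, the procedure halts with $|F|\le k-1$ and outputs disjoint subsequences $U,V\mid S$ with $\pi(U)=\pi(V)$ and $|S|-(|U|+|V|)\le k-1$, establishing (i)--(iii). The only nontrivial input is the counting inequality $2^k>n$, invoked once per step to force a repeated subset product; everything else is bookkeeping about products in an abelian group and the termination of a strictly decreasing finite quantity.
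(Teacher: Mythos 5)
Your proof is correct: the iterative pigeonhole construction (repeatedly extracting, from any $k$ free terms, two index sets with equal subset products via $2^k>n=|G|$, cancelling the overlap, and absorbing the resulting disjoint equal-product pieces into $U$ and $V$ until fewer than $k$ terms remain) is essentially the original argument of Bass for this lemma. The paper itself only cites \cite[Lemma~7]{BAS2007} without reproducing a proof, so there is nothing further to compare against; your handling of the degenerate case $A\subsetneq B$ is also fine (and in fact needs no special treatment, since appending a product-one block to either side preserves the invariant $\pi(U)=\pi(V)$).
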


\begin{lem}\emph{\cite[Lemma 2.2]{Nathanson1996}}\label{Nathanson}
    Let $A, B$ be two subsets of a finite abelian group $G$. If $|A| + |B| > |G|$, then $$A + B:=\{ab ~|~a \in A, b \in B\} = G.$$
\end{lem}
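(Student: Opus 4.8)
The plan is to verify the conclusion $AB = G$ elementwise: I would fix an arbitrary $g \in G$ and show that $g$ can be written as a product $ab$ with $a \in A$ and $b \in B$. Since $AB \subseteq G$ holds trivially, this suffices. The mechanism is a translation-plus-pigeonhole argument: instead of comparing $A$ and $B$ directly, I compare $A$ against a suitable translate of $B$ chosen so that a nonempty intersection encodes exactly a factorization of $g$.

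Concretely, first I would introduce the translated set $gB^{-1} := \{\, g b^{-1} : b \in B \,\}$. The key preliminary observation is that this set has exactly $|B|$ elements, because the map $b \mapsto g b^{-1}$ is a composition of the inversion map and left translation by $g$, both of which are bijections of the finite group $G$. Next, I would note that $A$ and $gB^{-1}$ are both subsets of $G$ with $|A| + |gB^{-1}| = |A| + |B| > |G|$. By inclusion–exclusion, $|A \cap gB^{-1}| = |A| + |gB^{-1}| - |A \cup gB^{-1}| \ge |A| + |B| - |G| > 0$, so the two sets meet. Finally, choosing $x \in A \cap gB^{-1}$ and writing $x = g b^{-1}$ for some $b \in B$ yields $g = xb$ with $x \in A$ and $b \in B$, that is, $g \in AB$. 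As $g$ was arbitrary, $AB = G$.

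There is no real obstacle here; the only point requiring care is the cardinality claim $|gB^{-1}| = |B|$, which rests on the fact that translations and inversion are bijections (so no collisions occur), and the pigeonhole step, which uses only that a union of two subsets of $G$ cannot exceed $|G|$ in size. I note that commutativity of $G$ is not actually needed for this argument — it goes through in any finite group — but since the statement is phrased for an abelian $G$, I would present it in that setting, using the hypothesis $|A| + |B| > |G|$ solely through the intersection inequality above.
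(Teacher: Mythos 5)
Your proof is correct and is exactly the standard pigeonhole-via-translation argument; the paper itself gives no proof (it cites Nathanson, Lemma 2.2), and the cited proof is this same argument of intersecting $A$ with $gB^{-1}$. Your side remark that commutativity is not needed is also accurate.
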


\section{Proof of Theorem \ref{Theoremd(G)} and Theorem \ref{TheoremE(G)}}\label{SectionTheoremproof1.11.3}

For a finite abelian group $A$ of rank $r,$ there exists $n_i\in \mathbb{N}$ for $i\in [1,r]$ such that
$$A \cong C_{n_1}\times C_{n_2}\times \dotsc \times C_{n_r}$$ with $n_i \mid n_{i+1}$ for $i\in [1,r-1].$ We consider the group $G=A \rtimes_{-1} C_2$, then
$$G \cong \langle g_1,g_2,\dotsc g_r,h \mid g_1^{n_1}=g_2^{n_2}=\dotsc =g_{r}^{n_r}=1, \; h^2=1, \; hg_ih=g_i^{n_i-1}, \; g_ig_j=g_jg_i \rangle.$$
Let us define the set $H:=G \setminus A$, i.e., $H=\{g_1^{i_1}g_2^{i_2}\dotsc g_r^{i_r}h \mid i_k \in [0, n_k-1] \; \text{for} \; k \in [1,r]\}$. Note that $c_1c_2 \in A$ for any $c_1, c_2 \in H$, and every element of the set $H$ has order $2$.

\subsection*{Proof of Theorem \ref{Theoremd(G)}}

Clearly, for a finite abelian group $A$, there exists a sequence $S$ over $A$ of length $\mathsf{d}(A)$ that is product-one free. Consider the sequence $T=S\bdot h$ over $G$, then $ |T|=\mathsf{d}(A)+1$. Then $T$ is also a product-one free sequence as $h$ can not be expressed as an element of $A.$

This indicates that $\mathsf{d}(A)+1 \le \mathsf{d}(G)$, and by Lemma \ref{d(G)D_o(G)}, we derive that $\mathsf{d}(A)+2 \le \mathsf{d}(G)+1 \le \mathsf{D}_o(G)$.

\no Now, it is enough to show that $\mathsf{D}_o(G) \le \mathsf{d}(A)+2.$ So, we consider a sequence $S^{\ast} \in \mathcal{F}^{\ast}(G)$ of length $\mathsf{d}(A)+2$. 

\no \textbf{Case (i) :} If all the terms of $S^{\ast}$ are from $A$. From the definition of $\mathsf{d}(A),$ it follows that $S^{\ast}$ contains a non-trivial product-one ordered subsequence.

\no \textbf{Case (ii) :} Assume that all terms of $S^{\ast}$ lie in $H$. Write 
\[
S^{\ast}
= c_1 \bdot c_2 \bdot \dotsb \bdot c_{\mathsf{d}(A)+1} \bdot c_{\mathsf{d}(A)+2},
\qquad c_i \in H.
\]
We associate to $S^{\ast}$ the ordered sequence $T^{\ast} \in \mathcal{F}^{\ast}(A)$ defined by 
\[
T^{\ast}
= (c_1c_2) \bdot (c_2c_3) \bdot \dotsb \bdot (c_{\mathsf{d}(A)}c_{\mathsf{d}(A)+1}) 
 \bdot (c_{\mathsf{d}(A)+1}c_{\mathsf{d}(A)+2}),
\]
so that $|T^{\ast}|=\mathsf{d}(A)+1$. Since $\mathsf{D}_o(A)=\mathsf{d}(A)+1$ by Lemma~\ref{d(G)D_o(G)}, the sequence $T^{\ast}$ contains a non-trivial product-one ordered subsequence. Denote such a subsequence by
\[
(T')^{\ast}
= (c_{j_1}c_{j_2}) \bdot (c_{j_3}c_{j_4}) \bdot \dotsb \bdot (c_{j_k}c_{j_{k+1}}).
\]

\noindent
Since every $c_{j_i} \in H$ has order $2$, any adjacent overlapping pairs of the form
\[
(c_{j_i}c_{j_\ell}) \bdot (c_{j_\ell}c_{j_m}) \;\mid\; (T')^{\ast}
\]
may be replaced by the ordered subsequence
\[
c_{j_i} \bdot c_{j_m} \;\mid\; S^{\ast}.
\]
Iterating this cancellation yields an ordered subsequence $(T^{\prime\prime})^{\ast} \mid S^{\ast}$ such that 
\[
\pi\bigl((T^{\prime\prime})^{\ast}\bigr)
= \pi\bigl((T')^{\ast}\bigr)
= 1.
\]
Thus $S^{\ast}$ contains a non-trivial product-one ordered subsequence in this case as well.

\smallskip
    
\no \textbf{Case (iii) :} If $S^{\ast}$ contains terms from both $H$ and $A$, write
\[
S^{\ast}(H)=c_1\bdot c_2\bdot\cdots\bdot c_u,\qquad
S^{\ast}(A)=d_1\bdot d_2\bdot\cdots\bdot d_v,
\]
with $u+v=\lvert S^{\ast}\rvert=\mathsf d(A)+2$.  Without loss of generality, assume that $c_i$ are distinct. Write $S^{\ast}$ as
\[
S^{\ast}=l^{\ast}_0\bdot c_1\bdot l^{\ast}_1\bdot c_2\bdot\cdots\bdot c_u\bdot l^{\ast}_u,
\]
where each $l^{\ast}_i$ is an  ordered subsequence of $S^{\ast}(A)$ of the form
\[
l^{\ast}_i=d_1^{e_{i1}}\bdot d_2^{e_{i2}}\bdot\cdots\bdot d_v^{e_{iv}},\qquad e_{ij}\in\{0,1\}.
\] 
Here, any $d_j$ with $e_{ij}=0$ is omitted from $l_i^{\ast}$, so $l_i^{\ast}$ consists precisely of those $d_j$ for which $e_{ij}=1$. If all $e_{ij}=0$ for a given $i$, then $l_i^{\ast}$ is the trivial sequence and contributes nothing at that position in $S^{\ast}$.

The subsequences $l_0^{\ast},\dots,l_u^{\ast}$ are pairwise disjoint, meaning that each $d_j$ appears in exactly one of these subsequences, and no $d_j$ is repeated across different $l_i^{\ast}$. Moreover, $\sum_{i=0}^u|l^{\ast}_i|=v$. Define the ordered sequence $L^{\ast}\in\mathcal F^{\ast}(A)$ by
\[
L^{\ast}
= l^{\ast}_0\bdot (c_1\pi(l^{\ast}_1)c_2)\bdot (c_2\pi(l^{\ast}_2)c_3)\bdot\cdots\bdot (c_{u-1}\pi(l^{\ast}_{u-1})c_u)\bdot l^{\ast}_u\bdot l^{\ast}_1\bdot\cdots\bdot l^{\ast}_{u-1},
\]
where each block \((c_i\pi(l^{\ast}_i)c_{i+1})\) is regarded as a single element of \(A\) (if \(l_i^{\ast}\) is trivial, then this block reduces to \((c_ic_{i+1})\)). Expanding the final concatenation shows
\[
|L^{\ast}| = \sum_{i=0}^u |l_i^{\ast}| + (u-1) = v + (u-1) = u+v-1 = \mathsf d(A)+1.
\]
Since $A$ is abelian, we have $\mathsf D_o(A)=\mathsf d(A)+1$, so any ordered sequence of length $\mathsf d(A)+1$ over $A$ contains a non-trivial product-one ordered subsequence. Hence there exists $(L')^{\ast}\mid L^{\ast}$ with $\pi((L')^{\ast})=1$. We now lift $(L')^{\ast}$ to an ordered subsequence $(L'')^{\ast}\mid S^{\ast}$ as follows. If $(c_ic_{i+1})\mid (L')^{\ast}$ then include the ordered pair $c_i\bdot c_{i+1}$ in $(L'')^{\ast}$.  If
\[
(c_i d_j d_{j+1}\cdots d_{j+k} c_{i+1})\bdot d_l\bdot d_{l+1}\bdot\cdots\bdot d_{l+s}\mid (L')^{\ast}
\]
with $J=[j,j+k]\supseteq I=[l,l+s]$, then include in $(L'')^{\ast}$ the ordered block
\[
c_i\bdot\biggl(\prod_{t\in J\setminus I} d_t\biggr)\bdot c_{i+1},
\]
where the product $\prod_{t\in J\setminus I} d_t$ is taken in the natural order; those $d_t$ appear in $S^{\ast}$ (in case \(J \setminus I\) is empty, the corresponding lifted block is simply \(c_i \bdot c_{i+1}\)). Because $cd=d^{-1}c$ for $c\in H,d\in A$ and $c^2=1$ for $c\in H$, the ordered product of the lifted sequence $(L'')^{\ast}$ equals the product of $(L')^{\ast}$.  Since $\pi((L')^{\ast})=1$, we obtain $\pi((L'')^{\ast})=1$, as required.

\no Moreover from Lemma \ref{beta}, we have $\mathsf{D}_o(G)=\beta (G).$ 
\qed

\subsection*{Proof of Theorem \ref{TheoremE(G)}}
Let $A$ be a finite abelian $p$-group with order $p^{\alpha},$ where $\alpha \in \mathbb{N}.$ Then, $A\cong C_{p^{\alpha_1}}\times C_{p^{\alpha_2}}\times \dotsc \times C_{p^{\alpha_r}},$ where $1 \le \alpha_1 \le \dotsc \le \alpha_r$, and $\alpha=\sum_{i=1}^r\alpha_i.$ Say, $C_{p^{\alpha_i}}= \langle g_i \rangle$ for each $i$, then $G=\langle g_1,\cdots,g_r,h \rangle$ where $h^2=1.$
Set $H:=G \setminus A$.
We want to show that $\mathsf{E}(G) \le \frac{3}{2}|G| $ for $G= A \rtimes_{-1} C_2.$ For $p=2$, the result follows from Lemma \ref{Han}. For $r=1$, the group $G$ is the dihedral group of order $2p^{\alpha_1}$, and the result follows from \cite[Theorem~8]{BAS2007}, so we henceforth assume that $p$ is an odd prime and $r \ge 2$.

\no Let $S$ be a sequence over $G$ of length $3p^{\alpha}$.  We consider
    \begin{align*}
    S(A)=(b_1 \bdot b_1) \bdot (b_2 \bdot b_2)\bdot \dotsc \bdot (b_t \bdot b_t)\bdot d_1\bdot \dotsc \bdot d_v,\\
S(H)=(a_1 \bdot a_1) \bdot (a_2 \bdot a_2)\bdot \dotsc \bdot (a_q \bdot a_q)\bdot c_1\bdot \dotsc \bdot c_u,
\end{align*} where $d_i$'s and $c_i$'s are distinct. Since $|S|=3p^{\alpha}=2q+2t+u+v$, and $u,v \le p^{\alpha}$, we have $2q+2t \ge p^{\alpha}$. We divide the proof into two cases:

\noindent {\bf Case 1:}  Let us assume that $q=0.$ Therefore, $|S(A)|=2t+v \ge 2p^{\alpha}$. From \cite{Olson1969I}, we have $\mathsf{E}(A)=p^{\alpha}+\sum_{i=1}^r p^{\alpha_i}-r \le 2p^{\alpha}-1$. So, there exists a subsequence $T_1 \mid S(A) \text{ such that } 1 \in \pi(T_1) \text{ and } |T_1|=p^{\alpha}.$

\no Now, consider the sequence $S^{\prime}=S(A) \bdot T_1^{[-1]}$ over $A$ of length $|S^{\prime}| \ge p^{\alpha}$. If $u \le 1$, then $|S(A)| \ge 3p^{\alpha}-1.$ Consequently, $|S^{\prime}| \ge 2p^{\alpha}-1 \ge \mathsf{E}(A).$ Thus, there exists a subsequence $T_2 \mid S^{\prime} \text{ such that } 1 \in \pi(T_2) \text{ and }   |T_2|=p^{\alpha}.$ Hence, we have a product-one subsequence $T_1 \bdot T_2 \mid S(A)$ of length $2p^{\alpha}.$

\no Now, assume $u\in [2, p^{\alpha}]$ and $1 \notin \Pi_{p^{\alpha}}(S^{\prime})$. Since $c_1c_i \neq c_1c_j$ for $i \neq j$, where $i,j\in [1, u]$, we have $$\big|\Pi_2(c_1 \bdot c_2\bdot \dotsc \bdot c_u)\big| \ge|\{c_1c_2,c_1c_3, \dotsc ,c_1c_u\}| = u-1.$$

\noindent \textbf{Subcase 1A:} For $\big|\Pi_2(c_1 \bdot c_2\bdot \dotsc \bdot c_u)\big| = u-1$.
For $i,j \in [1,u]$, we write $c_i=g_1^{k_{1i}}g_2^{k_{2i}}\dotsc g_r^{k_{ri}}h$ and $c_j=g_1^{k_{1j}}g_2^{k_{2j}}\dotsc g_r^{k_{rj}}h$. Since $h^2=1$, then 
$c_ic_j=g_1^{k_{1i}-k_{1j}}g_2^{k_{2i}-k_{2j}}\dotsc g_r^{k_{ri}-k_{rj}}.$
Now, consider the set $\{c_1c_2,c_1c_3,\dotsc,c_1c_u \}$ of $u-1$ elements. Rewriting these elements, we have:
 $$\{c_1c_2,c_1c_3,\dotsc,c_1c_u \}=\Big\{ \prod_{i=1}^rg_i^{k_{i1}-k_{i2}},\cdots, \prod_{i=1}^rg_i^{k_{i1}-k_{iu}}\Big\}.$$

\no Similarly, the set $\{c_2c_1,c_2c_3,\dotsc,c_2c_u \}=\Big\{ \prod_{i=1}^rg_i^{k_{i2}-k_{i1}},\cdots, \prod_{i=1}^rg_i^{k_{i2}-k_{iu}}
\Big\}$ contains $u-1$ distinct elements. Hence, both sets are equal. Therefore,

$$uk_{m1} \equiv uk_{m2}\ (\textrm{mod}\ p^{\alpha_m})\quad\text{for all}\; m \in [1, r].$$
More generally, $uk_{ms} \equiv uk_{ml}\ (\textrm{mod}\ p^{\alpha_m}) \;\text{for all}\; m\in [1,r],\;\text{and } s,l\in [1,u].$

\no Note that since $ u \in [2,p^{\alpha}]$, it follows that $\gcd(p,u) \in \{1,p\}$.

\no {\bf Subcase 1A (i):} Assume $\gcd(p,u)=p$. Then $u \in \{p,p^2,\dotsc,p^{\alpha}\}$. 

\no If $u=p^j$ for some $ j \in [1,\alpha-1]$, then $|S^{\prime}|= 2p^{\alpha}-p^j.$ 
We first show that $|S'| \ge 1 + \mathsf{d}(A \times C_{p^{\alpha}})$. This reduces to verifying that $p^{\alpha} - p^{\alpha-1} \ge r p^{\alpha_r} - r$, or equivalently that $p^{\alpha} - p^{\alpha-1} - r p^{\alpha_r} \ge -r$. Since $p^{\alpha} - p^{\alpha-1} = p^{\alpha_r}(p^{\alpha-\alpha_r} - p^{\alpha-\alpha_r-1})$, it is enough to check that $p^{\alpha-\alpha_r} - p^{\alpha-\alpha_r-1} - r \ge 0$. Noting that $p^{\alpha-\alpha_r-1}(p-1) \ge p^{r-2}(p-1) \ge 2p^{r-2} \ge 2\cdot 3^{r-2} \ge r$ for all $p \ge 3$ and $r \ge 2$, the inequality follows, and hence $|S'| \ge 1 + \mathsf{d}(A \times C_{p^{\alpha}})$.

As $\mathsf{d}(A \times C_{p^{\alpha}})\le 3p^{\alpha}-2$, by Lemma \ref{n,2n}, we can say that there exists a subsequence $T_2 \mid S^{\prime} \text{ such that } 1\in \pi(T_2) \text{ and } |T_2|= p^{\alpha}.$ Therefore, $T_1 \bdot T_2$ is a product-one subsequence of $S$ of length $2p^{\alpha}$.

\no For $u=p^{\alpha}$, we have $|S^{\prime}|=p^{\alpha}$. Then by Lemma \ref{pi_n-2}, we have 
$$\big|\Pi_{p^{\alpha}-2}(S^{\prime})\big|=\big|\Pi_{2}(S^{\prime})\big| \ge 1.$$
If $\big|\Pi_{p^{\alpha}-2}(S^{\prime})\big|=\big|\Pi_{2}(S^{\prime})\big| > 1$, we are done. However, if $\big|\Pi_{p^{\alpha}-2}(S^{\prime})\big|=\big|\Pi_{2}(S^{\prime})\big|= 1$, then all $p^{\alpha}$ elements of $S^{\prime}$ are the same, which contradicts the condition $1 \notin \Pi_{p^{\alpha}}(S^{\prime})$.

\no {\bf Subcase 1A (ii):} $\gcd(p,u)=1$.

\no In this case, we have $k_{ms} \equiv k_{ml}\ (\textrm{mod}\ p^{\alpha_m})\;\text{for all}\; m \in [1, r],\; s,l \in [1, u]$. This implies $c_sc_l=1 \;\text{for all}\;  s\neq l$ , a contradiction to the subsequence $S(H)$.

\noindent \textbf{Subcase 1B:} For $\big|\Pi_2(c_1 \bdot c_2\bdot \dotsc \bdot c_u)\big| \ge u$.

\no Since $S^{\prime}$ is a sequence over $A$ with $|S^{\prime}|=2p^{\alpha}-u.$ Using Lemma \ref{pi_n-2}, it follows that
$$\big|\Pi_{p^{\alpha}-2}(S^{\prime})\big| = \big|\Pi_{p^{\alpha}-u+2}(S^{\prime})\big| \ge p^{\alpha}-u+1.$$ Hence, by Lemma \ref{Nathanson}, we obtain a product-one subsequence of $S$ of length $2p^{\alpha}$.

\no {\bf Case 2:} Let us assume that $q \geq 1.$ If $(u,v)=(0,0)$, then $|S|=2q+2t=3p^{\alpha}$, a contradiction. 

\no So, one of $u$ and $v$ is non-zero. Assume $v=0$ and $u$ is non-zero. Since $|S|=3p^{\alpha}=2q+2t+u$, implies $u$ is an odd integer. We can choose integers $q^{\prime} \in [1,q]$ and $t^{\prime} \in [0,t]$ such that $2q^{\prime}+2t^{\prime}=2p^{\alpha}$. Then consider the subsequence $T$ of $S$ as follows:
$$T=(b_1 \bdot b_2 \bdot \dotsc \bdot b_{t^{\prime}})\bdot a_1 \bdot (b_1 \bdot b_2 \bdot \dotsc \bdot b_{t^{\prime}})\bdot a_1 \bdot (a_2\bdot a_2) \bdot (a_3\bdot a_3)\bdot (a_{q^{\prime}}\bdot a_{q^{\prime}}).$$ Clearly, $|T|=2p^{\alpha}$ and $ \pi(T^{\ast}) =1. $ Similar conclusion can be drawn if $u=0$ and $v$ is non-zero.

\no Now, assume that both $u$ and $v$ are non-zero. Since $u+v+2q+2t=3p^{\alpha}$, both $u$ and $v$ must have different parities. W.l.o.g., we assume that $u=2l$ and $v=2k+1$ for some $l,k \in \mathbb{Z}$.
This implies $2l+2k=3p^{\alpha}-2q-2t-1=u+v-1.$ Consider the following two sequences $C$ and $D$ over $A$ such that  $|C|=l$ and $|D|=k$.
 \begin{align*}
     C&=(c_1  c_2) \bdot (c_3  c_4)\bdot \dotsc \bdot (c_{2l-1}  c_{2l}),\\
     D&=(d_1  d_2) \bdot (d_3  d_4)\bdot \dotsc \bdot (d_{2k-1}  d_{2k}).
 \end{align*}
 
\no Since $2^{\lceil \frac{p^{\alpha}}{2} \rceil} > p^{\alpha}$, by applying Lemma \ref{disjointsequences} to the sequence $C\bdot D$, we have two disjoint subsequences $U,V \mid C\bdot D$ such that $\pi(U)=\pi(V)$ and 
 $$l+k-|U|-|V| \le \bigg\lceil \frac{p^{\alpha}}{2} \bigg\rceil -1.$$
\no Note that $l+k+1-\bigg\lceil \frac{p^{\alpha}}{2} \bigg\rceil \le |U|+|V| \le |C \bdot D|=l+k.$ Thus
\begin{align*}
    3p^{\alpha}-2q-2t+1-2 \bigg\lceil \frac{p^{\alpha}}{2} \bigg\rceil 
    &\le 2|U|+ 2|V|\\
    &\le 2l+2k\\
    &= u+v-1\\
    &\le 2p^{\alpha}-1.
\end{align*}
Since $1-2\lceil \frac{p^{\alpha}}{2} \rceil=-p^{\alpha}$, we obtain
$2|U|+ 2|V|+2q+2t \ge 2p^{\alpha}.$ So, we choose $q^{\prime} \in [1, q]$ and $ t^{\prime} \in [0,t]$ such that $2|U|+ 2 |V|+2q^{\prime}+2t^{\prime}=2p^{\alpha}$. Since $U \mid (C\bdot D)$, so there is a corresponding subsequence $U^{\prime} \mid S$, such that if $(c_ic_{i+1}) \mid U$, then $ c_i \bdot c_{i+1} \mid U^{\prime}$. Similarly, if $(d_id_{i+1}) \mid U$, then $ d_i \bdot d_{i+1} \mid U^{\prime}$. Likewise, for the subsequence $V \mid (C\bdot D)$, there is a corresponding subsequence $V^{\prime} \mid S$. 
Thus, $|U^{\prime}|=2|U|$ and $|V^{\prime}|=2|V|$, resulting in $|U^{\prime}|+|V^{\prime}|+2q^{\prime}+2t^{\prime}=2p^{\alpha}$. Now, let us consider the sequence
$$T=(b_1 \bdot b_2 \bdot \dotsc \bdot b_{t^{\prime}}) \bdot U^{\prime } \bdot a_1 \bdot (b_1 \bdot b_2 \bdot \dotsc \bdot b_{t^{\prime}}) \bdot V^{\prime} \bdot a_1 \bdot (a_2\bdot a_2) \bdot (a_3\bdot a_3)\bdot (a_{q^{\prime}}\bdot a_{q^{\prime}})$$ of length $2p^{\alpha}$. Clearly, $\pi(T^{\ast})=1.$ This completes the proof.
 \qed

\section{Results related to $\mathsf{L}(G)$}\label{SectionL(G)}

\no In this section, we will focus on computing the Loewy length of the group $G_i$ for $i\in [1,4]$.

\begin{lem}\label{LemmaG_1} For the group $G=G_1$, we have
$$\mathsf{L}(G)= 2^\alpha +2^\beta +2^{\gamma+1}-3.$$
    
\end{lem}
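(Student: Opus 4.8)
The plan is to read off the entire $M$-series of $G=G_1$ from Propositions \ref{Ind} and \ref{power} and then feed the orders of the Loewy factors into Jennings' formula (Lemma \ref{JenningsTheorem}). First I would record the structural facts: since $c=[a,b]$ is central of order $2^{\gamma}\ge 2$ and $[a,c]=[b,c]=1$, the group is generated by $a$ and $b$, has $[G,G]=\langle c\rangle\neq 1$ and $[[G,G],G]=1$, so it satisfies relation (\ref{2.1}); moreover $|G|=2^{\alpha+\beta+\gamma}$ with the normal form $a^{i}c^{j}b^{k}$. This places Propositions \ref{Ind} and \ref{power} at my disposal.

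Next I would use Proposition \ref{Ind} to see that the $M$-series is constant on each dyadic block, $M_j=G^{2^s}$ for every $j\in[2^{s-1}+1,2^s]$. Consequently $M_i=M_{i+1}$ for every index $i$ that is not a power of two, so the Loewy exponent $e_i$ vanishes there, and the only contributions come from $i=2^s$, where $M_{2^s}/M_{2^s+1}=G^{2^s}/G^{2^{s+1}}$. By Jennings with $p=2$ this reduces the problem to $\mathsf{L}(G)=1+\sum_{s\ge 0}2^s e_{2^s}$, where $2^{e_{2^s}}=|G^{2^s}/G^{2^{s+1}}|$.

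The computational heart is then to evaluate $|G^{2^s}|$. By Proposition \ref{power}, for $s\ge 1$ we have $G^{2^s}=\langle a^{2^s},b^{2^s},c^{2^{s-1}}\rangle$, and using the normal form I would argue this is an internal semidirect product of the three cyclic groups $\langle a^{2^s}\rangle$, $\langle c^{2^{s-1}}\rangle$, $\langle b^{2^s}\rangle$, so that $\log_2|G^{2^s}|=\max(\alpha-s,0)+\max(\beta-s,0)+\max(\gamma-s+1,0)$, while separately $\log_2|G|=\alpha+\beta+\gamma$. Subtracting consecutive values gives $e_1=2$ and, for $s\ge 1$, $e_{2^s}$ equal to the number of the three inequalities $s\le\alpha-1$, $s\le\beta-1$, $s\le\gamma$ that hold. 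Substituting and summing the geometric series $\sum_{s=1}^{\alpha-1}2^s$, $\sum_{s=1}^{\beta-1}2^s$, $\sum_{s=1}^{\gamma}2^s$ yields $\mathsf{L}(G)=1+2+(2^{\alpha}-2)+(2^{\beta}-2)+(2^{\gamma+1}-2)=2^{\alpha}+2^{\beta}+2^{\gamma+1}-3$.

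The main obstacle I anticipate is the order computation $|G^{2^s}|$ together with the boundary term $s=0$: Proposition \ref{power} is valid only for $s\ge 1$ (its $c$-generator would read $c^{2^{-1}}$ at $s=0$), so the naive formula overcounts and would give $e_1=3$ instead of the correct $e_1=2$; treating $G/G^2$ directly, where $G^2=\langle a^2,b^2,c\rangle$ has index $4$, is what repairs this. I also need to check that the three cyclic factors of $G^{2^s}$ meet trivially and that the conjugation relation $[a^{2^s},b^{2^s}]=c^{2^{2s}}$ stays inside $\langle c^{2^{s-1}}\rangle$, so that the orders genuinely multiply; the uniqueness of the normal form $a^{i}c^{j}b^{k}$ makes this routine rather than delicate.
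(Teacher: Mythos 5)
Your proof is correct and follows essentially the same route as the paper: Proposition \ref{Ind} reduces the $M$-series to the power subgroups $G^{2^s}$, Proposition \ref{power} together with the normal form $a^i c^j b^k$ gives the indices $|G^{2^s}:G^{2^{s+1}}|$, and Jennings' formula (Lemma \ref{JenningsTheorem}) is then summed. Your uniform formula $\log_2|G^{2^s}|=\max(\alpha-s,0)+\max(\beta-s,0)+\max(\gamma-s+1,0)$ and your explicit treatment of the boundary term $e_1=2$ (which the paper leaves implicit) merely replace the paper's three-way case split on $\alpha,\beta,\gamma$; the substance is identical.
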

\begin{proof} According to Lemma \ref{JenningsTheorem} and Proposition \ref{Ind}, the quotients  $G^{2^k}/G^{2^{k+1}}$ are elementary abelian $2$-groups, with a rank $r(G^{2^k})$ for all $k \in [1,\alpha-1]$. We claim that $r(G^{2^k}) = 3$ for all $k \in [1,\gamma]$.  
The set of generators of $G^{2^k}$ is $\{a^{2^k}, b^{2^k}, c^{2^{k-1}}\}$, and its proper subsets are
\[
\{a^{2^k}, b^{2^k}\},\quad 
\{a^{2^k}, c^{2^{k-1}}\},\quad
\{b^{2^k}, c^{2^{k-1}}\},\quad
\{a^{2^k}\},\quad
\{b^{2^k}\},\quad
\{c^{2^{k-1}}\}.
\]
Consider $A = \{a^{2^k}, b^{2^k}\}$. We claim that $\langle A \rangle \neq G^{2^k}$.  
If, on the contrary, $\langle A \rangle = G^{2^k}$, then by Proposition~\ref{power} we would have  
$c^{2^{k-1}} \in \langle A \rangle$, which cannot occur because  
$\langle a, c \rangle \cap \langle b \rangle = 1$.  
Thus $\langle A \rangle \neq G^{2^k}$, and an identical argument applies to every proper subset $A$.  
Therefore, none of them generate $G^{2^k}$, and we conclude that $r(G^{2^k}) = 3$.

\no Similarly, using the facts that $\langle a,c \rangle \cap \langle b \rangle = 1$ and $\langle a \rangle \cap \langle c \rangle = 1$, the rank computations for other values of $k$ follow analogously, yielding
 \begin{equation*}
     r(G^{2^k})=
     \begin{cases}
         3 & \text{ if } k \in [1, \gamma], \\ 
         2 & \text{ if } k \in [\gamma+1, \beta-1], \\
         1 & \text{ if } k \in [\beta, \alpha-1].
     \end{cases}
 \end{equation*}
 
\noindent Thus, whenever $\alpha\geq\beta>\gamma$, we have
$$|G^{2^k}:G^{2^{k+1}}|=
\begin{cases}
2^3 &\mbox { if } k \in [1, \gamma], \\
2^2 &\mbox { if } k \in [\gamma +1, \beta -1], \\
2 &\mbox { if } k\in [\beta, \alpha -1],
\end{cases}
$$
\noindent and, for $\alpha>\beta=\gamma$, we obtain
$$|G^{2^k}:G^{2^{k+1}}|=
\begin{cases}
2^3 &\mbox { if } k \in [1, \gamma -1], \\
2^2 &\mbox { if } k=\gamma, \\
2 &\mbox { if } k \in [\gamma +1, \alpha -1].
\end{cases}
$$
\noindent On the other hand, if $\alpha=\beta=\gamma$, then
$$|G^{2^k}:G^{2^{k+1}}|=
\begin{cases}
2^3 &\mbox { if } k \in [1, \gamma -1], \\
2 &\mbox { if } k=\gamma.
\end{cases}
$$
\noindent Since $|M_{2^s}:M_{2^{s}+1}|=|G^{2^s}:G^{2^{s+1}}|$ for any $s\geq 1$, applying Lemma \ref{JenningsTheorem} with
\begin{equation*}
    d=
    \begin{cases}
        2^{\gamma} & \text{ if } \alpha=\beta=\gamma,\\
        2^{\alpha-1} & \text{ otherwise }
    \end{cases}
\end{equation*} implies $\mathsf{L}(G)= 2^\alpha +2^\beta +2^{\gamma+1}-3$. 
\end{proof}

\begin{lem}\label{LemmaG_2} For the group $G=G_2$, we have
$$\mathsf{L}(G)=2^\alpha +2^\beta-1.$$
\end{lem}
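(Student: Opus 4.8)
The plan is to mirror the proof of Lemma \ref{LemmaG_1}, exploiting the special feature of $G=G_2$ that the commutator $[a,b]=a^{2^{\alpha-\gamma}}$ is itself a power of $a$; this collapses the three potential Jennings generators down to two and makes the computation shorter than that for $G_1$. First I would check that $G=\langle a,b\rangle$ satisfies relation (\ref{2.1}). Nontriviality of $[G,G]$ is immediate since $o([a,b])=2^{\gamma}\ge 2$. For $[[G,G],G]=1$ it suffices to see $[a,b]$ is central: it is a power of $a$, and $b^{-1}a^{2^{\alpha-\gamma}}b=a^{2^{\alpha-\gamma}(1+2^{\alpha-\gamma})}=a^{2^{\alpha-\gamma}}$ because $2(\alpha-\gamma)\ge\alpha$ (equivalent to $\alpha\ge 2\gamma$) forces $a^{2^{2(\alpha-\gamma)}}=1$. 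This licenses Propositions \ref{Ind} and \ref{power}.

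Next I would simplify the subgroups $G^{2^s}$. By Proposition \ref{power}, $G^{2^s}=\langle a^{2^s},b^{2^s},[a,b]^{2^{s-1}}\rangle$ for $s\ge 1$, and since $[a,b]^{2^{s-1}}=a^{2^{\alpha-\gamma+s-1}}$ with $\alpha-\gamma-1\ge 0$ (again from $\alpha\ge 2\gamma\ge\gamma+1$, using $\gamma\ge 1$), the commutator term already lies in $\langle a^{2^s}\rangle$. Hence $G^{2^s}=\langle a^{2^s},b^{2^s}\rangle$. Because $\langle a\rangle\trianglelefteq G$ and $\langle a\rangle\cap\langle b\rangle=1$, this is the internal semidirect product $\langle a^{2^s}\rangle\rtimes\langle b^{2^s}\rangle$, so $|G^{2^s}|=2^{(\alpha-s)_+ +(\beta-s)_+}$, where $x_+:=\max\{x,0\}$.

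From these orders I would read off the ranks of the elementary abelian Loewy layers. Writing $m=\min\{\alpha,\beta\}$ and $M=\max\{\alpha,\beta\}$, the index $|G^{2^s}:G^{2^{s+1}}|$ equals $4$ as long as both $a^{2^s}$ and $b^{2^s}$ are nontrivial and drops to $2$ once one of them dies, giving
\[
r(G^{2^s})=
\begin{cases}
2 & \text{if } s\in[0,m-1],\\
1 & \text{if } s\in[m,M-1].
\end{cases}
\]
Finally, invoking $|M_{2^s}:M_{2^s+1}|=|G^{2^s}:G^{2^{s+1}}|=2^{r(G^{2^s})}$ (Proposition \ref{Ind}) together with $M_j=M_{j+1}$ whenever $j$ is not a power of $2$, Lemma \ref{JenningsTheorem} yields
\[
\mathsf{L}(G)=1+\sum_{s=0}^{M-1}2^{s}\,r(G^{2^s})
=1+2\sum_{s=0}^{m-1}2^{s}+\sum_{s=m}^{M-1}2^{s}
=1+(2^{m+1}-2)+(2^{M}-2^{m})
=2^{\alpha}+2^{\beta}-1 .
\]

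The computation itself is routine once the tools are in place, so the delicate points are organizational rather than deep. The genuine obstacle is the reduction $G^{2^s}=\langle a^{2^s},b^{2^s}\rangle$: one must confirm the commutator generator is truly redundant (this is exactly where $\alpha\ge 2\gamma$ is used, both for relation (\ref{2.1}) and for $\alpha-\gamma-1\ge 0$) and that the surviving two generators remain independent in every Loewy layer, which I would justify via normality of $\langle a\rangle$ and $\langle a\rangle\cap\langle b\rangle=1$. The second thing to watch is the boundary bookkeeping: the $s=0$ layer $M_1=G$ contributes $e_1=r(G)=2$ and must be included, and the degenerate case $\alpha=\beta$ (where the range $[m,M-1]$ is empty) must be checked to still give $2^{\alpha}+2^{\beta}-1$; omitting the $s=0$ term would produce an off-by-two error. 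Handling both orderings of $\alpha$ and $\beta$ is costless since the final answer is symmetric, which the $\min/\max$ formulation makes transparent.
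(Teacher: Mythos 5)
Your proposal is correct and follows essentially the same route as the paper: Proposition \ref{Ind} to locate the jumps of the $M$-series at powers of $2$, Proposition \ref{power} together with $[a,b]=a^{2^{\alpha-\gamma}}$ to reduce $G^{2^s}$ to $\langle a^{2^s},b^{2^s}\rangle$, and Jennings' formula (Lemma \ref{JenningsTheorem}) to sum the layer ranks. You simply make explicit several steps the paper leaves implicit (verification of relation (\ref{2.1}), redundancy of the commutator generator, and the symmetric $\min/\max$ bookkeeping), all of which check out.
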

\begin{proof}
Let us consider the case $\alpha\geq\beta.$ Using Proposition \ref{Ind}, we have 
$$M_k=
\begin{cases}
G &\mbox{if } k=1,\\
G^{2^i} &\mbox{if }  k \in [2^{i-1}+1, 2^i] \text{ for all } i \in [1, \alpha-1],\\
1 &\mbox{if }  k \in [2^{\alpha-1}+1, 2^{\alpha}].
\end{cases}
$$
Since $\langle a \rangle\cap \langle b\rangle=1$, the quotient groups $G^{2^k}/G^{2^{k+1}}$ are elementary abelian $2$-groups with
\begin{equation*}
  r(G^{2^k})=
  \begin{cases}
    2 & \text{ if }  k \in [1, \beta-1],\\
    1 & \text{ if }  k\in [\beta, \alpha-1].
  \end{cases}
\end{equation*}
Therefore
$$|G^{2^k}:G^{2^{k+1}}|=
\begin{cases}
2^2 &\mbox { if } k \in [1, \beta -1], \\
2 &\mbox { if } k \in [\beta, \alpha -1].
\end{cases}
$$
\noindent Now, setting $d=2^{\alpha-1}$ in Lemma \ref{JenningsTheorem}, we obtain $\mathsf{L}(G)= 2^\alpha +2^\beta-1$.
The case $\beta\geq \alpha$ is similar.  
\end{proof}

\begin{lem}\label{LemmaG_3} For the group $G=G_3$, we have
$$\mathsf{L}(G)= 2^\alpha +2^\beta+2^{\sigma+1}-3.$$ 
\end{lem}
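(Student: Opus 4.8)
The plan is to follow the template already used for $G_1$ and $G_2$: determine the $M$-series of $G=G_3$ via Propositions \ref{Ind} and \ref{power}, compute the successive quotient orders $|G^{2^k}:G^{2^{k+1}}|$, and then telescope through Jennings' formula (Lemma \ref{JenningsTheorem}). Since $c=a^{-2^{\alpha-\gamma}}[a,b]$ lies in $\langle a,b\rangle$, we have $G=\langle a,b\rangle$, and $G$ satisfies relation (\ref{2.1}); hence Proposition \ref{power} gives $G^{2^k}=\langle a^{2^k},b^{2^k},[a,b]^{2^{k-1}}\rangle$ for $k\ge 1$. Because $a$ and $c$ commute and $[a,b]=a^{2^{\alpha-\gamma}}c$, the commutator power takes the explicit form $[a,b]^{2^{k-1}}=a^{2^{\alpha-\gamma+k-1}}c^{2^{k-1}}$. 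This mixing of an $a$-part and a $c$-part is precisely what distinguishes $G_3$ from $G_1$ and is the source of the extra work.

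First I would record that $G^{2^{k+1}}=(G^{2^k})^2=\Phi(G^{2^k})$ (using $[G,G]\subseteq G^2$ and the identity $(G^{2^i})^{2^j}=G^{2^{i+j}}$ noted after (\ref{2.1})), so that $|G^{2^k}:G^{2^{k+1}}|=2^{r(G^{2^k})}$ and the whole problem reduces to computing the ranks $r(G^{2^k})$. Writing $N=\langle c\rangle\times\langle a\rangle\cong C_{2^\alpha}\times C_{2^\sigma}$, I would factor $|G^{2^k}|=|G^{2^k}\cap N|\cdot|(G/N)^{2^k}|$, where the second factor equals $2^{\max(\beta-k,0)}$ since $G/N\cong C_{2^\beta}$. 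Using that $G$ has nilpotency class two, the commutator and conjugation corrections are central powers of $[a,b]$ (each a power of $[a,b]^{2^{k-1}}$), so $G^{2^k}\cap N=\langle a^{2^k},[a,b]^{2^{k-1}}\rangle$.

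The main obstacle is then the order of the subgroup $\langle a^{2^k},\,a^{2^{\alpha-\gamma+k-1}}c^{2^{k-1}}\rangle\le C_{2^\alpha}\times C_{2^\sigma}$, which I would compute in coordinates, splitting on whether the $c$-part survives. For $1\le k\le\sigma$ the $c$-exponent $2^{k-1}$ is nonzero: projecting to the $c$-coordinate, the image has order $2^{\sigma-k+1}$, while the kernel is $\langle a^{2^{\min(k,\ \alpha-\gamma+\sigma)}}\rangle$. Here the hypothesis $\alpha+\sigma\ge 2\gamma$, together with $\gamma>\sigma\ge k$, forces $\alpha-\gamma+\sigma\ge\gamma>k$, so the minimum is $k$ and $|G^{2^k}\cap N|=2^{\alpha+\sigma-2k+1}$. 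For $k\ge\sigma+1$ the $c$-part dies and, since $\alpha>\gamma$ gives $\alpha-\gamma+k-1\ge k$, the remaining $a$-power is absorbed into $\langle a^{2^k}\rangle$, so $|G^{2^k}\cap N|=2^{\max(\alpha-k,0)}$. Combining with the $G/N$ factor yields $r(G^{2^k})=3$ on $[1,\sigma]$, $r(G^{2^k})=2$ on $[\sigma+1,m-1]$, and $r(G^{2^k})=1$ on $[m,M-1]$, where $m=\min(\alpha,\beta)$ and $M=\max(\alpha,\beta)$; the boundary value $r(G^{2^\sigma})=3$ is confirmed by comparing the two regimes across $k=\sigma$.

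Finally, noting $r(G)=2$ (as $G$ is $2$-generated, with $c\in\Phi(G)=G^2$), so that $e_1=\log_2|G:G^2|=2$, I would substitute these ranks into $\mathsf{L}(G)=1+\sum_{s\ge 0}2^s e_{2^s}$. The sum telescopes exactly as in Lemma \ref{LemmaG_1} with $\gamma$ replaced by $\sigma$, giving $\mathsf{L}(G)=2^\alpha+2^\beta+2^{\sigma+1}-3$. Because the target is symmetric in $\alpha$ and $\beta$, the case $\beta>\alpha$ is handled identically after swapping their roles, and degenerate cases (such as $\min(\alpha,\beta)=\sigma+1$ or $\alpha=\beta$) merely empty out one telescoping range without affecting the outcome. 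The only delicate point is the subgroup-order computation, where the condition $\alpha+\sigma\ge 2\gamma$ is exactly what keeps the rank equal to $3$ all the way up to $k=\sigma$.
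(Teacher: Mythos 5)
Your proposal is correct and follows essentially the same route as the paper: Propositions \ref{Ind} and \ref{power} give $G^{2^k}=\langle a^{2^k},b^{2^k},[a,b]^{2^{k-1}}\rangle$, the hypothesis $\alpha+\sigma\ge 2\gamma>2\sigma$ forces $\alpha>\gamma$ so the $a$-part of $[a,b]^{2^{k-1}}=a^{2^{\alpha-\gamma+k-1}}c^{2^{k-1}}$ is absorbed into $\langle a^{2^k}\rangle$, the quotient orders $|G^{2^k}:G^{2^{k+1}}|$ come out as $2^3,2^2,2$ on $[1,\sigma]$, $[\sigma+1,\min(\alpha,\beta)-1]$, $[\min(\alpha,\beta),\max(\alpha,\beta)-1]$, and Jennings' formula telescopes to $2^\alpha+2^\beta+2^{\sigma+1}-3$. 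The only difference is that you compute $|G^{2^k}|$ explicitly via $|G^{2^k}\cap N|$ and the projection to $G/N$, whereas the paper simply rewrites $G^{2^k}=\langle a^{2^k},b^{2^k},c^{2^{k-1}}\rangle$ and cites the $G_1$ computation; your version supplies the details the paper leaves to the reader.
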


\begin{proof}
Assume that $\alpha\geq\beta.$ The relation $[a,b]=a^{2^{\alpha-\gamma}} c$\; implies that $G^{2^k}= \langle a^{2^k}, b^{2^k}, c^{2^{k-1}} \rangle$ for all $k\geq 1.$ Similar to Lemma \ref{LemmaG_1}, we have
$$|G^{2^k}:G^{2^{k+1}}|=
\begin{cases}
2^3 &\mbox { if } k \in [1, \sigma], \\
2^2 &\mbox { if } k \in [\sigma +1, \beta -1], \\
2 &\mbox { if } k \in [\beta, \alpha -1].
\end{cases}
$$
Using Proposition \ref{Ind}, and setting $d=2^{\alpha-1}$ in Lemma \ref{JenningsTheorem}, we find $\mathsf{L}(G)= 2^\alpha +2^\beta+2^{\sigma+1}-3.$ Similarly, the case $\beta<\alpha$ follows.
\end{proof}

\begin{lem}\label{LemmaG_4} For the group $G=G_4$, we have
$$\mathsf{L}(G)= 2^{\gamma+2}-3.$$
\end{lem}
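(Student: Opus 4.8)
The plan is to follow the same strategy as in Lemmas \ref{LemmaG_1}--\ref{LemmaG_3}: determine the subgroups $G^{2^s}$ via Proposition \ref{power}, read off the ranks $r(G^{2^s})$ (equivalently the indices $|G^{2^s}:G^{2^{s+1}}|$), and feed these into Jennings' formula (Lemma \ref{JenningsTheorem}). First I would record the basic structure of $G=G_4$. Since $c=a^{-2}[a,b]$, the group is generated by $a$ and $b$; the defining relations give $[[a,b],a]=[[a,b],b]=1$ (here one uses $[c,b]=[a,b]^{-2}$, which follows from $[a,b]=a^2c$ and $[a,c]=1$), so $G$ has nilpotency class $2$, satisfies relation (\ref{2.1}), and $[G,G]=\langle[a,b]\rangle$ is central cyclic of order $2^\gamma$. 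Consequently Propositions \ref{Ind} and \ref{power} apply, and in particular $G^{2^s}=\langle a^{2^s},b^{2^s},[a,b]^{2^{s-1}}\rangle$ for all $s\ge 1$.

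The decisive relation comes from $o(c)=2^{\gamma-1}$: since $c=a^{-2}[a,b]$ with $[a,b]$ central and commuting with $a$, the identity $c^{2^{\gamma-1}}=1$ yields $a^{2^\gamma}=[a,b]^{2^{\gamma-1}}$. Combining this with the defining relation $a^{2^\gamma}=b^{2^\gamma}$, I would set $w:=a^{2^\gamma}=b^{2^\gamma}=[a,b]^{2^{\gamma-1}}$, a central involution. Working in the normal form $[a,b]^k a^i b^j$ (with $a,b,[a,b]$ commuting up to central powers of $[a,b]$, and the reduction $a^{2^\gamma}=b^{2^\gamma}=w$) one checks $|G|=2^{3\gamma}$.

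The heart of the computation is the order of $G^{2^s}$. For $1\le s\le\gamma$, each of the three generators $a^{2^s},b^{2^s},[a,b]^{2^{s-1}}$ has order $2^{\gamma+1-s}$, and the three cyclic subgroups they generate meet pairwise exactly in $\langle w\rangle$, since $\big(a^{2^s}\big)^{2^{\gamma-s}}=\big(b^{2^s}\big)^{2^{\gamma-s}}=\big([a,b]^{2^{s-1}}\big)^{2^{\gamma-s}}=w$. Counting elements in the normal form --- where the coupling between the $a$- and $[a,b]$-exponents is governed precisely by $a^{2^\gamma}=[a,b]^{2^{\gamma-1}}$ --- yields $|G^{2^s}|=2^{3(\gamma-s)+1}$ for $1\le s\le\gamma$, together with $G^{2^\gamma}=\langle w\rangle$ and $G^{2^{\gamma+1}}=1$. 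Hence $d=2^\gamma$, and
\[
|G^{2^s}:G^{2^{s+1}}|=
\begin{cases}
2^2 & \text{if } s=0,\\
2^3 & \text{if } s\in[1,\gamma-1],\\
2 & \text{if } s=\gamma,
\end{cases}
\]
so the ranks $r(G^{2^s})$ run through $2,3,3,\dots,3,1$. Feeding $|M_{2^s}:M_{2^s+1}|=|G^{2^s}:G^{2^{s+1}}|$ (Proposition \ref{Ind}) into Lemma \ref{JenningsTheorem} gives
\[
\mathsf{L}(G)=1+\sum_{s=0}^{\gamma}2^s\,r(G^{2^s})=1+2+3(2^\gamma-2)+2^\gamma=2^{\gamma+2}-3,
\]
which is the claimed value (the cases $\gamma=1,2$ being covered by reading the middle range as empty or a single index).

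The main obstacle is exactly the collapse at the top of the $M$-series. Unlike $G_1$ and $G_3$, where the ranks descend gently as $3,2,1$, here the rank drops from $3$ straight to $1$ at $s=\gamma$, because $a^{2^\gamma}$, $b^{2^\gamma}$ and $[a,b]^{2^{\gamma-1}}$ all coincide with the single involution $w$. The care needed is to verify that at the penultimate level the three generators $a^{2^{\gamma-1}},b^{2^{\gamma-1}},[a,b]^{2^{\gamma-2}}$ (each of order $4$ with common square $w$) are still independent modulo $G^{2^\gamma}$, i.e.\ $r(G^{2^{\gamma-1}})=3$, and that they then fuse to rank $1$; this is precisely what the explicit count $|G^{2^{\gamma-1}}|=16$, $|G^{2^\gamma}|=2$ confirms. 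Establishing the order formula $|G^{2^s}|=2^{3(\gamma-s)+1}$ rigorously --- tracking how the relation $a^{2^\gamma}=[a,b]^{2^{\gamma-1}}$ couples exponents in the normal form --- is the one genuinely delicate point; everything else is the routine bookkeeping of Jennings' theorem.
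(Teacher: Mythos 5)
Your proposal is correct and follows essentially the same route as the paper: use Proposition \ref{power} to identify $G^{2^s}$, establish $|G^{2^s}|=2^{3(\gamma-s)+1}$ for $s\in[1,\gamma]$, read off the indices $|G^{2^s}:G^{2^{s+1}}|$ (namely $2^2,2^3,\dots,2^3,2$), and feed them into Jennings' formula via Proposition \ref{Ind}. The only difference is cosmetic: where you count elements in a normal form using $a^{2^\gamma}=b^{2^\gamma}=[a,b]^{2^{\gamma-1}}$, the paper computes $|G^{2^k}|$ by determining the order of $\overline{b^{2^k}}$ in the quotient $G^{2^k}/\langle a^{2^k},c^{2^{k-1}}\rangle$; both yield the same orders and the same value $\mathsf{L}(G)=2^{\gamma+2}-3$.
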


\begin{proof}
 Using Proposition \ref{power} and the relation  $[a,b]=a^{2} c$, we have $G^{2^k}=\langle a^{2^k}, b^{2^k}, c^{2^{k-1}} \rangle$ for all $k \in [1,\gamma].$ Set $x=a^{2^k}, y=b^{2^k},\;\text{and}\; z=c^{2^{k-1}}.$ Then we have the following relations:
\[[x,y]=[y,z]=(xz)^{2^{k+1}}, \quad [x,z]=1, \quad x^{2^{\gamma-k}}=y^{2^{\gamma-k}}.\]
\noindent Consider the quotient group $\overline{G^{2^k}}=\frac{G^{2^k}}{\langle x,z\rangle}.$  
 Observe that  $\overline{G^{2^k}}\cong \langle \overline{y} \rangle$. We will first show that $o(\overline{y})=2^{\gamma-k}$. Suppose that $\overline{y}^{2^l}=\overline{1}$ for some $l<\gamma-k.$ This implies $y^{2^l}\in \langle x,z\rangle$ and hence $y^{2^l}\in \langle a,c\rangle.$ As $y=b^{2^k}$, we get $b^{2^{k+l}} \in \langle a,c\rangle.$ This implies $\overline{b}^{2^{k+l}}=\overline{1}$ in $\frac{G}{\langle a,c\rangle}.$ But  $|G|=2^{3 \gamma},$ implies that $|\frac{G}{\langle a,c\rangle}|=o(\overline{b})=2^\gamma.$ So, the claim follows. Therefore, 
  $|G^{2^k}|=2^{3(\gamma-k)+1}$ for all $k \in [1, \gamma].$
 
\no Hence
$$|G^{2^k}:G^{2^{k+1}}|=
\begin{cases}
2^3 &\mbox { if } k\in [1, \gamma -1], \\
2 &\mbox { if } k=\gamma.
\end{cases}
$$
\noindent Now, setting $d=2^{\gamma}$ in Lemma \ref{JenningsTheorem}, we obtain $\mathsf{L}(G)= 2^{\gamma+2}-3.$ 
\end{proof}

\section{Proof of Theorem \ref{TheoremOrderedLoewy} and Theorem \ref{LemmaIndexp}}

\subsection*{Proof of Theorem \ref{TheoremOrderedLoewy}}
\begin{enumerate}[label={(\arabic*)}]

\item Let $G=G_1.$ By Lemma \ref{Dimitovtheorem} and Lemma \ref{LemmaG_1}, we have $\mathsf{D}_o(G) \le 2^\alpha +2^\beta +2^{\gamma+1}-3$. To prove the reverse inequality, we consider 
the ordered sequence $$S^{\ast}=(a^{-1})^{[2^{\alpha}-1]}\bdot  (b^{-1})^{[2^{\beta}-1]}\bdot  (a[a,b])^{[2^{\gamma}-1]}\bdot  (b[a,b])^{[2^{\gamma}-1]}$$ over $G$ of length $|S^{\ast}| = 2^{\alpha}+2^{\beta}+ 2^{\gamma+1}-4$. If possible, let us assume that $S^{\ast}$ has a non-trivial product-one ordered subsequence. Then
\begin{equation}\label{LBEQ1}
       (a^{-1})^{x}(b^{-1})^{y}(a[a,b])^z(b[a,b])^w=1 
    \end{equation} 
with $x \in [0,(2^{\alpha}-1)], \; y \in [0,(2^{\beta}-1)], \; z,w\in [0, (2^{\gamma}-1)],$ with not all zero.
Equation $(\ref{LBEQ1})$ results in $a^{-x+z}b^{-y+w}[a,b]^{z+w+yz}=1$. So, we have the following system of equations
 \begin{align*}
  -x+z &\equiv 0\ (\textrm{mod}\ 2^{\alpha})\\ 
  -y+w & \equiv 0\ (\textrm{mod}\ 2^{\beta}) \\ 
z+w+yz&\equiv 0\ (\textrm{mod}\ 2^{\gamma}).
\end{align*}
If $\gamma=1$, then the trivial solution is the only solution to the above system of equations, providing a contradiction. Thus, $ \mathsf{D}_o(G) \ge 2^\alpha +2^\beta +2^{\gamma+1}-3.$

\item Let $G=G_2.$ From Lemma \ref{Dimitovtheorem} and Lemma \ref{LemmaG_2}, we have $\mathsf{D}_o(G) \le 2^\alpha +2^\beta-1$. To prove the reverse inequality, we consider the ordered sequence $S^{\ast}= a^{[2^{\alpha}-1]}\bdot b^{[2^{\beta}-1]}$ over $G$ of length $|S^{\ast}|=2^{\alpha}+2^{\beta}-2$. Since $\langle a \rangle \cap \langle b \rangle =1$, the ordered sequence $S^{\ast}$ has no non-trivial product-one ordered subsequence. Hence, $\mathsf{D}_o(G) \ge 2^{\alpha}+2^{\beta}-1 .$

    \item Let $G=G_3.$ Lemma \ref{Dimitovtheorem} and Lemma \ref{LemmaG_3} implies that $\mathsf{D}_o(G) \le 2^\alpha +2^\beta+2^{\sigma+1}-3$.
 For the reverse inequality, consider the ordered sequence $$S^{\ast}= (a^{-1})^{[2^{\alpha}-1]} \bdot  (b^{-1})^{[2^{\beta}-1]} \bdot  (a[a,b])^{[2^{\sigma}-1]} \bdot (b[a,b])^{[2^{\sigma}-1]}$$ over $G$ of length $|S^{\ast}|=2^{\alpha}+2^{\beta}+ 2^{\sigma+1}-4$. Suppose $S^{\ast}$ has a non-trivial product-one ordered subsequence. In this case, we have
    \begin{equation}\label{LBEQ3}
       (a^{-1})^{x}(b^{-1})^{y}(a[a,b])^z(b[a,b])^w=1 
    \end{equation} for $x \in [0, (2^{\alpha}-1)],\; y\in [0, (2^{\beta}-1)], \; z,w \in [0, (2^{\sigma}-1)],$ with not all zero. Equation $(\ref{LBEQ3})$ leads to $a^{-x+z}b^{-y+w}[a,b]^{z+w+yz}=1.$

\noindent Let us consider $\overline{G}:=\frac{G}{\langle a,c\rangle}$, then $(\overline{b})^{-y+w}=\overline{1}$. Since $\overline{G}\cong C_{2^{\beta}}$, we have $-y+w \equiv 0\ (\textrm{mod}\ 2^{\beta}).$ Thus, the above equation simplifies to $a^{-x+z}[a,b]^{z+w+yz}=1$. Using the relation $[a,b]=a^{2^{\alpha-\gamma}} c$, we obtain following system of equations
 \begin{align*}
  -x+z+2^{\alpha-\gamma}(z+w+yz) &\equiv 0\ (\textrm{mod}\ 2^{\alpha})\\ 
  -y+w & \equiv 0\ (\textrm{mod}\ 2^{\beta}) \\ 
z+w+yz&\equiv 0\ (\textrm{mod}\ 2^{\sigma}).
\end{align*}
If $\sigma=1$, the only solution to the given system of equations is the trivial solution, which leads to a contradiction. This yields $ \mathsf{D}_o(G) \ge 2^\alpha +2^\beta+2^{\sigma+1}-3 .$
    \item Let $G=G_4.$ Using Lemma \ref{Dimitovtheorem} and Lemma \ref{LemmaG_4}, we have $\mathsf{D}_o(G) \le 2^{\gamma+2}-3.$ For the reverse inequality, we consider the ordered sequence $$S^{\ast}= (a^{-1})^{[2^{\gamma+1}-1]}\bdot (b^{-1})^{[2^{\gamma}-1]} \bdot (a[a,b])^{[2^{\gamma-1}-1]}\bdot (b[a,b])^{[2^{\gamma-1}-1]}$$ over $G$ of length $|S^{\ast}|=2^{\gamma+2}-4.$ Suppose $S^{\ast}$ has a non-trivial product-one ordered subsequence. Therefore,
\begin{equation}\label{LBEQ4}
     (a^{-1})^{x}(b^{-1})^{y}(a[a,b])^z(b[a,b])^w=1
     \end{equation} for $x \in [0, (2^{\gamma+1}-1)], \; y \in [0, (2^{\gamma}-1)], \; z,w\in [0, (2^{\gamma-1}-1)],$ with not all zero. Equation $(\ref{LBEQ4})$ leads to $a^{-x+z}b^{-y+w}[a,b]^{z+w+yz}=1.$

Let us consider $\overline{G}:=\frac{G}{\langle a,c\rangle}$, then we have $-y+w \equiv 0\ (\textrm{mod}\ 2^{\gamma})$. Thus, the above equation reduces to $a^{-x+z}[a,b]^{z+w+yz}=1$, using the  given relation $[a,b]=a^2 c$, we have following system of equations
 \begin{align*}
  -x+z+2(z+w+yz) &\equiv 0\ (\textrm{mod}\ 2^{\gamma  +1})\\ 
  -y+w & \equiv 0\ (\textrm{mod}\ 2^{\gamma}) \\ 
z+w+yz&\equiv 0\ (\textrm{mod}\ 2^{\gamma-1}).
\end{align*}
If $\gamma \in \{1,2\}$, then the system admits only the trivial solution, which gives a contradiction. Therefore, $\mathsf{D}_o(G) \le 2^{\gamma+2}-3.$ \qed

\end{enumerate}

\no \textbf{Proof of Corollary \ref{d}}

\no The sequence $S=a^{[2^{\alpha}-1]}\bdot b^{[2^{\beta}-1]}$ over $G$ is a product-one free sequence, by Lemma \ref{d(G)D_o(G)}, we conclude $2^\alpha +2^\beta-2 = \mathsf{d}(G)$. \qed
 
\subsection*{Proof of Theorem \ref{LemmaIndexp}}
Assume that $G$ has a cyclic subgroup of index $p$. By Lemma \ref{EqL}, we have $\mathsf{L}(G)=\frac{|G|}{p}+p-1$. Using Lemma \ref{dd}, Lemma \ref{d(G)D_o(G)}, and Lemma \ref{Dimitovtheorem}, we conclude 
$$\mathsf{d}(G)+1=\mathsf{D}_o(G)= \mathsf{L}(G)=\frac{|G|}{p}+p-1.$$
Conversely, since $\mathsf{L}(G)=\frac{|G|}{p}+p-1,$ so we have the result using Lemma \ref{EqL}. \qed

\section{Concluding Remarks}
In this paper, we concentrated on several zero-sum invariants and explored their interrelationships. In Theorem \ref{TheoremE(G)}, we identify a class of non-abelian groups of the form $G = A \rtimes_{-1} C_2$, where $A$ is a finite abelian $p$-group, for which Conjecture \ref{GAOLI} holds. This naturally leads to the question of whether Conjecture \ref{GAOLI} continues to hold when the class is extended to include all finite groups of the form $G = A \rtimes C_q$, where $q$ is a prime and $A$ is a finite abelian group. 
\section{Acknowledgements}
We thank the anonymous referee for the careful reading of the manuscript and for the helpful comments that improved its overall exposition. The second author acknowledges support from the Alexander von Humboldt Foundation through a Humboldt Research Fellowship for Postdocs (Humboldt ID number 1243434), and the third author from the ANRF Core Research Grant (File No. CRG/2023/002698).

\bigskip

\end{document}